\numberwithin{equation}{section}
\theoremstyle{plain}
\newtheorem{theorem}{Theorem}[section]
\newtheorem{definition}[theorem]{Definition}
\newtheorem{lemma}[theorem]{Lemma}
\newtheorem{proposition}[theorem]{Proposition}
\newtheorem{eg}[theorem]{Example}
\numberwithin{equation}{section}
\theoremstyle{remark}
\newtheorem{remark}[theorem]{Remark}
\DeclareMathOperator{\Hom}{\operatorname{Hom}}	\DeclareMathOperator{\End}{\operatorname{End}}
\DeclareMathOperator{\diag}{\operatorname{diag}}	
\DeclareMathOperator{\Rep}{\operatorname{Rep}}
\newcommand{\coker}{\operatorname{coker}}
\DeclareMathOperator{\Ext}{\operatorname{Ext}}
\DeclareMathOperator{\GL}{\operatorname{GL}}
\DeclareMathOperator{\Spec}{\operatorname{Spec}}
\DeclareMathOperator{\Coh}{\operatorname{Coh}}
\DeclareMathOperator{\Ind}{\operatorname{Ind}}
\DeclareMathOperator{\cInd}{\operatorname{c-Ind}}
\DeclareMathOperator{\Perf}{\operatorname{Perf}}
\DeclareMathOperator{\QCoh}{\operatorname{QCoh}}
\DeclareMathOperator{\Bun}{\operatorname{Bun}}
\DeclareMathOperator{\qc}{\operatorname{qc}}
\DeclareMathOperator{\Nilp}{\operatorname{Nilp}}
\DeclareMathOperator{\basic}{\operatorname{basic}}
\DeclareMathOperator{\lis}{\operatorname{lis}}
\DeclareMathOperator{\sss}{\operatorname{ss}}
\DeclareMathOperator{\Sing}{\operatorname{Sing}}
\DeclareMathOperator{\Modr}{\operatorname{-Mod}}
\newcommand{\Fr}{\operatorname{Fr}}
\newcommand{\Cont}{\operatorname{Cont}}
\renewcommand{\Modr}{\operatorname{Mod-}}
\newcommand{\Modl}{\operatorname{-Mod}}
\begin{document}

\authornew{Chenji Fu}
\geburtsdatum{15th July 1998}
\geburtsort{Jiaxing, China}
\date{August 15, 2023}

\betreuer{Advisor: Prof. Dr. Peter Scholze}
\zweitgutachter{Second Advisor: Prof. Dr. Jessica Fintzen}

\institut{Mathematisches Institut}
\title{\Large On the categorical local Langlands conjectures for depth-zero regular supercuspidal representations}
\ausarbeitungstyp{Master's Thesis  Mathematics}

\maketitle
\tableofcontents


\chapter{Introduction}

Let $F$ be a non-archimedean local field, of residue characteristic $p > 1$, with residue field  $\mathbb{F}_q$. Let $G$ be a connected reductive group over $F$. For simplicity, we assume that $G$ is split, semisimple, and simply connected. Let $\ell$ be a prime different from $p$. Let $\overline{\mathbb{Q}}_{\ell}$ be the algbebraic closure of the $\ell$-adic numbers $\mathbb{Q}_{\ell}$. Let $\Lambda=\overline{\mathbb{Z}}_{\ell}$ be the integral closure of $\mathbb{Z}_{\ell}$ in $\overline{\mathbb{Q}}_{\ell}$. Let $W_F$ be the Weil group of $F$ and $\hat{G}$ the Langlands dual group of $G$, considered as a split algebraic group over $\Lambda$. The categorical local Langlands conjecture predicts that there is a fully faithful embedding
$$\Rep_{\Lambda}(G(F)) \longrightarrow \QCoh(Z^1(W_F, \hat{G})_{\Lambda}/\hat{G})$$
from the category of smooth representations of the $p$-adic group $G(F)$ to the category of quasi-coherent sheaves on the stack of Langlands parameters. In this paper, we compute the two sides explicitly for depth-zero regular supercuspidal representations of the group $G$ and verify the categorical local Langlands conjecture for depth-zero supercuspidal part of $\GL_n$.\footnote{We will see that although $\GL_n$ is not simply connected, the theory still works without much change. Also, we do not need to assume that $G$ is simply connected for the results on the $L$-parameter side.}

Fixing an irreducible depth-zero regular supercuspidal representation $\pi \in \Rep_{\overline{\mathbb{F}}_{\ell}}(G(F))$,\footnote{Note that we start with a representation with $\overline{\mathbb{F}}_{\ell}$-coefficients instead of $\overline{\mathbb{Q}}_{\ell}$-coefficients, because we are interested in describing the ($\overline{\mathbb{Z}}_{\ell}$-)block of 
$\Rep_{\Lambda}(G(F))$.} the (classical) local Langlands conjecture predicts that it should correspond to a tame, regular semisimple, elliptic $L$-parameter (TRSELP for short, Definition \ref{Def TRSELP}) 
$$\varphi \in Z^1(W_F, \hat{G}(\overline{\mathbb{F}}_{\ell}))$$
(see also \cite{debacker2009depth}, but here we replace the coefficient $\mathbb{C}$ with $\overline{\mathbb{F}}_{\ell}$). Let $X_{\varphi}$ be the connected component of $Z^1(W_F, \hat{G})_{\Lambda}$ containing $\varphi$. As mentioned above, this paper focuses on the depth-zero regular supercuspidal part of the categorical local Langlands conjecture, which predicts a fully faithful embedding
$$\Rep_{\Lambda}(G(F))_{[\pi]} \longrightarrow \QCoh(X_{\varphi}/\hat{G})$$
from the block of $\Rep_{\Lambda}(G(F))$ containing $\pi$ to the category of quasi-coherent sheaves on the connected component $X_{\varphi}/\hat{G}$ of the stack of $L$-parameters $Z^1(W_F, \hat{G})_{\Lambda}/\hat{G}$ containing $\varphi$.

\section{$L$-parameter side}
Let $G$ be a connected split reductive group over $F$. Let $\varphi \in Z^1(W_F, \hat{G}(\overline{\mathbb{F}}_{\ell}))$ be a TRSELP (see Definition \ref{Def TRSELP}). In this section, we explain Chapter \ref{Chapter MoLP} on how to compute $\QCoh(X_{\varphi}/\hat{G})$. 

This is done by describing $X_{\varphi}/\hat{G}$ explicitly as a quotient stack over $\Lambda=\overline{\mathbb{Z}}_{\ell}$. 

\subsection{Heuristics on the component $X_{\varphi}/\hat{G}$}

In this subsection, we describe some heuristics on the component $X_{\varphi}/\hat{G}$ which help us to guess what this component should look like.

First, let us recall what is known over $\overline{\mathbb{Q}}_{\ell}$ instead of $\Lambda=\overline{\mathbb{Z}}_{\ell}$. Indeed, assuming that the center $Z(\hat{G})$ of $\hat{G}$ is finite, the connected component of the stack of $L$-parameters $Z^1(W_F, \hat{G})_{\overline{\mathbb{Q}}_{\ell}}/\hat{G}$ over $\overline{\mathbb{Q}}_{\ell}$ containing an elliptic $L$-parameter $\varphi'$ is known to be one point. More precisely, it is isomorphic to the quotient stack $[*/S_{\varphi'}]$, where $S_{\varphi'}=C_{\hat{G}}(\varphi')$ is the centralizer of $\varphi'$ (see \cite[Section X.2]{fargues2021geometrization}).

Second, let us explain the difference between the geometry of the connected components of the stack of $L$-parameters over $\overline{\mathbb{Q}}_{\ell}$ and $\overline{\mathbb{Z}}_{\ell}$. This can be seen from the example $G=\GL_1$. Indeed, 
$$Z^1(W_F, \widehat{\GL_1}) \cong \mu_{q-1} \times \mathbb{G}_m,$$
both over $\overline{\mathbb{Q}}_{\ell}$ and $\overline{\mathbb{Z}}_{\ell}$ (see Example \ref{Example GL_1}). However, $\mu_{q-1}$ is just $q-1$ discrete points over $\overline{\mathbb{Q}}_{\ell}$, while
the connected components of $\mu_{q-1}$ are isomorphic to $\mu_{\ell^k}$ (over $\overline{\mathbb{F}}_{\ell}$, hence also) over $\overline{\mathbb{Z}}_{\ell}$, where $k$ is the maximal integer such that $\ell^k$ divides $q-1$. So when describing the connected components of the stack of $L$-parameters over $\overline{\mathbb{Z}}_{\ell}$, there will be possibly some non-reduced part $\mu$ appearing.

These two features come together in the description of $X_{\varphi}/\hat{G}$, the connected component of $Z^1(W_F, \hat{G})/\hat{G}$ containing $\varphi$. Under mild assumptions, we prove that
$$X_{\varphi}/\hat{G} \cong [*/S_{\varphi}]\times \mu,$$
where $S_{\varphi}=C_{\hat{G}}(\varphi)$ and $\mu$ is some product of $\mu_{\ell^{k_i}}$'s. 

More precisely, to compute the connected component $X_{\varphi}$ of $Z^1(W_F, \hat{G})_{\overline{\mathbb{Z}}_{\ell}}$, we need to choose a lift $\psi$ of $\varphi$ over $\overline{\mathbb{Z}}_{\ell}$. We can now state the main theorem of Chapter \ref{Chapter MoLP} as follows (see Theorem \ref{Thm X/G}).

\begin{theorem}
	Assume that $Z(\hat{G})$ is finite. Let $\varphi \in Z^1(W_F, \hat{G}(\overline{\mathbb{F}}_{\ell}))$ be a TRSELP. Let $\psi \in Z^1(W_F, \hat{G}(\overline{\mathbb{Z}}_{\ell}))$ be a lift of $\varphi$.
	Let $X_{\varphi}$ be the connected component of $Z^1(W_F, \hat{G})_{\overline{\mathbb{Z}}_{\ell}}$ containing $\varphi$. Then we have an isomorphism of quotient stacks over $\overline{\mathbb{Z}}_{\ell}$
	\begin{equation}\label{Equation: X_phi}
		X_{\varphi}/\hat{G} \cong [*/S_{\psi}] \times \mu,
	\end{equation}
	where $S_{\psi}:=C_{\hat{G}}(\psi)$ is the schematic centralizer of $\psi$ in $\hat{G}$. 
\end{theorem}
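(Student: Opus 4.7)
The plan is to realize $X_{\varphi}$ explicitly as a scheme and then compute the quotient by $\hat{G}$.

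First, since $\varphi$ is tame and wild inertia $P_F$ is pro-$p$ (while $p \neq \ell$), standard continuity arguments show that any parameter in a small enough neighborhood of $\psi$ must also be trivial on $P_F$. Hence $X_{\varphi}$ already lies in the tame locus $Z^1(W_F^{\mathrm{tame}}, \hat{G})_{\Lambda}$, which, after choosing topological generators $\sigma$ (a Frobenius lift) and $\tau$ (a generator of tame inertia), is cut out in $\hat{G}^2$ by the single relation $g_{\sigma} g_{\tau} g_{\sigma}^{-1} = g_{\tau}^{q}$.

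Second, I would use the regular semisimplicity of $\psi(\tau)$ to build an étale slice. The element $\psi(\tau)$ sits in a unique maximal torus $T = C_{\hat{G}}(\psi(\tau))^{\circ}$, and $\psi(\sigma)$ automatically normalizes $T$, projecting to a Weyl element $w \in W := N_{\hat{G}}(T)/T$. A neighborhood of $\psi$ in $Z^1$ then takes the form $\hat{G} \times^{N(T)} Y$, where $Y$ is the slice of pairs $(g_{\sigma}, g_{\tau}) \in N(T) \times T$ with $g_{\tau}$ regular, $g_{\sigma}$ projecting to $w$, and $g_{\sigma} g_{\tau} g_{\sigma}^{-1} = g_{\tau}^{q}$. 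Writing $g_{\sigma} = n_0 s$ for a fixed lift $n_0$ of $w$ and $s \in T$, the commutation relation collapses (as $T$ is abelian) to $w(g_{\tau}) = g_{\tau}^{q}$, i.e.\ $g_{\tau} \in K := \ker(f)$, where $f : T \to T$ is defined by $f(t) = w(t) \cdot t^{-q}$. Thus $Y \cong T \times K$.

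Third, I extract the $\mu$-factor by analyzing the connected component. Because $T$ is connected, the components of $T \times K$ are $T \times (\text{components of } K)$; the component of $\psi$ corresponds to $T \times \psi(\tau) \cdot K^{\circ}$, where $K^{\circ}$ is the identity component of $K$. As $K^{\circ}$ is a finite connected commutative group scheme of multiplicative type over $\Lambda$, Cartier duality on its character group identifies it with a product $K^{\circ} \cong \prod_{i} \mu_{\ell^{k_i}} =: \mu$. Note that $K^{\circ} \otimes \overline{\mathbb{Q}}_{\ell}$ is trivial, since for elliptic $\varphi$ the endomorphism $1 - q\cdot w^{-1}$ is invertible on $X_{*}(T) \otimes \overline{\mathbb{Q}}_{\ell}$; this matches the expected one-point picture over $\overline{\mathbb{Q}}_{\ell}$ and localizes the non-reduced contributions to $\ell$-power torsion.

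Finally, I would take the quotient by $\hat{G}$, which reduces to a quotient of $Y$ by the appropriate subgroup of $N(T)$. Conjugation by $t_0 \in T$ fixes $g_{\tau}$ and sends $s \mapsto (w^{-1}(t_0) t_0^{-1}) \cdot s$. For elliptic $w$ the Lang-type map $(1 - w^{-1}) : T \to T$ is an isogeny with kernel $T^{w} = S_{\psi}$, so $[T/T] \cong [*/S_{\psi}]$ and hence $[T \times K^{\circ} / T] \cong [*/S_{\psi}] \times K^{\circ}$. It then remains to quotient by the residual $N_W(w)/\langle w \rangle$-action and check that it acts trivially on $K^{\circ}$ and on the gerbe, which I expect to follow from the regularity of $\psi$ by comparing Weyl elements acting on a regular torus element with their induced action on the kernel $K$. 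The main obstacle will be this last step: simultaneously tracking the residual Weyl action, the non-reduced structure of $\mu$, and the gerbe $BS_{\psi}$ at the schematic level over $\Lambda$, rather than working merely with $\overline{\mathbb{F}}_{\ell}$- or $\overline{\mathbb{Q}}_{\ell}$-points where the component is either stacky-but-reduced or split reduced.
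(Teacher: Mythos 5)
Your proposal takes a genuinely different route from the paper: instead of invoking the Dat/DHKM machinery for moduli of $L$-parameters (in particular the formula of \cite[Subsection 4.6]{dat2022ihes}), you work directly with the closed subscheme of $\hat{G}^2$ cut out by the tame relation and try to build an explicit slice through $\psi$. The shape of the answer you reach --- a slice $T\times K$ with $K=\ker\bigl(t\mapsto w(t)\,t^{-q}\bigr)$, the factor $\mu=K^\circ$ of $\ell$-power roots of unity, the Lang-type map $T\to T$ with kernel $S_\psi$ reducing $[T/T]$ to $[*/S_\psi]$ --- matches the paper's computation. But two points your proposal leaves open are exactly what the paper's use of $\psi_\ell$ (the restriction of $\psi$ to the \emph{prime-to-$\ell$} inertia $I_F^\ell$, not to a single tame generator $\tau$) is designed to settle.

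First, the identification $T=C_{\hat G}(\psi(\tau))^\circ$: over $\overline{\mathbb{Z}}_\ell$ the element $\psi(\tau)$ may have order divisible by $\ell$, in which case the schematic centralizer $C_{\hat G}(\psi(\tau))$ is not \emph{a priori} smooth or generalized reductive, so you cannot freely take identity components or run an \'etale-slice argument over the base $\overline{\mathbb{Z}}_\ell$ rather than merely over the two residue fields. The paper replaces $\psi(\tau)$ by $\psi_\ell$, whose image has prime-to-$\ell$ order; then $C_{\hat G}(\psi_\ell)$ is generalized reductive (Lemma \ref{Lem gen red}, via \cite[Lemma 3.2]{dat2022ihes}) and split over $\overline{\mathbb{Z}}_\ell$, and only afterwards is it shown to coincide with $C_{\hat G}(\psi|_{I_F})$ (Lemma \ref{Lemma: T}). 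This is what makes the slice integral rather than merely formal.

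Second, the residual Weyl action you flag as your ``main obstacle'' is not something the paper has to prove vanishes --- it is killed structurally by the choice of slice. In $\bigl(\hat G\times Z^1(W_F,N_{\hat G}(\psi_\ell))_{\psi_\ell,\overline\psi}\bigr)/C_{\hat G}(\psi_\ell)_{\overline\psi}$, the slice condition fixes the restriction to $I_F^\ell$ \emph{equal to} $\psi_\ell$ (not just conjugate into $T$) and fixes the image $\overline\psi$ in $\pi_0(N_{\hat G}(\psi_\ell))$; the stabilizer of that rigid datum is exactly $C_{\hat G}(\psi_\ell)_{\overline\psi}=T$, because the connected $T$ necessarily fixes $\overline\psi$ (Lemma \ref{Lemma: T}). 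Your looser condition ``$g_\tau\in T$ regular, $g_\sigma\mapsto w$'' is stable under the preimage in $N(T)$ of $C_W(w)$, which is strictly larger than $T$; when you then pass to the connected component of $\psi$, you would need to argue that this extra $C_W(w)$-part acts by permuting the other components of $K$ nontrivially (plausible, using regularity of the prime-to-$\ell$ part of $\psi(\tau)$), leaving only $T$ acting on $Y''$. You correctly identify that this is not done. Both points are likely repairable, but repairing them effectively reintroduces $\psi_\ell$ as the pivot, at which stage you would have re-derived \cite[Subsection 4.6]{dat2022ihes} by hand.
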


\subsection{Ingredients of the computation}

Our computation follows the theory of moduli space of Langlands parameters developed in \cite[Section 2 and 4]{dhkm2020moduli} (see also \cite[Section 3, 4]{dat2022ihes} for a more gentle introduction). It is very helpful to do the example of $\GL_2$ first (see Chapter \ref{Chapter GL_n}).

To compute the component $X_{\varphi}/\hat{G}$ over $\overline{\mathbb{Z}}_{\ell}$, let us fix a lift $\psi \in Z^1(W_F, \hat{G}(\overline{\mathbb{Z}}_{\ell}))$ of $\varphi \in Z^1(W_F, \hat{G}(\overline{\mathbb{F}}_{\ell}))$. Denote $\psi_{\ell}$ the restriction of $\psi$ to the prime-to-$\ell$ inertia $I_F^{\ell}$ (Definition \ref{Definition: prime-to-ell inertia}).

Recall by \cite[Subsection 4.6]{dat2022ihes},
$$X_{\varphi}=X_{\psi} \cong \left(\hat{G} \times Z^1(W_F, N_{\hat{G}}(\psi_{\ell}))_{\psi_{\ell}, \overline{\psi}}\right)/C_{\hat{G}}(\psi_{\ell})_{\overline{\psi}},$$
where $Z^1(W_F, N_{\hat{G}}(\psi_{\ell}))_{\psi_{\ell}, \overline{\psi}}$  denotes the space of cocycles whose restriction to the prime-to-$\ell$ inertia $I_F^{\ell}$ equals $\psi_{\ell}$ and whose image in $Z^1(W_F, \pi_0(N_{\hat{G}}(\psi_{\ell})))$ is $\overline{\psi}$. 

Here, $Z^1(W_F, N_{\hat{G}}(\psi_{\ell}))_{\psi_{\ell}, \overline{\psi}}$ is essentially the space of cocycles of the split torus $$T:=N_{\hat{G}}(\psi_{\ell})^0=C_{\hat{G}}(\psi_{\ell})$$
since $\varphi$ is a TRSELP and $C_{\hat{G}}(\psi_{\ell})$ is generalized reductive, hence split over $\overline{\mathbb{Z}}_{\ell}$ (see Lemma \ref{Lem gen red}). We compute the space of tame cocycles of a commutative group scheme using the explicit presentation of the tame Weil group (see \eqref{Equation presentation of the tame Weil group} and \eqref{Equation space of tame cocycle}), we obtain that 
$$Z^1(W_F, N_{\hat{G}}(\psi_{\ell}))_{\psi_{\ell}, \overline{\psi}} \cong T \times \mu,$$
where $\mu=(T^{\Fr=(-)^q})^0$ is a product of $\mu_{\ell^{k_i}}$'s (see Theorem \ref{Thm X} for details).
In addition, it is not hard to see that
$$C_{\hat{G}}(\psi_{\ell})_{\overline{\psi}}=C_{\hat{G}}(\psi_{\ell})=T.$$

Therefore, we get 
$$X_{\varphi} \cong \left(\hat{G} \times T \times \mu\right)/T$$
(see Theorem \ref{Thm X}).
One needs to be a bit careful about the $T$ action on $T$, because here a twist by $\psi(\Fr)$ is involved. One can compute that 
$$[X_{\varphi}/\hat{G}] \cong [\left(T \times \mu\right)/T] \cong [T/T] \times \mu,$$
where $T$ acts on $T$ via twisted conjugacy.\footnote{Note that so far, we do not assume $Z(\hat{G})$ to be finite, hence the result also applies for $\GL_n$.} After that, assuming that $Z(\hat{G})$ is finite, we can work in the category of diagonalizable group schemes (whose structure is clear, see \cite[p70, Section 5]{brochard2014autour}) to identify $[T/T]$ with $[*/S_{\varphi}]$.

\section{Representation side}
Let $G$ be a connected reductive group over $F$. We assume that $G$ is split, semisimple, and simply connected. Let $\pi \in \Rep_{\overline{\mathbb{F}}_{\ell}}(G(F))$ be an irreducible depth-zero regular supercuspidal representation. In this section, we explain Chapter \ref{Chapter Rep} on how to compute the block $\Rep_{\Lambda}(G(F))_{[\pi]}$ of $\Rep_{\Lambda}(G(F))$ containing $\pi$.

\subsection{Equivalence to the block of a finite group of Lie type}

Recall that a depth-zero regular supercuspidal representation of $G(F)$ is of the form
$$\pi=\cInd_{G_x}^{G(F)}\rho$$
for some representation $\rho$ of the parahoric subgroup $G_x$ corresponding to a vertex $x$ in the Bruhat-Tits building of $G$ over $F$. Moreover, $\rho$ is the inflation of some regular supercuspidal representation $\overline{\rho}$ of the finite group of Lie type $\overline{G_x}:=G_x/G_x^+$.

Let $\mathcal{A}_{x,1}$ denote the block $\Rep_{\Lambda}(\overline{G_x})_{[\overline{\rho}]}$ of $\Rep_{\Lambda}(\overline{G_x})$ containing $\overline{\rho}$. Similarly, let 
$$\mathcal{B}_{x,1}:=\Rep_{\Lambda}(G_x)_{[\rho]},\qquad \mathcal{C}_{x,1}:=\Rep_{\Lambda}(G(F))_{[\pi]}.$$

Assume that the residue field of $F$ is $\mathbb{F}_q$. For simplicity, we assume that $q$ is greater than the Coxeter number of $\overline{G_x}$ (see Theorem \ref{Thm Broué} for reason). Then $\mathcal{A}_{x,1}$ is equivalent to a block of a finite torus via Broué's equivalence \ref{Thm Broué}. In addition, it is not hard to show that the inflation induces an equivalence of categories $\mathcal{A}_{x,1} \cong \mathcal{B}_{x,1}$.

The main theorem we will prove for the representation side is the following (see Theorem \ref{Thm Main}). 

\begin{theorem}
	Assume that $q$ is greater than the Coxeter number of $\overline{G_x}$. Then the compact induction induces an equivalence of categories
$$\cInd_{G_x}^{G(F)}: \mathcal{B}_{x,1} \to \mathcal{C}_{x,1}.$$
\end{theorem}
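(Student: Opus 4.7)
The plan is to treat $(G_x,\rho)$ as an integral analogue of a Bushnell--Kutzko type for the block $\mathcal{C}_{x,1}$, and obtain the equivalence from the standard three-step argument: (i) establish an intertwining vanishing statement for objects of $\mathcal{B}_{x,1}$, (ii) deduce full faithfulness of $\cInd_{G_x}^{G(F)}$ via Mackey/Frobenius reciprocity, and (iii) verify essential surjectivity via an explicit quasi-inverse. The critical step is (i). Over $\overline{\mathbb{Q}}_\ell$ intertwining vanishing for regular depth-zero types is classical and reduces to cuspidality of $\overline\rho$, but over $\Lambda = \overline{\mathbb{Z}}_\ell$ one must control \emph{every} object of the block, not just $\rho$ itself, and it is precisely here that the hypothesis $q > $ Coxeter number is needed in order to invoke Broué's equivalence \ref{Thm Broué} and reduce the question to a tractable computation inside a block of modules over a finite torus algebra.

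\textbf{Step 1: intertwining vanishing.} We aim to show that for every $g \in G(F) \setminus G_x$ and all $V, V' \in \mathcal{B}_{x,1}$,
$$\Hom_{G_x \cap {}^g G_x}(V, {}^g V') = 0.$$
Since $G$ is split, semisimple and simply connected, and $x$ is a vertex in the Bruhat--Tits building, the parahoric $G_x$ is its own normalizer in $G(F)$: if $g G_x g^{-1} = G_x$ then $g\cdot x$ has the same parahoric as $x$, forcing $g\cdot x = x$. Consequently, for $g \notin G_x$ the intersection $G_x \cap {}^g G_x$ has image in the finite reductive quotient $\overline{G_x}$ contained in a proper parabolic subgroup $\overline{P}_g \subsetneq \overline{G_x}$. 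Passing through the inflation equivalence $\mathcal{A}_{x,1} \cong \mathcal{B}_{x,1}$ and the Broué equivalence of Theorem \ref{Thm Broué} identifying $\mathcal{A}_{x,1}$ with a block of $\Lambda[T]\text{-Mod}$ for a finite torus $T$, every object of $\mathcal{B}_{x,1}$ is inflated from $\overline{G_x}$ and sits in the Harish-Chandra series of the regular character that gives rise to $\overline\rho$; regularity then forces the $\overline P_g$-parabolic restriction to land in a different Harish-Chandra series from the trivial one of the Levi, and the Hom space vanishes.

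\textbf{Step 2: full faithfulness, and Step 3: essential surjectivity.} Mackey decomposition combined with Frobenius reciprocity yields
$$\Hom_{G(F)}\bigl(\cInd_{G_x}^{G(F)} V,\ \cInd_{G_x}^{G(F)} V'\bigr) \;\simeq\; \prod_{g \in G_x \backslash G(F) / G_x} \Hom_{G_x \cap {}^g G_x}\bigl(V, {}^g V'\bigr),$$
so by Step 1 only the trivial double coset survives, leaving $\Hom_{G_x}(V, V')$; this proves full faithfulness, and applied to $V = V' = \rho$ it recovers the irreducibility of $\pi = \cInd_{G_x}^{G(F)}\rho$ and shows that the image of $\cInd_{G_x}^{G(F)}$ lies in $\mathcal{C}_{x,1}$. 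For essential surjectivity, the natural quasi-inverse is the block-projected restriction $\sigma \mapsto (\sigma|_{G_x})_{[\rho]}$. Mackey together with Step 1 gives $\bigl((\cInd_{G_x}^{G(F)} V)|_{G_x}\bigr)_{[\rho]} \cong V$, and the counit $\cInd_{G_x}^{G(F)}\bigl((\sigma|_{G_x})_{[\rho]}\bigr) \to \sigma$ is checked to be an isomorphism by reducing to irreducible $\sigma$, where the statement amounts to the well-known fact that every irreducible object of the depth-zero regular supercuspidal block is compactly induced from an irreducible object of $\mathcal{B}_{x,1}$; the general case then follows from exactness of both functors and five-lemma arguments within the block.
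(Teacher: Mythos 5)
Your Steps 1--2 are close in spirit to the paper's Theorems \ref{Thm SC Red} and \ref{Thm Hom}: the paper shows via Broué's equivalence that every object of $\mathcal{B}_{x,1}$ has supercuspidal reduction, and then runs a Mackey/Frobenius computation reducing intertwining vanishing to $\Hom_U(1_U,\overline\rho)=\Hom_U(\overline\rho,1_U)=0$ for $U$ the unipotent radical of a proper parabolic of $\overline{G_x}$. Your phrasing via ``Harish-Chandra series of a regular character'' is a more roundabout way of saying the same thing (and slightly misstated, since what matters is supercuspidality of $\overline\rho$, not the regularity of the underlying character), but the idea matches. One small inaccuracy: for $g\notin G_x$ the image of $G_x\cap{}^gG_x$ need not be \emph{contained} in a proper parabolic of $\overline{G_x}$; rather, the image of $G_x\cap{}^gG_x^+$ contains the unipotent radical of a proper parabolic (cf.\ Lemma \ref{Lem Passage to Residue Field}), which is what makes the supercuspidal vanishing argument go through.

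The genuine gap is in Step 3. You write that surjectivity of the counit on irreducibles ``amounts to the well-known fact that every irreducible object of the depth-zero regular supercuspidal block is compactly induced from an irreducible object of $\mathcal{B}_{x,1}$.'' Over $\overline{\mathbb{Z}}_{\ell}$ this is not a known input---it is essentially the content of essential surjectivity, so the argument is circular. The block $\mathcal{C}_{x,1}\subseteq\Rep_{\overline{\mathbb{Z}}_\ell}(G(F))$ is a priori an unknown abelian category whose simple objects are not classified in advance; one cannot assume they all arise by compact induction. Moreover, passing from irreducibles to arbitrary objects via ``five-lemma arguments'' requires knowing that every object of $\mathcal{C}_{x,1}$ has a resolution by compactly induced objects, which again presupposes what is to be proven (objects of $\Rep_{\Lambda}(G(F))$ need not have finite length). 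The paper avoids both issues by taking a different route for essential surjectivity: it proves (Theorem \ref{Thm Proj}) that $\Pi_{x,1}=\cInd_{G_x}^{G(F)}\sigma_{x,1}$ is a \emph{projective generator} of $\mathcal{C}_{x,1}$. The key is that $\Pi_{x,1}$ is a direct summand of the known projective generator $\Pi=\cInd_{G_x^+}^{G(F)}\Lambda\oplus\cdots$ of the depth-zero category, and that the complementary summand $\Pi^{x,1}$ satisfies $\Hom_G(\Pi^{x,1},\Pi_{x,1})=\Hom_G(\Pi_{x,1},\Pi^{x,1})=0$ by the intertwining computation (Lemma \ref{Lem Ortho}), whence $\Hom_G(\Pi^{x,1},X)=0$ for all $X\in\mathcal{C}_{x,1}$ (Lemma \ref{Lem Gen}). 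From there, resolving any $\pi'\in\mathcal{C}_{x,1}$ by copies of $\Pi_{x,1}$ and using exactness of $\cInd$ gives surjectivity. You would need to supply this argument (or an equivalent unconditional one) to close Step 3.
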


Once this is proven, $\mathcal{C}_{x,1}$ is equivalent to $\mathcal{A}_{x,1}$, hence admits an explicit description. The proof of the Theorem \ref{Thm Main} occupies the most of Chapter \ref{Chapter Rep}. 

\subsection{Proof of the main theorem for the representation side}

In the rest of the section, let us briefly explain the idea of the proof of Theorem \ref{Thm Main}.

The fully faithfulness of 
$$\cInd_{G_x}^{G(F)}: \mathcal{B}_{x,1} \to \mathcal{C}_{x,1}$$
is a usual computation by Frobenius reciprocity and Mackey's formula. Since a similar computation will be used later, we record it in Theorem \ref{Thm Hom}. The key point is that 
$$\Hom_G\left(\cInd_{G_x}^{G(F)}\rho_1, \cInd_{G_y}^{G(F)}\rho_2\right)$$
can be computed explicitly assuming that one of $\rho_1, \rho_2$ has supercuspidal reduction (i.e. $\overline{\rho_1}$ or $\overline{\rho_2}$ is supercuspidal).\footnote{There is a little subtlety that we want not only $\rho$ to have supercuspidal reduction but also any representation $\rho' \in \mathcal{B}_{x,1}$ to have supercuspidal reduction. This subtlety is dealt with in Theorem \ref{Thm SC Red}. This is why we need the \textbf{regular} supercuspidal assumption. }


The difficulty lies in proving that
$$\cInd_{G_x}^{G(F)}: \mathcal{B}_{x,1} \to \mathcal{C}_{x,1}$$
is essentially surjective. For this, we prove that the compact induction $$\Pi_{x,1}:=\cInd_{G_x}^{G(F)}\sigma_{x,1}$$ 
is a projective generator of $\mathcal{C}_{x,1}$. 

The first key is that $\Pi_{x,1}$ is a summand of a projective generator of a larger category. Indeed, $\Pi_{x,1}$ is a summand of
$$\Pi:=\cInd_{G_x^+}^{G(F)}\Lambda,$$
where $G_x^+$ is the pro-unipotent radical of the parahoric subgroup $G_x$; and $\Pi$ is known to be a projective generator of the category $\Rep_{\Lambda}(G(F))_0$ of depth-zero representations. So we can write 
$$\Pi=\Pi_{x,1} \oplus \Pi^{x,1}.$$

The second key is that the complement $\Pi^{x,1}$ does not interfere with $\Pi_{x,1}$. More precisely, we can compute using Theorem \ref{Thm Hom} that 
$$\Hom_{G}(\Pi^{x,1}, \Pi_{x,1})=\Hom_{G}(\Pi_{x,1}, \Pi^{x,1})=0.$$

The above two keys allow us to conclude that $\Pi_{x,1}$ is a projective generator of $\mathcal{C}_{x,1}$.

\section{The example of $\GL_n$}

To illustrate the theory, we do the example of $\GL_n$ in Chapter \ref{Chapter GL_n}.\footnote{Although $\GL_n$ is not simply connected, the theory still works without much change.} It is quite concise once we have the theories developed in Chapter \ref{Chapter MoLP} and \ref{Chapter Rep}, so let us do not say anything more here. However, the readers are encouraged to do the example of $\GL_2$ throughout the paper, which will help to understand the theories in Chapter \ref{Chapter MoLP} and \ref{Chapter Rep}. 

\section{The categorical local Langlands conjecture for $\GL_n$}

As an application, we will deduce the categorical local Langlands conjecture in Fargues-Scholze's form (see \cite[Conjecture X.3.5]{fargues2021geometrization}) for depth-zero supercuspidal blocks of $\GL_n$ in Chapter \ref{Chapter CLLC}.\footnote{Notice that supercuspidal implies regular supercuspidal automatically in the $\GL_n$ case.}

The idea is that we can unravel both sides of the categorical conjecture explicitly using our computation in Chapter \ref{Chapter GL_n}. They both turn out to be
$$\bigoplus_{\mathbb{Z}}\Perf(\mathbb{G}_m \times \mu).$$ 
We want to show that the spectral action gives an equivalence. This reduces to the degree-zero part (of the $\mathbb{Z}$-grading) by compatibility of the spectral action with the $\mathbb{Z}$-grading (see Proposition \ref{Prop Spectral action}). Finally, the degree-zero part reduces to the theory of local Langlands in families (see \cite{helm2018converse}). Fortunately, several technical results we need to do the reductions are already available by \cite{zou2022categorical}.

\section{Preliminaries and notations}
Although the main chapters \ref{Chapter MoLP}, \ref{Chapter Rep}, \ref{Chapter CLLC} are almost logically independent of each other, let us introduce some notations that are used throughout the thesis.
\begin{enumerate}
	\item Let $G$ be a connected split reductive group.\footnote{The assumption ``split" is intended for simplicity. It is expected that the results generalize to more general reductive groups.} In Chapter \ref{Chapter Rep}, we moreover assume that $G$ is simply connected.\footnote{Again, the assumption ``simply connected" is intended for simplicity. So that we do not need to consider the extension from $G_x$ to its normalizer when doing compact induction.}
	\item Let $\overline{\mathbb{Z}}_{\ell}$ be the integral closure of $\mathbb{Z}_{\ell}$ in $\overline{\mathbb{Q}}_{\ell}$. $\overline{\mathbb{Z}}_{\ell}$ is also the valuation ring of  $\overline{\mathbb{Q}}_{\ell}$, i.e., it consists of the elements with valuations $\geq 0$. $\overline{\mathbb{Z}}_{\ell}$ is a local ring with the unique maximal ideal consisting of elements with valuations $> 0$. $\overline{\mathbb{Z}}_{\ell}$ is strictly henselian, hence all finite étale covers of $\overline{\mathbb{Z}}_{\ell}$ split. An important fact is that any reductive group scheme over $\overline{\mathbb{Z}}_{\ell}$ is split.\footnote{One drawback of $\overline{\mathbb{Z}}_{\ell}$ is that it is not Neotherian. However, in practice, things defined over $\overline{\mathbb{Z}}_{\ell}$ are always already defined over some finite extension $\mathcal{O}$ of $\mathbb{Z}_{\ell}$. So we can first argue for $\mathcal{O}$ and then base change to $\overline{\mathbb{Z}}_{\ell}$. As an aside, the author believes that it does not make a difference if we replace $\overline{\mathbb{Z}}_{\ell}$ by $W(\overline{\mathbb{F}}_{\ell})$, the ring of Witt vectors of $\overline{\mathbb{F}}_{\ell}$, which is the unique complete discrete valuation ring with residue field $\overline{\mathbb{F}}_{\ell}$.}
\end{enumerate}

\section{Logical dependence and suggestions for reading}
Chapter \ref{Chapter MoLP} and \ref{Chapter Rep} are logically independent of each other. Chapter \ref{Chapter CLLC} needs the description of $X_{\varphi}/\hat{G}$ for $G=\GL_n$ obtained in Chapter \ref{Chapter GL_n}, by applying the theory in Chapter \ref{Chapter MoLP} to the case $G=\GL_n$. Also, it might be easier to assume $G=\GL_2$ or $SL_2$ throughout the paper, and one would not miss many essential points in doing so.

\section{Acknowledgements}

It is a pleasure to thank my advisor Peter Scholze for giving me this topic to study and for sharing ideas related to it. Next, I would like to thank my families and friends for their love and support. I thank Johannes Anschütz, Jean-François Dat, Olivier Dudas, Jessica Fintzen, Linus Hamann, Eugen Hellmann, David Helm, Alexander Ivanov, Tasho Kaletha, David Schwein, Vincent Sécherre, Maarten Solleveld, and Marie-France Vignéras for some nice conversations related to this work. I thank my bachelor advisors Kei Yuen Chan and Haining Wang for some helpful suggestions. I thank Anne-Marie Aubert for some helpful discussions and for pointing out some errors. Special thanks go to Jiaxi Mo, Mingjia Zhang, Pengcheng Zhang, Xiaoxiang Zhou, and Konrad Zou for their consistent interest in this work as well as emotional support.

\chapter{TRSELP components of the stack of $L$-parameters} \label{Chapter MoLP}

    Let $\varphi \in Z^1(W_F, \hat{G}(\overline{\mathbb{F}}_{\ell}))$ be a TRSELP (see Definition \ref{Def TRSELP}). In this chapter, we compute the connected component $X_{\varphi}/\hat{G}$ of the stack of $L$-parameters $Z^1(W_F, \hat{G})_{\overline{\mathbb{Z}}_{\ell}}/\hat{G}$ containing $\varphi$. In Section \ref{Section X_phi}, following the theory developed in \cite[Section 3, 4]{dat2022ihes}, we first compute the connected component $X_{\varphi}$ of the space of $1$-cocycles $Z^1(W_F, \hat{G})_{\overline{\mathbb{Z}}_{\ell}}$ (without modulo out the $\hat{G}$-action):
    \begin{equation}\label{Equation X}
    	X_{\varphi} \cong (\hat{G} \times T \times \mu)/T
    \end{equation}
    where $T$ is a split torus and $\mu$ is some group scheme of roots of unity (see Theorem \ref{Thm X} for details). In Section \ref{Section X/hatG}, we use \eqref{Equation X} to obtain a description of $X_{\varphi}/\hat{G}$ under mild conditions (see Theorem \ref{Thm X/G}):
    \begin{equation}
      X_{\varphi}/\hat{G} \cong [*/S_{\psi}] \times \mu,
    \end{equation}
    where $\psi \in Z^1(W_F, \hat{G}(\overline{\mathbb{Z}}_{\ell}))$ is any lift of $\varphi$, and $S_{\psi}:=C_{\hat{G}}(\psi)$ is the centralizer of $\psi$.
    
	\section{The connected component $X_{\varphi}$ containing a TRSELP $\varphi$}\label{Section X_phi}
	
	The goal of this section is to compute the connected component $X_{\varphi}$ of $Z^1(W_F, \hat{G})_{\overline{\mathbb{Z}}_{\ell}}$ containing a TRSELP $\varphi$. In \ref{Subsection MoLP}, we recall the theory of moduli space of Langlands parameters. In \ref{Subsection TRSELP}, we define the class of $L$-parameters that we are interested in -- tame regular semisimple elliptic $L$-parameters (TRSELP for short). In \ref{Subsection the component}, we compute $X_{\varphi}$ explicitly as $(\hat{G}\times T\times \mu)/T$ using the theory of moduli space of Langlands parameters. In \ref{Subsection T-action}, we spell out the $T$-action on $(\hat{G}\times T\times \mu)$ to prepare for the next section.
	
	\subsection{Recollections on the moduli space of Langlands parameters}\label{Subsection MoLP}
	
	Since our computation heavily uses the theory of moduli space of Langlands parameters, we recollect some basic facts here. For more sophisticated knowledge that will be used, we refer to \cite[Section 3, 4]{dat2022ihes}, or \cite[Section 2, 4]{dhkm2020moduli}. 
	
	Let us first fix some notations.
	\begin{itemize}
		\item Let $p \neq 2$ be a fixed prime number and $\ell \neq 2$ be a prime number different from $p$. 
		\item Let $F$ be a non-archimedean local field with residue field $\mathbb{F}_q$, where $q=p^r$ for some $r \in \mathbb{Z}_{\geq 1}$.
		\item Let $W_F$ be the Weil group of $F$, $I_F \subseteq W_F$ be the inertia subgroup, and $P_F \subseteq W_F$ be the wild inertia subgroup.
		\item Let $W_t:=W_F/P_F$ be the tame Weil group, $I_t:=I_F/P_F$ be the tame inertia subgroup in $W_t$.
		\item Let $G$ be a connected split reductive group over $F$.
	\end{itemize}
	     Fix $\Fr \in W_F$ any lift of the arithmetic Frobenius element. We will abuse the notation and denote by $\Fr$ the image of $\Fr$ in $W_t$. We have $W_t \cong I_t \rtimes \left<\Fr\right>$. Here, $I_t$ is non-canonically isomorphic to $\prod_{p'\neq p}\mathbb{Z}_{p'}$, which is procyclic. We fix such an isomorphism
	     \begin{equation}\label{Eq I_t}
	     	I_t \cong \prod_{p'\neq p}\mathbb{Z}_{p'}.
	     \end{equation}
    This gives rise to a topological generator $s_0$ of $I_t$, which corresponds to $(1, 1, ...)$ under the isomorphism \eqref{Eq I_t}. Let us recall the following relation in $I_F/P_F$:
	\begin{equation}\label{Eq Fr s_0}
		\Fr s_0 \Fr^{-1}=s_0^q.
	\end{equation}
	In fact, this is true for any $s \in I_t$ instead of $s_0$.
	
	Let 
	$$W_t^0:=\left<s_0, \Fr\right>=\mathbb{Z}[1/p]^{s_0} \rtimes \mathbb{Z}^{\Fr}$$ 
	be the subgroup of $W_t$ generated by $s_0$ and $\Fr$. Let $W_F^0$ denote the preimage of $W_t^0$ under the natural projection $W_F \to W_t$. $W_F^0$ is referred to as the discretization of the Weil group. To summarize, $W_t^0$ is generated by two elements $\Fr$ and $s_0$ with a single relation, i.e., 
	\begin{equation}\label{Equation presentation of the tame Weil group}
		W_t^0=\left<\Fr, s_0\;|\;\Fr s_0 \Fr^{-1}=s_0^q\right>.
	\end{equation} 
	
	Let $G$ be a connected split reductive group over $F$. Let $\hat{G}$ be its dual group over $\mathbb{Z}$. Then the space of cocycles from the discretization
	\begin{equation}\label{Equation space of tame cocycle}
		Z^1(W_t^0, \hat{G})=\underline{\Hom}(W_t^0, \hat{G})=\{(x, y) \in \hat{G} \times \hat{G}\;|\;yxy^{-1}=x^q\}
	\end{equation}
	is an explicit closed subscheme of $\hat{G} \times \hat{G}$ (see \cite[Section 3]{dat2022ihes}). An important fact (see \cite[Proposition 3.9]{dat2022ihes}) is that over a $\mathbb{Z}_{\ell}$-algebra $R$ (the cases $R=\overline{\mathbb{F}}_{\ell}, \overline{\mathbb{Z}}_{\ell}, \overline{\mathbb{Q}}_{\ell}$ are most relevant for us), the restriction from $W_t$ to $W_t^0$ induces an isomorphism
	$$Z^1(W_t, \hat{G}) \cong Z^1(W_t^0, \hat{G}).$$ 
	Therefore, we can compute $Z^1(W_t, \hat{G})$ using the explicit formula \eqref{Equation space of tame cocycle} above. This is fundamental for the study of the moduli space of Langlands parameters $Z^1(W_t, \hat{G})$. We refer the readers to \cite[Section 3, 4]{dat2022ihes} for the precise definition and properties of $Z^1(W_t, \hat{G})$.\footnote{Although we start with a split reductive group, the space of cocycles of certain non-split reductive group would occur when describing the TRSELP component $X_{\varphi}$ of $Z^1(W_F, \hat{G})$ (for example, the space $Z^1_{Ad(\psi)}(W_F, N_{\hat{G}}(\psi_{\ell})^0)$ occurring in the proof of Theorem \ref{Thm X}). We refer the reader to \cite{dat2022ihes} and \cite{dhkm2020moduli} for the definition of $Z^1(W_F, H)$ for general group $H$.} 
	
	\begin{eg}\label{Example GL_1}
		For $G=GL_1$,
	  \begin{equation}
	  \begin{aligned}
		&Z^1(W_t, \hat{G}) \cong Z^1(W_t^0, \hat{G})\\
		=\;&\{(x, y) \in GL_1 \times GL_1\;|\;yxy^{-1}=x^q\}\\
		=\;&\{(x, y) \in GL_1 \times GL_1\;|\;x=x^q\} \cong \mu_{q-1} \times \mathbb{G}_m.
	  \end{aligned}
      \end{equation}
      
      More generally, let $\hat{T}$ be a (possibly non-split) torus equipped with a $W_F$-action. We can compute similarly by tracing the image of $s_0$ and $\Fr$ that
      \begin{equation}\label{Equation: Z^1(W, T)}
      Z^1(W_t, \hat{T}) \cong \hat{T} \times \hat{T}^{\Fr=(-)^q},
      \end{equation} 
      where $\hat{T}^{\Fr=(-)^q}$ is the subscheme of $\hat{T}$ on which $\Fr$ acts by raising to $q$-th power.\footnote{See \cite[Example 3.14]{dat2022ihes} for details. See also the proof of Theorem \ref{Thm X} for an example -- $Z^1_{Ad(\psi)}(W_F, N_{\hat{G}}(\psi_{\ell})^0)$.}
	\end{eg}
	
	\begin{definition}\label{Definition: prime-to-ell inertia}
		Let $I_F^{\ell}$ be the prime-to-$\ell$ inertia subgroup of $W_F$, i.e., $I_F^{\ell}:=\ker(t_{\ell})$, where 
	$$t_\ell: I_F \to I_F/P_F \cong \prod_{p' \neq p}\mathbb{Z}_{p'} \to \mathbb{Z}_\ell$$
	is the composition.
	\end{definition} In other words, $I_F^{\ell}$ is the maximal subgroup of $I_F$ with pro-order prime to $\ell$. This property makes $I_F^{\ell}$ important when determining the connected components of $Z^1(W_F, \hat{G})$ over $\overline{\mathbb{Z}}_{\ell}$ (see \cite[Theorem 4.2 and Subsection 4.6]{dat2022ihes}). 
	
	\subsection{Tame regular semisimple elliptic $L$-parameters}\label{Subsection TRSELP}
	
	We want to define a class of $L$-parameters, called TRSELP, which are expected to correspond to depth-zero regular supercuspidal representations via the local Langlands correspondence. Before that, let us define the concept of schematic centralizer, which will be used throughout the chapter.
	
	\begin{definition}[Schematic centralizer]\label{Definition: Schematic centralizer}
	Let $H$ be an affine algebraic group over a ring $R$, and let $\Gamma$ be a finite group. Let $u \in Z^1(\Gamma, H(R'))$ be a $1$-cocycle for some $R$-algebra $R'$. Let 
	$$\alpha_u: H_{R'} \longrightarrow Z^1(\Gamma, H)_{R'}\qquad h \longmapsto hu(-)h^{-1}$$
	 be the orbit morphism. Then the schematic centralizer $C_H(u)$ is defined as the fiber of $\alpha_u$ at $u$.
	$$	
	\begin{tikzcd}
		{C_H(u)} \arrow[r, ""] \arrow[d, ""] & {H_{R'}} \arrow[d, "{\alpha_u}"] \\
		{\Spec(R')} \arrow[r, "u"]                & {Z^1(\Gamma, H)_{R'}}               
	\end{tikzcd}
	$$	
	\end{definition}
	
	One can show that its $R''$-valued points $C_H(u)(R'')=C_{H(R'')}(u)$ is the set-theoretic centralizer for all $R'$-algebra $R''$, see for example \cite[Appendix A]{dhkm2020moduli}.
	
	\begin{remark}\
		\begin{enumerate}
			\item Note that the above definition is enough for our applications where $\Gamma$ is more generally taken as a profinite group, because $u: \Gamma \to H$ will factor through a finite quotient $\Gamma'$ of $\Gamma$ in practice.
			\item We can define schematic normalizer $N_H(u)$ similarly such that it represents the set-theoretic normalizers (see \cite{conrad2014reductive}).
		\end{enumerate}
			
	\end{remark}
	
	Let us now define a tame, regular semisimple, elliptic Langlands parameter (TRSELP) over $\overline{\mathbb{F}}_{\ell}$, roughly in the sense of \cite[Section 3.4, 4.1]{debacker2009depth}, but with $\overline{\mathbb{F}}_{\ell}$-coefficients instead of $\mathbb{C}$-coefficients.
	
	\begin{definition}\label{Def TRSELP}
		A \textbf{tame regular semisimple elliptic $L$-parameter (TRSELP) over $\overline{\mathbb{F}}_{\ell}$} is a homomorphism $\varphi: W_F \to \hat{G}(\overline{\mathbb{F}}_{\ell})$ such that:
		\begin{enumerate}
			\item (smooth) $\varphi(I_F)$ is a finite subgroup of $\hat{G}(\overline{\mathbb{F}}_{\ell})$.
			\item (Frobenius semisimple) $\varphi(\Fr)$ is a semisimple element of $\hat{G}(\overline{\mathbb{F}}_{\ell})$.
			\item (tame) The restriction of $\varphi$ to $P_F$ is trivial.
			\item \label{regular semisimple}(regular semisimple) The centralizer of the inertia $C_{\hat{G}}(\varphi|_{I_F})$ is a torus over $\overline{\mathbb{F}}_{\ell}$ (in particular, connected).
			\item \label{elliptic} (elliptic) The identity component $C_{\hat{G}}(\varphi)^0$ of the centralizer $C_{\hat{G}}(\varphi)$ is equal to the identity  component $Z(\hat{G})^0$ of the center $Z(\hat{G})$.
			\end{enumerate}
	\end{definition}

    Concretely, a TRSELP consists of the following data:
    
    \begin{enumerate}
    	\item The restriction to the inertia $\varphi|_{I_F}$, which is essentially a direct sum of characters of some $\mathbb{F}_{q^n}^*$.\footnote{Think about the example of $GL_n$. For general $G$, choose a faithful embedding $\hat{G} \hookrightarrow GL(V)$.} Indeed, $I_F/P_F \cong \varprojlim\mathbb{F}_{q^n}^*$ and
    	$$\Hom_{\Cont}(I_F/P_F, \overline{\mathbb{F}}_{\ell}^*) \cong \Hom_{\Cont}(\varprojlim\mathbb{F}_{q^n}^*, \overline{\mathbb{F}}_{\ell}^*) \cong \varinjlim\Hom_{\Cont}(\mathbb{F}_{q^n}^*, \overline{\mathbb{F}}_{\ell}^*).$$
    	In particular, it factors through (the $\overline{\mathbb{F}}_{\ell}$-points of) some maximal torus, say $S$. Then $\varphi$ being regular semisimple means that $C_{\hat{G}(\overline{\mathbb{F}}_{\ell})}(\varphi(I_F))=S$.
    	\item The image of the Frobenius $\varphi(\Fr)$, which turns out to be an element of the normalizer $N_{\hat{G}(\overline{\mathbb{F}}_{\ell})}(S)$ (Since $\Fr.s.\Fr^{-1}=s^q \in I_t$ for any $s \in I_t$ implies that $\varphi(\Fr)$ normalizes $C_{\hat{G}(\overline{\mathbb{F}}_{\ell})}(\varphi(I_F))=S$.). 
        In addition, ``elliptic" means that the center $Z(\hat{G})$ has finite index in the centralizer $C_{\hat{G}}(\varphi)$. As we will see later, when $Z(\hat{G})$ is finite, ellipticity implies that $\hat{G}(\overline{\mathbb{F}}_{\ell})$ acts transitively on the connected component $X_{\varphi}(\overline{\mathbb{F}}_{\ell})$ of the moduli space of $L$-parameters containing $\varphi$ (see the proof of Lemma \ref{Lem epic}), which is essential for the description
        $X_{\varphi}/\hat{G} \cong [*/S_{\psi}]$ in Theorem \ref{Thm X/G}.
    \end{enumerate}

    \begin{eg}
    	For $G=GL_n$, a TRSELP is the same as an irreducible tame $L$-parameter. See Section \ref{Example Lparam} for the irreducible tame $L$-parameters of $GL_n$ expressed in explicit matrices.
    \end{eg}

  \begin{remark}\
  	\begin{enumerate}
  		\item Let $A \in \{\overline{\mathbb{Z}}_{\ell}, \overline{\mathbb{Q}}_{\ell}, \overline{\mathbb{F}}_{\ell}\}$. It is important for our purpose to distinguish between the set-theoretic centralizer (for example, $C_{\hat{G}(A)}(\varphi(W_F))$) and the schematic centralizer (for example, $C_{\hat{G}}(\varphi)$). However, we might still use $\hat{G}$ to mean $\hat{G}(A)$ sometimes by abuse of notation, which we hope the reader can recognize. One reason for doing so is that $\hat{G}$ is split over $A$; hence, $\hat{G}$ is completely determined by its $A$-points. Many statements can either be phrased in terms of the $A$-scheme or its $A$-points (for example, \ref{regular semisimple} and \ref{elliptic} in Definition \ref{Def TRSELP}).
  		\item As we will see later in Theorem \ref{Thm X}, $S=C_{\hat{G}(\overline{\mathbb{F}}_{\ell})}(\varphi(I_F))$ turns out to be the $\overline{\mathbb{F}}_{\ell}$-points of the split torus $T=C_{\hat{G}}(\psi|_{I_F^{\ell}})$ for any lift $\psi$ of $\varphi$ over $\overline{\mathbb{Z}}_{\ell}$.
  	\end{enumerate}
  \end{remark}

\subsection{Description of the component}\label{Subsection the component}

Now let us fix a TRSELP $\varphi \in Z^1(W_F, \hat{G}(\overline{\mathbb{F}}_{\ell}))$. Pick any lift $\psi \in Z^1(W_F, \hat{G}(\overline{\mathbb{Z}}_{\ell}))$ of $\varphi$, whose existence is ensured by the flatness of $Z^1(W_F, \hat{G})_{\overline{\mathbb{Z}}_{\ell}}$ (see Lemma \ref{Lem generalizing}). Let $\psi_{\ell}:=\psi|_{I_F^{\ell}}$ denote the restriction of $\psi$ to the prime-to-$\ell$ inertia $I_F^{\ell}$. Note that $\psi \in Z^1(W_F, \hat{G})$ factors through the schematic normalizer $N_{\hat{G}}(\psi_{\ell})$ (since $I_F^{\ell}$ is normal in $W_F$), so we can view $\psi$ as a point in $Z^1(W_F, N_{\hat{G}}(\psi_{\ell}))$. Let $\overline{\psi}:=j \circ \psi \in Z^1(W_F, \pi_0(N_{\hat{G}}(\psi_{\ell})))$, where $j: N_{\hat{G}}(\psi_{\ell}) \to \pi_0(N_{\hat{G}}(\psi_{\ell}))$ is the projection. Let $X_{\varphi}$ be the connected component of $Z^1(W_F, \hat{G})_{\overline{\mathbb{Z}}_{\ell}}$ containing $\varphi$. Note that $X_{\varphi}$ also contains $\psi$ since $\psi$ specializes to $\varphi$. Therefore,  we sometimes also denote $X_{\varphi}$ as $X_{\psi}$. Such a component is referred to as a TRSELP component.

We shall compute $X_{\psi}$ directly using the theory developed in \cite[Section 4]{dat2022ihes}. One might want to work out the example of $GL_2$ (see Example \ref{Example: GL_2}) to understand what is happening below.

It turns out that the component $X_{\varphi}=X_{\psi}$ of $Z^1(W_F, \hat{G})_{\overline{\mathbb{Z}}_{\ell}}$ consists of the $L$-parameters whose restriction to $I_F^{\ell}$ and whose image in $Z^1(W_F, \pi_0(N_{\hat{G}}(\psi_{\ell})))$ is $\hat{G}$-conjugate to $(\psi_{\ell}, \overline{\psi})$. This is the content of the next lemma.

\begin{lemma}\label{Lemma: X}
	We have an isomorphism of schemes
	\begin{equation}\label{Equation: X_psi_dat_4.6}
		X_{\psi} \cong \left(\hat{G} \times Z^1(W_F, N_{\hat{G}}(\psi_{\ell}))_{\psi_{\ell}, \overline{\psi}}\right)/C_{\hat{G}}(\psi_{\ell})_{\overline{\psi}} \qquad g\eta(-)g^{-1} \mapsfrom (g, \eta),
	\end{equation}
	where $Z^1(W_F, N_{\hat{G}}(\psi_{\ell}))_{\psi_{\ell}, \overline{\psi}}$  denotes the space of cocycles whose restriction to $I_F^{\ell}$ equals $\psi_{\ell}$ and whose image in $Z^1(W_F, \pi_0(N_{\hat{G}}(\psi_{\ell})))$ is $\overline{\psi}$; where $C_{\hat{G}}(\psi_{\ell})_{\overline{\psi}}$ is the (schematic) stabilizer of $\overline{\psi}$ in $C_{\hat{G}}(\psi_{\ell})$; and where $C_{\hat{G}}(\psi_{\ell})_{\overline{\psi}}$ acts on $\hat{G} \times Z^1(W_F, N_{\hat{G}}(\psi_{\ell}))_{\psi_{\ell}, \overline{\psi}}$ by 
	$$(t, (g, \psi')) \mapsto (gt^{-1}, t\psi'(-)t^{-1}),$$
	where $t \in C_{\hat{G}}(\psi_{\ell})_{\overline{\psi}}$ and $(g, \psi') \in \hat{G} \times Z^1(W_F, N_{\hat{G}}(\psi_{\ell}))_{\psi_{\ell}, \overline{\psi}}$.
\end{lemma}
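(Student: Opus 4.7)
The plan is to reduce this lemma to the description of connected components of $Z^1(I_F^{\ell}, \hat{G})_{\overline{\mathbb{Z}}_{\ell}}$ from \cite[Section 4]{dat2022ihes}. The essential input is that because $I_F^{\ell}$ has pro-order prime to $\ell$, the connected components of $Z^1(I_F^{\ell}, \hat{G})_{\overline{\mathbb{Z}}_{\ell}}$ are exactly the $\hat{G}$-conjugacy orbits of inertial types, and the orbit morphism $g \mapsto g\psi_{\ell}(-)g^{-1}$ presents the component containing $\psi_{\ell}$ as the smooth homogeneous space $\hat{G}/C_{\hat{G}}(\psi_{\ell})$.

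Concretely, I would define
$$\Phi: \hat{G} \times Z^1(W_F, N_{\hat{G}}(\psi_{\ell}))_{\psi_{\ell}, \overline{\psi}} \longrightarrow Z^1(W_F, \hat{G}), \qquad (g, \eta) \longmapsto g\eta(-)g^{-1},$$
and first check that its image lies in $X_{\psi}$: the source is connected (the first factor is $\hat{G}$, and the second is a fiber of the projection to the discrete scheme $Z^1(W_F, \pi_0(N_{\hat{G}}(\psi_{\ell})))$ intersected with the locus of cocycles restricting to $\psi_{\ell}$ on $I_F^{\ell}$, which identifies with a twisted cocycle space of the connected group $N_{\hat{G}}(\psi_{\ell})^0 = C_{\hat{G}}(\psi_{\ell})$ along the lines of Example \ref{Example GL_1}), and the point $(1, \psi)$ maps to $\psi$. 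A direct calculation verifies that $\Phi$ is invariant for the stated $C_{\hat{G}}(\psi_{\ell})_{\overline{\psi}}$-action, so it descends to a morphism $\overline{\Phi}$ on the quotient.

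For surjectivity of $\overline{\Phi}$ onto $X_{\psi}$, given $\psi' \in X_{\psi}(R)$ for an $\overline{\mathbb{Z}}_{\ell}$-algebra $R$, the restriction $\psi'|_{I_F^{\ell}}$ lies in the component of $\psi_{\ell}$, so the key input produces fppf-locally on $R$ some $g \in \hat{G}(R)$ with $g\psi_{\ell}(-)g^{-1} = \psi'|_{I_F^{\ell}}$. Then $\eta := g^{-1}\psi'(-)g \in Z^1(W_F, N_{\hat{G}}(\psi_{\ell}))$ restricts to $\psi_{\ell}$, and its image in $Z^1(W_F, \pi_0(N_{\hat{G}}(\psi_{\ell})))$ must equal $\overline{\psi}$ by connectedness of $X_{\psi}$ and the discreteness of the target. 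For injectivity, if $\Phi(g_1,\eta_1) = \Phi(g_2,\eta_2)$, then restricting to $I_F^{\ell}$ forces $t := g_2^{-1}g_1$ to centralize $\psi_{\ell}$; the identity $\eta_2 = t\eta_1(-)t^{-1}$ follows, and projecting to $\pi_0$ shows $t \in C_{\hat{G}}(\psi_{\ell})_{\overline{\psi}}$, so $(g_1, \eta_1) = t \cdot (g_2, \eta_2)$ under the stated action.

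The main obstacle is promoting this pointwise and fppf-local bijection to an isomorphism of $\overline{\mathbb{Z}}_{\ell}$-schemes, since the local lift $g$ is not global. The standard remedy is that the orbit morphism $\hat{G} \to \hat{G}/C_{\hat{G}}(\psi_{\ell})$ is a smooth $C_{\hat{G}}(\psi_{\ell})$-torsor, so the set-theoretic inverse to $\overline{\Phi}$ constructed above descends from the fppf-cover to a genuine morphism of schemes; equivalently one argues that $\overline{\Phi}$ is a monomorphism that is surjective fppf-locally, hence an isomorphism onto its image, and the image is $X_{\psi}$ because it is connected and contains $\psi$. This scheme-theoretic bookkeeping is precisely what \cite[Subsection 4.6]{dat2022ihes} carries out, and I would cite it rather than reprove it.
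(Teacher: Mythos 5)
Your proposal is correct and takes essentially the same approach as the paper: both are, at bottom, an outline of the argument of \cite[Subsection 4.6]{dat2022ihes} — open/closed decomposition via restriction to $I_F^{\ell}$, the orbit morphism presenting the locus $Z^1(W_F,\hat{G})_{[\psi_\ell]}$ as a quotient by $C_{\hat{G}}(\psi_\ell)$, then cutting down along the discrete $\pi_0$-invariant to obtain connectedness — with the genuine scheme-theoretic work delegated to Dat. The one small imprecision in your sketch is the claim that the second factor is connected ``along the lines of Example \ref{Example GL_1}'': the cocycle space of a torus over $\overline{\mathbb{Z}}_\ell$ is $T\times T^{\Fr=(-)^q}$ and is generally \emph{disconnected}; it is only the extra condition of trivial restriction to $I_F^{\ell}$ (as in Lemma \ref{Lem_Z^1()_1}) that pins down the identity component, which the paper handles later in the proof of Theorem \ref{Thm X}.
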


\begin{proof}
	This is proven in \cite[Subsection 4.6]{dat2022ihes}.\footnote{To apply \cite[Subsection 4.6]{dat2022ihes}, we need to ensure that the center of $H^0$ in Dat's notation is smooth over $\overline{\mathbb{Z}}_{\ell}$ so that we can find a lifting $\psi$ such that $\psi$ fixes a Borel pair of $H^0$. However, this is automatic in our case because here $H^0=N_{\hat{G}}(\psi_{\ell})^0$ will turn out to be a torus (see Lemma \ref{Lemma: T}).} As a rough outline, we first notice that $Z^1(W_F, \hat{G})_{[\psi_{\ell}]}$, the space of cocycles whose restriction to $I_F^{\ell}$ is $\hat{G}$-conjugate to $\psi_{\ell}$, is open and closed in $Z^1(W_F, \hat{G})$\footnote{This is done by considering the restriction map $Z^1(W_F, \hat{G}) \to Z^1(I_F^{\ell}, \hat{G})$, since we know the connected components of $Z^1(I_F^{\ell}, \hat{G})$ quite well, thanks to \cite[Theorem 4.2]{dat2022ihes}. Note that $I_F^{\ell}$ has pro-order prime to $\ell$, so that we can apply \cite[Theorem 4.2]{dat2022ihes}. This is the reason that we consider $I_F^{\ell}$, the maximal subgroup of $W_F$ with pro-order prime to $\ell$.}. Next, we notice that
	$g\eta(-)g^{-1} \mapsfrom (g, \eta)$
    defines an isomorphism
    $$Z^1(W_F, \hat{G})_{[\psi_{\ell}]} \cong \left(\hat{G} \times Z^1(W_F, \hat{G})_{\psi_{\ell}}\right)/C_{\hat{G}}(\psi_{\ell}),$$
    where $Z^1(W_F, \hat{G})_{\psi_{\ell}}$ is the space of cocycles whose restriction to $I_F^{\ell}$ is $\psi_{\ell}$. Thus, $\psi$ is contained in the open and closed subscheme $Z^1(W_F, \hat{G})_{[\psi_{\ell}]}$ of $Z^1(W_F, \hat{G})$. However, $Z^1(W_F, \hat{G})_{[\psi_{\ell}]}$ is usually not connected, since it maps to the discrete space 
    $$Z^1(W_F/P_F, \pi_0(N_{\hat{G}}(\psi_{\ell}))).$$
    Nevertheless, it turns out that this is the only obstruction for being connected, i.e., if we moreover consider the subspace of $Z^1(W_F, \hat{G})_{[\psi_{\ell}]}$ that consists of $L$-parameters whose image in $Z^1(W_F/P_F, \pi_0(N_{\hat{G}}(\psi_{\ell})))$ is $\hat{G}$-conjugate to $\overline{\psi}$, it becomes connected. Therefore, we obtain the desired formular \eqref{Equation: X_psi_dat_4.6}.
\end{proof}

To prepare for the main theorem of this subsection, let us do a lemma. We introduce the notation $T:=C_{\hat{G}}(\psi|_{I_F^{\ell}})$, which will be used throughout the rest of the chapter. $T$ has several equivalent definitions, as follows.

\begin{lemma}\label{Lemma: T}\
	\begin{enumerate}
		\item $T:=C_{\hat{G}}(\psi|_{I_F^{\ell}})$ is a split torus over $\overline{\mathbb{Z}}_{\ell}$. 
		\item The following equalities hold:
		\begin{equation}\label{Equation: T}
			C_{\hat{G}}(\psi|_{I_F^{\ell}}) = C_{\hat{G}}(\psi|_{I_F^{\ell}})^0 = N_{\hat{G}}(\psi|_{I_F^{\ell}})^0 = C_{\hat{G}}(\psi|_{I_F}).
		\end{equation}
		\item The schematic stabilizer $C_{\hat{G}}(\psi_{\ell})_{\overline{\psi}}$ of $\overline{\psi}$ in $C_{\hat{G}}(\psi_{\ell})$ is equal to $C_{\hat{G}}(\psi_{\ell})$.
	\end{enumerate}
\end{lemma}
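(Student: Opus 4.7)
The plan is to prove (1) first and then derive (2) and (3) from the connectedness of $T$ together with standard facts about tame inertia. The two key inputs are the generalized reductivity of centralizers of $\ell$-regular finite subgroups (Lemma \ref{Lem gen red}) and the absence of $\ell$-torsion in a torus over $\overline{\mathbb{F}}_{\ell}$.

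For (1), I would first observe that $\psi$ itself must be tame: $\psi(P_F)$ lands in the pro-$\ell$ kernel of the reduction $\hat{G}(\overline{\mathbb{Z}}_{\ell})\to\hat{G}(\overline{\mathbb{F}}_{\ell})$ because $\varphi$ is tame, while $P_F$ is pro-$p$ with $p\neq\ell$, forcing $\psi(P_F)=1$. Hence $\psi_{\ell}$ has finite image of order prime to $\ell$, and Lemma \ref{Lem gen red} yields that $T=C_{\hat{G}}(\psi_{\ell})$ is generalized reductive (in particular smooth) over $\overline{\mathbb{Z}}_{\ell}$. Since the schematic centralizer commutes with base change, $T_{\overline{\mathbb{F}}_{\ell}}=C_{\hat{G}}(\varphi|_{I_F^{\ell}})$. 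Now $S:=C_{\hat{G}}(\varphi|_{I_F})$ is a torus over $\overline{\mathbb{F}}_{\ell}$ by item \ref{regular semisimple} of Definition \ref{Def TRSELP}, and $\varphi(I_F)\subseteq S(\overline{\mathbb{F}}_{\ell})$ has no $\ell$-torsion (tori over $\overline{\mathbb{F}}_{\ell}$ have none); so $\varphi|_{I_F}$ factors through the maximal prime-to-$\ell$ quotient of $I_F/P_F$, which is exactly $I_F^{\ell}/P_F$, giving $\varphi(I_F)=\varphi(I_F^{\ell})$ and hence $T_{\overline{\mathbb{F}}_{\ell}}=S$. A generalized reductive group scheme over the connected base $\overline{\mathbb{Z}}_{\ell}$ with torus special fiber is a torus, by rigidity of component groups (étale, hence trivial everywhere since trivial on the special fiber) and of root data (locally constant, hence empty everywhere since empty on the special fiber). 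Finally, every torus over the strictly henselian ring $\overline{\mathbb{Z}}_{\ell}$ is split.

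For (2), the first equality $C_{\hat G}(\psi_{\ell})=C_{\hat G}(\psi_{\ell})^0$ is immediate from connectedness of $T$ obtained in (1). For the middle equality, the conjugation action of $N:=N_{\hat G}(\psi_{\ell})$ on the finite constant subgroup $\psi(I_F^{\ell})$ factors through its finite (constant) automorphism group scheme with kernel $T$, so $N/T$ is finite étale and $N^0=T^0=T$. For the last equality, tameness of $\psi$ plus abelianness of $I_F/P_F$ forces $\psi(I_F)$ to be an abelian subgroup of $\hat G(\overline{\mathbb{Z}}_{\ell})$ containing $\psi(I_F^{\ell})$, so $\psi(I_F)\subseteq T(\overline{\mathbb{Z}}_{\ell})$; commutativity of $T$ then yields $T\subseteq C_{\hat G}(\psi|_{I_F})$, with the reverse inclusion obvious from $I_F^{\ell}\subseteq I_F$.

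For (3), the $T$-action on $\overline{\psi}\in Z^1(W_F,\pi_0(N_{\hat G}(\psi_{\ell})))$ factors through the composite $T\hookrightarrow N_{\hat G}(\psi_{\ell})\twoheadrightarrow\pi_0(N_{\hat G}(\psi_{\ell}))$, which is trivial because $T$ is connected and the target is étale; hence $T$ acts trivially on $\overline{\psi}$ and the stabilizer exhausts $T$. The main obstacle, I anticipate, lies in (1), specifically in combining generalized reductivity with the torus structure on the special fiber to conclude that $T$ is a torus globally; once this is in place, (2) and (3) are formal consequences of connectedness of $T$ and abelianness of the tame inertia.
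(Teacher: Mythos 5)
Your proposal is correct and follows essentially the same route as the paper: generalized reductivity of $C_{\hat G}(\psi_\ell)$ via Lemma \ref{Lem gen red}, identification of the special fiber with $S=C_{\hat G(\overline{\mathbb{F}}_\ell)}(\varphi(I_F))$ using that $\varphi|_{I_F}$ factors through $I_F^\ell$, then rigidity over the strictly henselian base, and finally the commutativity/connectedness arguments for (2) and (3). You supply more self-contained detail in two places where the paper simply cites \cite[Subsection 3.1]{dat2022ihes} (the equality $N_{\hat G}(\psi_\ell)^0=C_{\hat G}(\psi_\ell)^0$, argued via the conjugation action on the finite group $\psi(I_F^\ell)$) and where the paper compresses "generalized reductive, hence split, hence determined by its special fiber" into one clause (your rigidity-of-component-groups-and-root-data argument), but the underlying idea is the same.
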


\begin{proof}\
	\begin{enumerate}
		\item By \cite[Subsection 3.1]{dat2022ihes}, the centralizer $C_{\hat{G}}(\psi_{\ell})$ is generalized reductive (see Lemma \ref{Lem gen red}), hence split over $\overline{\mathbb{Z}}_{\ell}$. Therefore, we can determine $C_{\hat{G}}(\psi_{\ell})$ by computing its $\overline{\mathbb{F}}_{\ell}$-points. Indeed,
		$$C_{\hat{G}}(\psi_{\ell})(\overline{\mathbb{F}}_{\ell})=C_{\hat{G}(\overline{\mathbb{F}}_{\ell})}(\varphi(I_F^\ell))=C_{\hat{G}(\overline{\mathbb{F}}_{\ell})}(\varphi(I_F)),$$
		where the last equality follows since $I_F/I_F^{\ell}$ does not contribute to the image of $\varphi$ (see Lemma \ref{Lem I_F^ell}). Therefore, since $S=C_{\hat{G}(\overline{\mathbb{F}}_{\ell})}(\varphi(I_F))$ is a torus by assumption \ref{regular semisimple} in Definition \ref{Def TRSELP}, $C_{\hat{G}}(\psi_{\ell})$ is a split torus over $\overline{\mathbb{Z}}_{\ell}$ with $\overline{\mathbb{F}}_{\ell}$-points $S$.
		\item We just proved that $C_{\hat{G}}(\psi|_{I_F^{\ell}})$ is a split torus over $\overline{\mathbb{Z}}_{\ell}$ (in particular, connected), hence the first equality in \eqref{Equation: T}. For the second equality, we refer to \cite[Subsection 3.1]{dat2022ihes}. It remains to show the last equality. We will show that $C_{\hat{G}}(\psi|_{I_F^{\ell}})=C_{\hat{G}}(\psi|_{I_F})$. We first note that $C_{\hat{G}}(\psi|_{I_F}) \subseteq C_{\hat{G}}(\psi|_{I_F^{\ell}})=:T$ is an inclusion to a commutative group scheme. Since $\psi(I_F)=\psi(I_F/P_F)$ is abelian, 
		$$\psi(I_F) \subseteq C_{\hat{G}}(\psi|_{I_F}) \subseteq T.$$
		Therefore, $C_{\hat{G}}(\psi|_{I_F}) \supseteq T$ since $T$ is commutative, hence
		$$C_{\hat{G}}(\psi|_{I_F}) = T.$$
		\item Recall that $C_{\hat{G}}(\psi_{\ell})$ acts on $N_{\hat{G}}(\psi_{\ell})$ by conjugation, inducing an action of $C_{\hat{G}}(\psi_{\ell})$ on $\pi_0(N_{\hat{G}}(\psi_{\ell}))$ by \eqref{Equation: T}, and hence an action on $Z^1(W_F, \pi_0(N_{\hat{G}}(\psi_{\ell})))$ by conjugation. In addition, $C_{\hat{G}}(\psi_{\ell})_{\overline{\psi}}$ is by definition the stabilizer of $\overline{\psi} \in Z^1(W_F, \pi_0(N_{\hat{G}}(\psi_{\ell})))$ in $C_{\hat{G}}(\psi_{\ell})$. Now $C_{\hat{G}}(\psi_{\ell})=T$ is connected, hence acts trivially on the component group $\pi_0(N_{\hat{G}}(\psi_{\ell}))$ and acts trivially on $Z^1(W_F, \pi_0(N_{\hat{G}}(\psi_{\ell})))$. Therefore, the stabilizer $C_{\hat{G}}(\psi_{\ell})_{\overline{\psi}}=C_{\hat{G}}(\psi_{\ell})$.
	\end{enumerate}
	
\end{proof}

\begin{theorem}\label{Thm X}
	Let $\varphi \in Z^1(W_F, \hat{G}(\overline{\mathbb{F}}_{\ell}))$ be a TRSELP over $\overline{\mathbb{F}}_{\ell}$. Let $\psi \in Z^1(W_F, \hat{G}(\overline{\mathbb{Z}}_{\ell}))$ be any lifting of $\varphi$. 
	Then the connected component $X_{\varphi}=X_{\psi}$ of $Z^1(W_F, \hat{G})_{\overline{\mathbb{Z}}_{\ell}}$ containing $\varphi$ is isomorphic to 
    $$\left(\hat{G} \times T \times \mu\right)/T,$$
	where $T=C_{\hat{G}}(\psi_{\ell})$, and 
	$\mu:=\left(T^{\Fr=(-)^q}\right)^0$ is the connected component of $T^{\Fr=(-)^q}$ containing $1$, where
	$$T^{\Fr=(-)^q}:=\{t \in T \;|\; \psi(\Fr)t\psi(\Fr)^{-1} \;=\; t^q\}$$
	is the subscheme of $T$ on which $\Fr$ acts (via $Ad(\psi)$) by raising to $q$-th power (see Equation \eqref{Equation: Z^1(W, T)}). Moreover, $\mu$ is a product of some $\mu_{\ell^{k_i}}$ (the group scheme of $\ell^{k_i}$-th roots of unity over $\overline{\mathbb{Z}}_{\ell}$), $k_i \in \mathbb{Z}_{\geq 0}$.\footnote{Note that $\mu$ can be trivial, depending on $\hat{G}$ and some congruence relations between $q, \ell$.} We will specify in the next subsection what the $T$-action on $\hat{G} \times T \times \mu$ is.

    \begin{proof}
    	We begin with an outline of the proof. The idea is to use the formula \eqref{Equation: X_psi_dat_4.6}. 
    	\begin{enumerate}
    		\item We show that $Z^1(W_F, N_{\hat{G}}(\psi_{\ell}))_{\psi_{\ell}, \overline{\psi}}$ is isomorphic to $Z^1_{Ad(\psi)}(W_F, N_{\hat{G}}(\psi_{\ell})^0)_{1_{I_F^{\ell}}}$, the space of cocycles (see (\textbf{Step 1}) below) whose restrictions to $I_F^{\ell}$ are trivial.
            \item We know by Lemma \ref{Lemma: T} that $N_{\hat{G}}(\psi_{\ell})^0=T$ is a torus, so we can compute \\$Z^1_{Ad(\psi)}(W_F/P_F, T)$ as in Example \ref{Example GL_1}.
            \item We compute $Z^1_{Ad(\psi)}(W_F, T)_{1_{I_F^{\ell}}}$ as the identity component of $Z^1_{Ad(\psi)}(W_F/P_F, T)$.
            \item We combine with Lemma \ref{Lemma: X} and \ref{Lemma: T} to conclude.
    	\end{enumerate}

    	(\textbf{Step 1}) We show that $\eta.\psi \mapsfrom \eta$ defines an isomorphism
    	$$Z^1(W_F, N_{\hat{G}}(\psi_{\ell}))_{\psi_{\ell}, \overline{\psi}} \cong Z^1_{Ad(\psi)}(W_F, N_{\hat{G}}(\psi_{\ell})^0)_{1_{I_F^{\ell}}}=:Z^1_{Ad(\psi)}(W_F, N_{\hat{G}}(\psi_{\ell})^0)_1,$$
    	where $Z^1_{Ad(\psi)}(W_F, N_{\hat{G}}(\psi_{\ell}))$ means the space of cocycles with $W_F$ acting on $N_{\hat{G}}(\psi_{\ell})$ via conjugacy action through $\psi$,\footnote{i.e., $W_F \times N_{\hat{G}}(\psi_{\ell}) \longrightarrow N_{\hat{G}}(\psi_{\ell}) \qquad (w, n) \mapsto {^wn}:=\psi(w)n\psi(w)^{-1}.$} 
    	and the subscript $1_{I_F^{\ell}}$ or $1$ means the cocycles whose restrictions to $I_F^{\ell}$ are trivial.\footnote{i.e.,
    	$Z^1_{Ad(\psi)}(W_F, N_{\hat{G}}(\psi_{\ell}))_{1}=\{\psi': W_F \to N_{\hat{G}}(\psi_{\ell}) \;|\; \psi'(w_1w_2)=\psi'(w_1)\left(^{w_1}\psi'(w_2)\right), \psi'|_{I_F^{\ell}}=1_{I_F^{\ell}}\}.$}
    	Indeed, this is clear by unraveling the definitions: two cocycles whose restriction to $I_F^\ell$ are both $\psi_{\ell}$ differ by something whose restriction to $I_F^{\ell}$ is trivial; two cocycles whose pushforward to $Z^1(W_F, \pi_0(N_{\hat{G}}(\psi_{\ell})))$ are both $\overline{\psi}$ differ by something whose pushforward to $Z^1(W_F, \pi_0(N_{\hat{G}}(\psi_{\ell})))$ is trivial, i.e., which factors through the identity component $N_{\hat{G}}(\psi_{\ell})^0$. The twisted action $Ad(\psi)$ occurs naturally when one tries to show that $\eta.\psi \mapsfrom \eta$ is well defined.
    	

    	(\textbf{Step 2}) As in Example \ref{Example GL_1}, we compute that
    	$$Z^1_{Ad(\psi)}(W_F/P_F, N_{\hat{G}}(\psi_{\ell})^0)=Z^1_{Ad(\psi)}(W_F/P_F, T) \cong T \times T^{\Fr=(-)^q},$$
    	where $T^{\Fr=(-)^q}$ is the subscheme of $T$ on which $\Fr$ acts (via $Ad(\psi)$) by raising to $q$-th power, see Equation \eqref{Equation: Z^1(W, T)}; where the last isomorphism is given by $\eta \mapsto (\eta(\Fr), \eta(s_0))$.
    	
    	(\textbf{Step 3}) We show that the identity component $(T^{\Fr=(-)^q})^0$ of $T^{\Fr=(-)^q}$ gives $\mu$ in the statement of the theorem. Note that $T^{\Fr=(-)^q}$ is a diagonalizable group scheme over $\overline{\mathbb{Z}}_{\ell}$ of dimension zero (this can be seen either by $\dim Z^1(W_F/P_F, T)=\dim T$, or by noticing that $\eta(s_0) \in T^{\Fr=(-)^q}$ is semisimple with finitely many possible eigenvalues), hence of the form $\prod_i\mu_{n_i}$ for some $n_i \in \mathbb{Z}_{\geq 0}$. Thus, its identity component $(T^{\Fr=(-)^q})^0$ over $\overline{\mathbb{Z}}_{\ell}$ is of the form $\prod_i\mu_{\ell^{k_i}},$ with $k_i$ maximal such that $\ell^{k_i}$ divides $n_i$. Therefore, 
    	\begin{equation}\label{Equation: Z^1_1}
    		Z^1_{Ad(\psi)}(W_F, N_{\hat{G}}(\psi_{\ell})^0)_1 \cong (T \times T^{\Fr=(-)^q})^0 \cong T \times (T^{\Fr=(-)^q})^0 \cong T \times \mu,
    	\end{equation}
    	(see Lemma \ref{Lem_Z^1()_1} for the first isomorphism) where $\mu$ is of the form $\prod_i\mu_{\ell^{k_i}}$.
    	
    	
    	(\textbf{Step 4})
    	Combining \eqref{Equation: Z^1_1} with Lemma \ref{Lemma: X} and Lemma \ref{Lemma: T}, we conclude that
        \begin{equation}
    		X_{\varphi} \cong (\hat{G} \times Z^1_{Ad(\psi)}(W_F, N_{\hat{G}}(\psi_{\ell})^0)_1)/C_{\hat{G}}(\psi_{\ell})_{\overline{\psi}} \cong (\hat{G} \times T \times \mu)/T.
        \end{equation}
    \end{proof}
\end{theorem}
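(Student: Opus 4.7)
The plan is to apply Lemma \ref{Lemma: X}, which already presents
\[
X_\psi \cong \bigl(\hat{G} \times Z^1(W_F, N_{\hat{G}}(\psi_\ell))_{\psi_\ell,\overline{\psi}}\bigr)/C_{\hat{G}}(\psi_\ell)_{\overline{\psi}},
\]
and then identify each piece using Lemma \ref{Lemma: T}. The latter lemma tells me that $C_{\hat{G}}(\psi_\ell) = N_{\hat{G}}(\psi_\ell)^0 =: T$ is already a split torus over $\overline{\mathbb{Z}}_\ell$, and that it coincides with its own $\overline{\psi}$-stabilizer because $T$ is connected. So the whole computation reduces to identifying the middle factor $Z^1(W_F, N_{\hat{G}}(\psi_\ell))_{\psi_\ell,\overline{\psi}}$ with $T \times \mu$.

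First I would translate the constrained cocycle space into twisted cocycles valued in $T$ via the assignment $\eta \mapsto \eta \cdot \psi$, i.e.\ $w \mapsto \eta(w)\psi(w)$. The cocycle identity for $\eta \cdot \psi$ translates into the cocycle relation for $\eta$ with respect to the conjugation action $\mathrm{Ad}(\psi)$, and the two constraints on the right-hand side become: $\eta|_{I_F^\ell}$ is trivial (since both sides agree with $\psi_\ell$ on $I_F^\ell$) and $\eta$ lands in $N_{\hat{G}}(\psi_\ell)^0 = T$ (since both sides have image $\overline{\psi}$ in $\pi_0$). This should produce an isomorphism of schemes onto $Z^1_{\mathrm{Ad}(\psi)}(W_F, T)_{1}$, the space of twisted $T$-valued cocycles trivial on $I_F^\ell$.

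Next I would compute this space using the explicit presentation \eqref{Equation presentation of the tame Weil group} of the tame Weil group by generators $\Fr, s_0$ with the single relation $\Fr s_0 \Fr^{-1} = s_0^q$. Evaluation $\eta \mapsto (\eta(\Fr), \eta(s_0))$, exactly as in Example \ref{Example GL_1}, identifies $Z^1_{\mathrm{Ad}(\psi)}(W_F/P_F, T) \cong T \times T^{\Fr=(-)^q}$, with $\Fr$ acting on $T$ through $\mathrm{Ad}(\psi(\Fr))$. Imposing triviality on the prime-to-$\ell$ inertia cuts out the identity component of the second factor (since $s_0$ generates $I_t$ and the condition on $I_F^\ell$ forces $\eta(s_0)$ to have $\ell$-power order), yielding $T \times (T^{\Fr=(-)^q})^0 =: T \times \mu$.

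Finally I would pin down the structure of $\mu$. The subscheme $T^{\Fr=(-)^q} \subseteq T$ is the equalizer of two endomorphisms of the split torus $T$, hence a diagonalizable group scheme over $\overline{\mathbb{Z}}_\ell$; a dimension count (or the observation that $\eta(s_0)$ is semisimple with only finitely many admissible eigenvalues) shows it is zero-dimensional, so it has the form $\prod_i \mu_{n_i}$. Over the strictly henselian base $\overline{\mathbb{Z}}_\ell$ of residue characteristic $\ell$, its identity component is then the $\ell$-part $\prod_i \mu_{\ell^{k_i}}$, with $k_i$ maximal such that $\ell^{k_i} \mid n_i$. The step I expect to be most delicate is the first: verifying that the twist $\eta \mapsto \eta \cdot \psi$ is a scheme-theoretic (not merely pointwise) isomorphism and that the $\psi_\ell$- and $\overline{\psi}$-constraints together truly force the values to land in the identity component $T$ rather than the whole normalizer.
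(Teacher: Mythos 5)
Your route matches the paper's proof step for step: apply Lemma \ref{Lemma: X}, twist by $\psi$ to turn the constrained cocycle space into $Z^1_{\mathrm{Ad}(\psi)}(W_F,T)_1$, evaluate on the generators $\Fr$ and $s_0$ to get $T \times T^{\Fr=(-)^q}$, and then isolate the identity component $\mu=(T^{\Fr=(-)^q})^0$ and read off its structure as a product of $\mu_{\ell^{k_i}}$'s. The one point you leave informal --- that imposing triviality on $I_F^\ell$ cuts out \emph{scheme-theoretically} exactly the identity component of $T\times T^{\Fr=(-)^q}$ --- is precisely what the paper isolates in Lemma \ref{Lem_Z^1()_1}, proved by combining connectedness of $Z^1_{\mathrm{Ad}(\psi)}(W_F,T)_1$ with the open-and-closed property of the restriction-to-$I_F^\ell$ locus, both drawn from \cite{dat2022ihes}.
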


\begin{eg}\label{Example: GL_2}
	
    Let $p=q=11, \ell=5, G=GL_2$.\footnote{They are chosen such that $\mu$ turns out to be non-trivial.} Let $F_2$ be the unique degree $2$ unramified extension of $F$. Then the Weil group of $F_2$ is $W_{F_2} \cong I_F \rtimes \left<\Fr^2\right>$.
    
    We define a tame character $\eta: W_{F_2}/P_F \to \overline{\mathbb{F}}_{\ell}^*$ as follows. It suffices to define $\eta$ on $I_F/P_F$ and $\left<\Fr^2\right>$ respectively.
    Let 
    $$\eta|_{I_F/P_F}: I_F/P_F \cong \prod_{p'\neq 11}\mathbb{Z}_{p'} \to \mathbb{Z}_3 \to \mathbb{Z}/3\mathbb{Z} \to \overline{\mathbb{F}}_{5}^*$$
    be the composition, where the last map is a non-trivial character $\chi: \mathbb{Z}/3\mathbb{Z} \to \overline{\mathbb{F}}_{5}^*$. Let $\eta(\Fr^2):=1$.
    
    Let $\varphi:=\Ind_{W_{F_2}}^{W_F}\eta$. $\varphi \in Z^1(W_F, \hat{G}(\overline{\mathbb{F}}_{\ell}))$ is an irreducible tame $L$-parameter, hence a TRSELP of $G=GL_2$. 
    
    To compute the connected component of $Z^1(W_F, \hat{G})$ containing $\varphi$ over $\overline{\mathbb{Z}}_{\ell}$, let us choose a lift $\psi$ of $\varphi$, as follows. First, let us define a lift $\tilde{\eta}: W_{F_2}/P_F \to \overline{\mathbb{Z}}_{\ell}^*$ of $\eta$, as follows. Let 
    $$\tilde{\eta}|_{I_F/P_F}: I_F/P_F \cong \prod_{p'\neq 11}\mathbb{Z}_{p'} \to \mathbb{Z}_3 \to \mathbb{Z}/3\mathbb{Z} \to \overline{\mathbb{Z}}_{5}^*$$
    be the composition, where the last map is a non-trivial character $\tilde{\chi}: \mathbb{Z}/3\mathbb{Z} \to \overline{\mathbb{Z}}_{5}^*$ lifting $\chi$. Let $\tilde{\eta}(\Fr^2):=1$. Next, define $\psi:=\Ind_{W_{F_2}}^{W_F}\tilde{\eta}$.
    
    Under a nice basis, we can express $\psi: W_F \to GL_2(\overline{\mathbb{Z}}_{\ell})$ in terms of matrices, as follows:
    $$\psi(s_0)=
    \begin{pmatrix}
    	\tilde{\chi}(1) & 0 \\
    	0 & \tilde{\chi}^q(1) \\
    \end{pmatrix}
    =
    \begin{pmatrix}
    	\zeta_3 & 0 \\
    	0 & \zeta_3^2 \\
    \end{pmatrix}\qquad 
    \psi(\Fr)=
    \begin{pmatrix}
    	0 & 1 \\
    	\tilde{\eta}(\Fr^2) & 0 \\
    \end{pmatrix}
    =
    \begin{pmatrix}
    	0 & 1 \\
    	1 & 0 \\
    \end{pmatrix},
    $$
    where $\zeta_3$ is a primitive $3$-rd root of unity of $\overline{\mathbb{Z}}_{\ell}$.
    
    Recall that
    $$X_{\varphi} \cong (\hat{G} \times Z^1_{Ad(\psi)}(W_F, N_{\hat{G}}(\psi_{\ell})^0)_1)/C_{\hat{G}}(\psi_{\ell})_{\overline{\psi}}.$$
    
    We see that 
    $$\psi(I_F^{\ell})=\left\{
    \begin{pmatrix}
    	1 & 0 \\
    	0 & 1 \\
    \end{pmatrix},
    \begin{pmatrix}
    	\zeta_3 & 0 \\
    	0 & \zeta_3^2 \\
    \end{pmatrix},
    \begin{pmatrix}
    	\zeta_3^2 & 0 \\
    	0 & \zeta_3 \\
    \end{pmatrix}
    \right\}.$$ 
    In this case, $T=C_{\hat{G}}(\psi_{\ell})$ is the diagonal torus of $GL_2$, $N_{\hat{G}}(\psi_{\ell})$ is the normalizer of $T$, $N_{\hat{G}}(\psi_{\ell})^0=T$, and $C_{\hat{G}}(\psi_{\ell})_{\overline{\psi}}=T$ since $T=C_{\hat{G}}(\psi_{\ell})$ fixes $\overline{\psi}$.
    
    It remains to compute $Z^1_{Ad(\psi)}(W_F, N_{\hat{G}}(\psi_{\ell})^0)_1 \cong Z^1_{Ad(\psi)}(W_F, T)_1$. We first compute $Z^1_{Ad(\psi)}(W_F/P_F, T)$ (without the subscript $1$). Indeed, by Equation \eqref{Equation: Z^1(W, T)}, 
    $$Z^1_{Ad(\psi)}(W_F/P_F, T) \cong T \times T^{\Fr=(-)^q},$$
    where 
    $$T^{\Fr=(-)^q}=\{x \in T \;|\; \psi(\Fr)x\psi(\Fr)^{-1}=x^q\}
    =\left\{\begin{pmatrix}
    	t & 0 \\
    	0 & t^q \\
    \end{pmatrix} \;|\; t^{q^2-1}=1\right\}
    \cong \mu_{q^2-1}=\mu_{120}.$$
    
    $Z^1_{Ad(\psi)}(W_F, N_{\hat{G}}(\psi_{\ell})^0)_1$ turns out to be the connected component of $Z^1_{Ad(\psi)}(W_F/P_F, T) \cong T \times T^{\Fr=(-)^q}$ containing $1$. In our case,
    $$Z^1_{Ad(\psi)}(W_F, N_{\hat{G}}(\psi_{\ell})^0)_1 \cong (T \times \mu_{120})^0 \cong T \times \mu_5.$$
    
    Above all, 
    $$X_{\varphi} \cong (\hat{G} \times T \times \mu_5)/T,$$
    where $T$ is the diagonal torus of $\hat{G}=GL_2$.

\end{eg}

\subsection{The $T$-action on $\hat{G} \times T \times \mu$}\label{Subsection T-action}

We continue with the notations from the last subsection. Recall that $T:=C_{\hat{G}}(\psi|_{I_F^{\ell}})$ and $\mu:=\left(T^{\Fr=(-)^q}\right)^0$. The goal of this subsection is to specify the $T$-action on $(\hat{G} \times T \times \mu)$. 




First, recall (see \cite[Subsection 4.6]{dat2022ihes}) that the component $X_{\varphi}=X_{\psi}$ consists of the $L$-parameters $\psi'$ such that $(\psi'_{\ell}, \overline{\psi'})$ is $\hat{G}$-conjugate to $(\psi_{\ell}, \overline{\psi})$. Hence $X_{\varphi}$ is isomorphic to 
$$(\hat{G} \times Z^1(W_F, N_{\hat{G}}(\psi_{\ell}))_{\psi_{\ell}, \overline{\psi}})/C_{\hat{G}}(\psi_{\ell})_{\overline{\psi}}$$
via $g\eta(-)g^{-1} \mapsfrom (g, \eta)$, with $C_{\hat{G}}(\psi_{\ell})_{\overline{\psi}}$ acting on $\hat{G} \times Z^1(W_F, N_{\hat{G}}(\psi_{\ell}))_{\psi_{\ell}, \overline{\psi}}$ by 
$$(t, (g, \psi')) \mapsto (gt^{-1}, t\psi'(-)t^{-1}),$$
where $t \in C_{\hat{G}}(\psi_{\ell})_{\overline{\psi}} \cong T$ and $(g, \psi') \in \hat{G} \times Z^1(W_F, N_{\hat{G}}(\psi_{\ell}))_{\psi_{\ell}, \overline{\psi}}$.

Next, recall that $\eta.\psi \mapsfrom \eta \mapsto (\eta(\Fr), \eta(s_0))$ induces isomorphisms
\begin{equation}\label{Equation: Z^1_Ad}
	Z^1(W_F, N_{\hat{G}}(\psi_{\ell}))_{\psi_{\ell}, \overline{\psi}} \cong Z^1_{Ad(\psi)}(W_F, N_{\hat{G}}(\psi_{\ell})^0)_1 \cong T \times \mu.
\end{equation}
Therefore, 
$$X_{\varphi} \cong (\hat{G} \times Z^1_{Ad(\psi)}(W_F, N_{\hat{G}}(\psi_{\ell})^0)_1)/C_{\hat{G}}(\psi_{\ell})_{\overline{\psi}} \cong (\hat{G} \times T \times \mu)/T.$$

For clarification, let us denote $T$ by $T_1$ when we consider $T$ as $C_{\hat{G}}(\psi_{\ell})_{\overline{\psi}}$, and denote $T$ by $T_2$ when we consider $T$ as a summand of $Z^1_{Ad(\psi)}(W_F, N_{\hat{G}}(\psi_{\ell})^0)_1$ (corresponding to the image of $\Fr$) via \eqref{Equation: Z^1_Ad}. We are going to make explicit the $T_1$-action on $(\hat{G} \times T_2 \times \mu)$.

Let us focus on the isomorphism $\eta.\psi \mapsfrom \eta$:
$$Z^1(W_F, N_{\hat{G}}(\psi_{\ell}))_{\psi_{\ell}, \overline{\psi}} \cong Z^1_{Ad(\psi)}(W_F, N_{\hat{G}}(\psi_{\ell})^0)_1.$$
Recall that $T_1 \subseteq \hat{G}$ acts on $Z^1(W_F, N_{\hat{G}}(\psi_{\ell}))_{\psi_{\ell}, \overline{\psi}}$ by conjugation. Hence, \eqref{Equation: Z^1_Ad} induces an $T_1$-action on $Z^1_{Ad(\psi)}(W_F, N_{\hat{G}}(\psi_{\ell})^0)_1$:
\begin{equation}\label{Equation: twisted conjugation}
	(t, \eta) \mapsto \left(t(\eta(-)\psi(-)) t^{-1}\right)\psi(-)^{-1}.
\end{equation}

Hence in $(\hat{G} \times T_2 \times \mu)/T_1$, we compute by tracking the isomorphisms \eqref{Equation: Z^1_Ad} that 
\begin{enumerate}
	\item $T_1$ acts on $\hat{G}$ via $(t, g) \mapsto gt^{-1}$.
	\item $T_1$ acts on $T_2 \subseteq (T_2 \times \mu)$ (corresponding to $\eta(\Fr)$) by twisted conjugacy (due to the isomorphisms $\eta.\psi \mapsfrom \eta \mapsto (\eta(\Fr), \eta(s_0))$), i.e., 
	$$(t, t') \mapsto \left(t(t'n)t^{-1}\right)n^{-1}=tt'(nt^{-1}n^{-1})=t(nt^{-1}n^{-1})t'=(tnt^{-1}n^{-1})t',$$
	where $n=\psi(\Fr)$; Note that $n$, a prior lies in $\hat{G}$, actually lies in $N_{\hat{G}}(T)$ (since $\Fr.s.\Fr^{-1}=s^q$ implies that $\psi(\Fr)$ normalizes $C_{\hat{G}}(\psi|_{I_F^{\ell}})=T$). To summarize, $t \in T_1$ acts on $T_2$ via multiplication by $tnt^{-1}n^{-1}$.
	\item $T_1$ acts trivially on $\mu \subseteq (T_2 \times \mu)$ (corresponding to $\eta(s_0)$). Indeed, $\eta(s_0) \in T$ and $\psi(s_0) \in T$. Therefore, the twisted (by $\psi$) conjugacy action \eqref{Equation: twisted conjugation} of $T_1$ on $\mu$ is trivial.
\end{enumerate}

We summarize the above into the following commutative diagram, where $\psi'=\eta\psi$, $(t', x)=(\eta(\Fr), \eta(s_0))$, and $^ab:=aba^{-1}$.


\begin{tikzcd}
	{Z^1(W_F, N_{\hat{G}}(\psi_{\ell}))_{\psi_{\ell}, \overline{\psi}}} & {Z^1(W_F, N_{\hat{G}}(\psi_{\ell}))_{\psi_{\ell}, \overline{\psi}}} & {\psi' } & {t\psi'(-)t^{-1}} \\
	\\
	{Z^1_{Ad(\psi)}(W_F, N_{\hat{G}}(\psi_{\ell})^0)_1} & {Z^1_{Ad(\psi)}(W_F, N_{\hat{G}}(\psi_{\ell})^0)_1} & {\eta } & {t\eta(-)({^{\psi(-)}t^{-1}})} \\
	\\
	{T \times \mu} & {T \times \mu} & {(t',x)} & {(tt'(^{\psi(\Fr)}t^{-1}), x)}
	\arrow[from=1-2, to=3-2]
	\arrow[from=3-2, to=5-2]
	\arrow[maps to, from=1-4, to=3-4]
	\arrow[maps to, from=3-4, to=5-4]
	\arrow[maps to, from=1-3, to=1-4]
	\arrow[maps to, from=3-3, to=3-4]
	\arrow[maps to, from=5-3, to=5-4]
	\arrow[maps to, from=1-3, to=3-3]
	\arrow[maps to, from=3-3, to=5-3]
	\arrow[from=1-1, to=3-1]
	\arrow[from=3-1, to=5-1]
	\arrow[from=1-1, to=1-2]
	\arrow[from=3-1, to=3-2]
	\arrow[from=5-1, to=5-2]
\end{tikzcd}

On the other hand, recall that we have the natural $\hat{G}$-action on $Z^1(W_F, \hat{G})$ by conjugation, hence the $\hat{G}$-action on this component $X_{\varphi}$. Under the isomorphism $X_{\varphi} \cong (\hat{G} \times T_2 \times \mu)/T_1$, the $\hat{G}$-action becomes
$$(g', (g, t, m)) \mapsto  (g'g, t, m), \text{ for any } g' \in \hat{G} \text{ and } (g, t, m) \in (\hat{G} \times T_2 \times \mu)/T_1.$$

Note that the $T_1$-action and the $\hat{G}$-action on $(\hat{G} \times T_2 \times \mu)$ commute with each other; we thus have the following isomorphisms of quotient stacks:\footnote{Since we have specified the action of $T_1=T$ on $T_2=T$, we go back to the notation $T$ in the statement of Proposition \ref{Proposition: T times mu/T}.}
$$X_{\varphi}/\hat{G} \cong \left((\hat{G} \times T \times \mu)/T\right)/\hat{G} \cong \left((\hat{G} \times T \times \mu)/\hat{G}\right)/T \cong (T \times \mu)/T,$$ 
with $t \in T$ acting on $T$ via multiplication by $tnt^{-1}n^{-1}$, where $n=\psi(\Fr)$, and $t \in T$ acting trivially on $\mu$. Therefore, we have the following Proposition.

\begin{proposition}\label{Proposition: T times mu/T} We have the following isomorphisms of (quotient) stacks
	\begin{equation}\label{Equation: T/T}
		X_{\varphi}/\hat{G} \cong (T \times \mu)/T \cong [T/T] \times \mu,
	\end{equation}
	with $t \in T$ acting on $T$ via multiplication by $tnt^{-1}n^{-1}$, where $n=\psi(\Fr)$.
\end{proposition}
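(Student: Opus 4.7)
The plan is to unpack Theorem \ref{Thm X} together with the explicit actions described in Subsection \ref{Subsection T-action} and show that passing to the double quotient collapses the $\hat{G}$-factor. First I would record the two commuting actions on the product $\hat{G}\times T\times\mu$: the $T_1$-action produced by Lemma \ref{Lemma: X} (translating $\hat{G}$ on the right and acting on $T_2\times\mu$ by twisted $Ad(\psi)$-conjugation) and the $\hat{G}$-action inherited from the conjugation action on $Z^1(W_F,\hat{G})$, which in these coordinates becomes left multiplication $(g',(g,t,m))\mapsto(g'g,t,m)$ as noted in the paragraph preceding the statement. Since left and right translations on $\hat{G}$ commute, and since the $\hat{G}$-action is trivial on the $T_2\times\mu$ factor, the two actions commute on the nose, so the order of quotients can be exchanged.

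Next I would exploit the commutativity to write
\begin{equation*}
X_{\varphi}/\hat{G}\;\cong\;\bigl((\hat{G}\times T\times\mu)/T\bigr)/\hat{G}\;\cong\;\bigl((\hat{G}\times T\times\mu)/\hat{G}\bigr)/T.
\end{equation*}
The $\hat{G}$-action by left multiplication on the first factor is free, with obvious section $\{1\}\times T\times\mu\hookrightarrow \hat{G}\times T\times\mu$, so the inner quotient is simply $T\times\mu$. Thus $X_{\varphi}/\hat{G}\cong (T\times\mu)/T$, with the residual $T$-action being the one inherited from $T_1$ on $T_2\times\mu$.

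Finally I would import the explicit form of that residual action from the trichotomy in Subsection \ref{Subsection T-action}: $t\in T$ acts on the $T$-factor by multiplication by $tnt^{-1}n^{-1}$ with $n=\psi(\Fr)$, and trivially on $\mu$. Because the $\mu$-factor carries the trivial action, the quotient splits as a product of stacks,
\begin{equation*}
(T\times\mu)/T\;\cong\;[T/T]\times\mu,
\end{equation*}
with the twisted conjugacy action on $T$ as stated. The main thing to be careful about is the verification that the $\hat{G}$-action computed on the quotient $(\hat{G}\times T\times\mu)/T$ really lifts to the commuting left-multiplication action on the unquotiented product $\hat{G}\times T\times\mu$; once the compatibility of left and right translation on $\hat{G}$ is noted, and the fact that the $T_1$-action on $T_2$ and $\mu$ does not involve the $\hat{G}$-coordinate, the rest is formal manipulation of quotient stacks.
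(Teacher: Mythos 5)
Your proof is correct and follows essentially the same route as the paper: you record the commuting $T$- and $\hat{G}$-actions on $\hat{G}\times T\times\mu$, swap the order of quotients, collapse the free left-translation $\hat{G}$-action on the first factor, and split off the $\mu$-factor once you observe the residual $T$-action on it is trivial. The only small addition is your explicit remark that left multiplication by $\hat{G}$ on itself is free with section $\{1\}\times T\times\mu$, which the paper leaves implicit but is a harmless clarification.
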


\subsection{Some lemmas}

\begin{lemma}\label{Lem generalizing}
	Let $\varphi' \in Z^1(W_t, \hat{G}(\overline{\mathbb{F}}_{\ell}))$. Then there exists $\psi' \in Z^1(W_t, \hat{G}(\overline{\mathbb{Z}}_{\ell}))$ such that $\psi'$ is a lift of $\varphi'$.
\end{lemma}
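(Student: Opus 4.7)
My plan is to reduce the statement, via the discretization isomorphism $Z^1(W_t, \hat{G}) \cong Z^1(W_t^0, \hat{G})$ and the explicit presentation \eqref{Equation space of tame cocycle}, to the following concrete lifting problem: given a pair $(x_0, y_0) \in \hat{G}(\overline{\mathbb{F}}_\ell)^2$ with $y_0 x_0 y_0^{-1} = x_0^q$, construct a lift $(x, y) \in \hat{G}(\overline{\mathbb{Z}}_\ell)^2$ satisfying the same relation. The first step is to lift $y_0$ to an arbitrary $y \in \hat{G}(\overline{\mathbb{Z}}_\ell)$, which is possible because $\hat{G}$ is smooth over $\overline{\mathbb{Z}}_\ell$, so Hensel's lemma gives a surjection $\hat{G}(\overline{\mathbb{Z}}_\ell) \twoheadrightarrow \hat{G}(\overline{\mathbb{F}}_\ell)$. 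It then remains to find an $\overline{\mathbb{Z}}_\ell$-point of the closed subscheme $Y_y := \{x \in \hat{G} : y x y^{-1} = x^q\} \subseteq \hat{G}_{\overline{\mathbb{Z}}_\ell}$ lifting $x_0$.

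For the second step I would exploit the Jordan decomposition $x_0 = s_0 u_0$, with $s_0$ semisimple, $u_0$ unipotent, and $s_0 u_0 = u_0 s_0$. The uniqueness of the decomposition and the cocycle relation force $y_0 s_0 y_0^{-1} = s_0^q$ and $y_0 u_0 y_0^{-1} = u_0^q$ separately. The eigenvalues of $s_0$ in any faithful representation of $\hat{G}$ are permuted by $\lambda \mapsto \lambda^q$ in finite orbits, hence are roots of unity, so $s_0$ has finite order $m$ coprime to $\ell$ (since $\overline{\mathbb{F}}_\ell^\times$ has no $\ell$-torsion). Pick a maximal torus $T_0$ of $\hat{G}_{\overline{\mathbb{F}}_\ell}$ containing $s_0$; since $\hat{G}$ is split over $\overline{\mathbb{Z}}_\ell$, $T_0$ is the reduction of a maximal torus $T \subseteq \hat{G}_{\overline{\mathbb{Z}}_\ell}$, and $s_0$ lifts canonically to $s \in T(\overline{\mathbb{Z}}_\ell)$ of order $m$ via the bijection $\mu_m(\overline{\mathbb{Z}}_\ell) \xrightarrow{\sim} \mu_m(\overline{\mathbb{F}}_\ell)$ valid for $\gcd(m, \ell) = 1$. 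A Hensel-type modification of $y$ inside the (formally smooth) kernel $\ker(\hat{G}(\overline{\mathbb{Z}}_\ell) \to \hat{G}(\overline{\mathbb{F}}_\ell))$, exploiting the smoothness of the conjugation action at the semisimple point $s$, then arranges $y s y^{-1} = s^q$. Finally, the unipotent factor $u_0$ is lifted inside the smooth reductive centralizer $C_{\hat{G}}(s)$, on which $y$ induces an automorphism (since $C_{\hat{G}}(s) = C_{\hat{G}}(s^q)$ as $\gcd(m, q) = 1$), by an analogous Hensel argument for the equation $y u y^{-1} = u^q$; setting $x = s u$ produces the desired lift.

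The hard part will be executing the successive Hensel lifts: at each stage, one must verify that the conjugation-type map is smooth at the point being lifted, which reduces to a surjectivity check of the form $\mathrm{Ad}(y) - q \cdot \mathrm{Id}$ on the relevant Lie subalgebra, and one must coordinate the three simultaneous conditions $ysy^{-1} = s^q$, $y u y^{-1} = u^q$, and $s u = u s$. Isolating the semisimple part first and then working inside the smooth reductive centralizer $C_{\hat{G}}(s)$ is designed to decouple these. A more conceptual alternative, closer to the spirit of the moduli theory in \cite{dhkm2020moduli}, would appeal directly to the flatness of $Z^1(W_t, \hat{G})_{\overline{\mathbb{Z}}_\ell}$ together with a general lifting principle for flat, finitely presented schemes over the strictly henselian valuation ring $\overline{\mathbb{Z}}_\ell$: since its fraction field $\overline{\mathbb{Q}}_\ell$ is algebraically closed, one expects $X(\overline{\mathbb{Z}}_\ell) \to X(\overline{\mathbb{F}}_\ell)$ to be surjective for any such $X$, which would yield the lemma at once.
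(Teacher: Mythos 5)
Your closing paragraph is, in essence, the paper's proof. The paper invokes \cite[Proposition 3.3]{dat2022ihes} for flatness of $Z^1(W_t,\hat{G})_{\overline{\mathbb{Z}}_\ell}$, deduces from ``flat $\Rightarrow$ generalizing'' that there is a point of the generic fiber specializing to $\varphi'$, realizes it as an honest $\overline{\mathbb{Q}}_\ell$-point $\xi$, and then finishes with a short valuation argument: for an affine $\overline{\mathbb{Z}}_\ell$-scheme $Y=\Spec R$, any $\overline{\mathbb{Q}}_\ell$-point $y_\eta$ with $\ker(y_\eta)\subseteq\ker(y_s)$ for some $\overline{\mathbb{F}}_\ell$-point $y_s$ automatically factors through $\overline{\mathbb{Z}}_\ell$. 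Indeed if some $f\in R$ had $y_\eta(f)=\ell^{-m}u$ with $m\geq1$ and $u\in\overline{\mathbb{Z}}_\ell^\times$, then $\ell^m u^{-1}f - 1\in\ker(y_\eta)\subseteq\ker(y_s)$, but $\ell^m u^{-1}f\in\ell R\subseteq\ker(y_s)$, forcing $1\in\ker(y_s)$. So your ``general lifting principle'' for flat affine finite-type $\overline{\mathbb{Z}}_\ell$-schemes is correct, but it is not entirely ``at once'': the two ingredients are exactly (a) generalization from flatness and (b) the valuation-theoretic observation above, and both are needed. It is worth being aware of a mild subtlety in (a): the generalization produced abstractly is a scheme-theoretic point whose residue field need not a priori be $\overline{\mathbb{Q}}_\ell$; one uses finite type over $\overline{\mathbb{Z}}_\ell$ and that $\overline{\mathbb{Q}}_\ell$ is algebraically closed to replace it by a closed point of the generic fiber still specializing to $\varphi'$.

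Your primary route, via Hensel lifting through the Jordan decomposition, is a genuinely different strategy and is morally sound, but the gaps you flag are real and not small. To lift $y_0$ with $y s y^{-1}=s^q$ you need the twisted-conjugation locus $Y_s:=\{y\in\hat{G}:ysy^{-1}=s^q\}$ to be $\overline{\mathbb{Z}}_\ell$-smooth at the given $\overline{\mathbb{F}}_\ell$-point; this in turn requires that the conjugacy orbit of $s$ be a smooth locally closed subscheme over $\overline{\mathbb{Z}}_\ell$ and that $s^q$ land in it, and one has to know $C_{\hat G}(s)$ is smooth (here the key finiteness you need is that $s$ has finite order prime to \emph{both} $p$ and $\ell$, not just $\ell$ — fortunately both hold since the eigenvalues are $(q^N-1)$-st roots of unity). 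A second round of the same analysis is needed inside $C_{\hat G}(s)$ for the unipotent part; there the relevant smoothness/transversality check is that $\operatorname{Ad}(y_0)-q\cdot\mathrm{Id}$ has controlled behavior on the appropriate Lie algebra, and over $\overline{\mathbb{Z}}_\ell$ one must also be careful about whether $q$-th power maps on unipotents are what one hopes when $p$ is not invertible (it is in $\overline{\mathbb{Z}}_\ell$, so this is fine, but it needs saying). All this is doable, but it is substantially more work than the one-paragraph flatness argument the paper uses, and what it buys you is essentially an independent reproof of the case of \cite[Proposition 3.3]{dat2022ihes} that you need. Given that the paper is happy to cite that flatness result, its route is the right one here; I would lead with that and relegate the explicit Hensel unwinding to an aside or drop it.
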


\begin{proof}
	In the statement, $Z^1(W_t, \hat{G})$ is the abbreviation for $Z^1(W_t, \hat{G})_{\overline{\mathbb{Z}}_{\ell}}$. Recall that $Z^1(W_t, \hat{G}) \to \overline{\mathbb{Z}}_{\ell}$ is flat (see \cite[Proposition 3.3]{dat2022ihes}), hence generalizing (see \cite[Stack, Tag 01U2]{stacks-project}). Therefore, given $\varphi' \in Z^1(W_t, \hat{G}(\overline{\mathbb{F}}_{\ell}))$, there exists $\xi \in Z^1(W_t, \hat{G}(\overline{\mathbb{Q}}_{\ell}))$ such that $\xi$ specializes to $\varphi'$. In other words, $\ker(\xi) \subseteq \ker(\varphi')$. We will show that $\xi: W_t \to \hat{G}(\overline{\mathbb{Q}}_{\ell})$ factors through  $\hat{G}(\overline{\mathbb{Z}}_{\ell})$.
	
	This is true by the following more general statement: Let $Y=\Spec(R)$ be an affine scheme over $\overline{\mathbb{Z}}_{\ell}$, let $y_{\eta} \in Y(\overline{\mathbb{Q}}_{\ell})$ specializing to $y_s \in Y(\overline{\mathbb{F}}_{\ell})$.  Then, $y_{\eta} \in Y(\overline{\mathbb{Q}}_{\ell})=\Hom(R, \overline{\mathbb{Q}}_{\ell})$ factors through $\overline{\mathbb{Z}}_{\ell}$.
	
    To prove the above statement, let $\mathfrak{p}:=\ker(y_\eta)$ and $\mathfrak{q}:=\ker(y_s)$ be the corresponding prime ideals. Then ``$y_{\eta}$ specializes to $y_s$" translates to ``$\mathfrak{p} \subseteq \mathfrak{q}$". Recall that we are going to show that $y_{\eta}: R \to \overline{\mathbb{Q}}_{\ell}$ factors through $\overline{\mathbb{Z}}_{\ell}$. We argue by contradiction. Otherwise there is some element $f \in R$ mapping to $\ell^{-m}u$ for some $m \in \mathbb{Z}_{\geq 1}$ and $u \in \overline{\mathbb{Z}}_{\ell}^*$. Hence 
    \begin{equation}\label{eq ell}
    	\ell^mu^{-1}f-1 \in \ker(y_{\eta}) \subseteq \ker(y_s).
    \end{equation}
    However, $\ell \in \ker(y_s)$ since $y_s \in Y(\overline{\mathbb{F}}_{\ell})$. This together with equation \eqref{eq ell} implies that $1 \in \ker(y_s)$. Contradiction!
\end{proof}

\begin{lemma}\label{Lem gen red}
	The schematic centralizer $C_{\hat{G}}(\psi_{\ell})$ is a generalized reductive group scheme over $\overline{\mathbb{Z}}_{\ell}$.
\end{lemma}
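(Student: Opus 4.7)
The plan is to reduce to the statement that the centralizer of a finite linearly reductive subgroup scheme of a reductive group scheme is generalized reductive, which is the content of \cite[Subsection 3.1]{dat2022ihes} (and ultimately goes back to results in \cite{conrad2014reductive} on fixed-point functors for reductive group schemes).

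First I would observe that $\psi_{\ell} := \psi|_{I_F^{\ell}}$ factors through a finite quotient of $I_F^{\ell}$. Indeed, $\psi$ is continuous and $\hat{G}(\overline{\mathbb{Z}}_{\ell})$ carries the $\ell$-adic topology, while $I_F^{\ell}$ is, by definition, the maximal closed subgroup of $I_F$ of pro-order prime to $\ell$; any continuous homomorphism from such a group to a pro-$\ell$ neighborhood of the identity is trivial, so $\psi_{\ell}$ must factor through a finite discrete quotient. Hence the image $H := \psi_{\ell}(I_F^{\ell})$ is a finite constant subgroup scheme of $\hat{G}_{\overline{\mathbb{Z}}_{\ell}}$ of order prime to $\ell$.

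Next I would note that $C_{\hat{G}}(\psi_{\ell}) = C_{\hat{G}}(H)$, since the centralizer depends only on the image, and that $H$ is linearly reductive over $\overline{\mathbb{Z}}_{\ell}$ because its order is invertible in the base (Maschke's theorem in the scheme-theoretic form). At this point I would invoke the general principle that for a reductive group scheme $\hat{G}$ over a base in which $|H|$ is invertible, the schematic centralizer $C_{\hat{G}}(H)$ is smooth with reductive identity component, i.e.\ generalized reductive. Over a strictly henselian base like $\overline{\mathbb{Z}}_{\ell}$, this is exactly the content of the discussion in \cite[Subsection 3.1]{dat2022ihes}, which in turn applies the theory of reductive group schemes from \cite{conrad2014reductive}.

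The only real obstacle is verifying that the required hypothesis (linear reductivity of $H$, equivalently $|H|$ invertible on the base) is satisfied, which is precisely why one restricts $\psi$ to the prime-to-$\ell$ inertia $I_F^{\ell}$ instead of to all of $I_F$: the restriction to $I_F^{\ell}$ is the largest piece of inertia whose image is automatically of order prime to $\ell$, so that the centralizer theorem applies. Once the finite image and its linear reductivity are established, the conclusion is immediate from the cited result.
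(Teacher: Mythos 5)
Your proposal is essentially the paper's own argument: identify $C_{\hat{G}}(\psi_{\ell})$ with the centralizer of the finite image $\psi(I_F^{\ell})$, note that this image has order prime to $\ell$ and hence invertible in $\overline{\mathbb{Z}}_{\ell}$, and then invoke the general linear-reductivity theorem from Dat et al.\ (the paper cites \cite[Lemma 3.2]{dat2022ihes}, you cite \cite[Subsection 3.1]{dat2022ihes}, but it is the same underlying result). The only wrinkle you omit, which the paper relegates to a remark, is that the cited lemma is literally stated over a normal subring of a number field, so one needs a short base-change / finite-extension argument to apply it over the non-Noetherian ring $\overline{\mathbb{Z}}_{\ell}$.
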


\begin{proof}
	We are going to use \cite[Lemma 3.2]{dat2022ihes}. We first note that $$C_{\hat{G}}(\psi_{\ell})=C_{\hat{G}}(\psi(I_F^{\ell})),$$
	where $C_{\hat{G}}(\psi(I_F^{\ell}))$ is the schematic centralizer of the subgroup $\psi(I_F^{\ell}) \subseteq \hat{G}(\overline{\mathbb{Z}}_{\ell})$ in $\hat{G}$. Indeed, this can be checked by the Yoneda Lemma on $R$-valued points for any $\overline{\mathbb{Z}}_{\ell}$-algebra $R$.
	
	Then, we can conclude by \cite[Lemma 3.2]{dat2022ihes}. Indeed, $\psi_{\ell}$ factors through some finite quotient $Q$ of $I_F^{\ell}$, which has order invertible in the base $\overline{\mathbb{Z}}_{\ell}$. Therefore, the assumptions of \cite[Lemma 3.2]{dat2022ihes} are satisfied (for details, see Remark \ref{Remark condition} below). 
\end{proof}

\begin{remark}\label{Remark condition}\
	\begin{enumerate}
		\item While \cite[Lemma 3.2]{dat2022ihes} is phrased in the setting that $R$ is a normal subring of a number field, it still works for $\overline{\mathbb{Z}}_{\ell} \subseteq \overline{\mathbb{Q}}_{\ell}$ instead of $\mathbb{Z} \subseteq \mathbb{Q}$. Indeed, $\psi_{\ell}$ factors through some finite quotient $Q$ of $I_F^{\ell}$, say of order $|Q|=N$ (note that $N$ is coprime to $\ell$ since $Q$ is a quotient of $I_F^{\ell}$). Then we can use \cite[Lemma 3.2]{dat2022ihes} to conclude that $C_{\hat{G}}(\psi_{\ell})$ is generalized reductive over $\mathbb{Z}[1/pN]$. Hence $C_{\hat{G}}(\psi_{\ell})$ is also generalized reductive over $\overline{\mathbb{Z}}_{\ell}$ by base change.
		\item There is also a small issue that $\overline{\mathbb{Z}}_{\ell}$ is not finite over $\mathbb{Z}_{\ell}$, but this can be resolved since everything is already defined over some sufficiently large finite extension $\mathcal{O}$ of $\mathbb{Z}_{\ell}$.
	\end{enumerate}
\end{remark}

\begin{lemma}\label{Lem I_F^ell}
	$$C_{\hat{G}}(\psi_{\ell})(\overline{\mathbb{F}}_{\ell})=C_{\hat{G}(\overline{\mathbb{F}}_{\ell})}(\varphi(I_F^\ell))=C_{\hat{G}(\overline{\mathbb{F}}_{\ell})}(\varphi(I_F)).$$
\end{lemma}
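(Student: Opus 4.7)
The statement asserts two equalities, and I will treat them separately. The first equality $C_{\hat{G}}(\psi_{\ell})(\overline{\mathbb{F}}_{\ell})=C_{\hat{G}(\overline{\mathbb{F}}_{\ell})}(\varphi(I_F^\ell))$ is essentially formal. By Definition \ref{Definition: Schematic centralizer} together with the fact that the schematic centralizer represents the set-theoretic centralizer on $R''$-points, we have
\[
C_{\hat{G}}(\psi_{\ell})(\overline{\mathbb{F}}_{\ell})
= C_{\hat{G}(\overline{\mathbb{F}}_{\ell})}\bigl(\bar\psi_{\ell}\bigr),
\]
where $\bar\psi_{\ell}$ denotes the reduction of $\psi_{\ell}$ modulo the maximal ideal of $\overline{\mathbb{Z}}_{\ell}$. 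But $\psi$ lifts $\varphi$ by assumption, so $\bar\psi_{\ell}=\varphi|_{I_F^\ell}$, and taking centralizers of the cocycle is the same as taking centralizers of its image $\varphi(I_F^\ell)\subseteq \hat{G}(\overline{\mathbb{F}}_{\ell})$.

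For the second equality, the inclusion $C_{\hat{G}(\overline{\mathbb{F}}_{\ell})}(\varphi(I_F))\subseteq C_{\hat{G}(\overline{\mathbb{F}}_{\ell})}(\varphi(I_F^\ell))$ is immediate from $I_F^\ell\subseteq I_F$. For the reverse inclusion, the plan is to show that in fact $\varphi(I_F)=\varphi(I_F^\ell)$ as subgroups of $\hat{G}(\overline{\mathbb{F}}_{\ell})$. The key observation is that $I_F/P_F$ is abelian, so (since $\varphi$ is tame) $\varphi(I_F)$ is an abelian finite subgroup of $\hat{G}(\overline{\mathbb{F}}_{\ell})$; in particular it centralizes itself and therefore
\[
\varphi(I_F)\subseteq C_{\hat{G}(\overline{\mathbb{F}}_{\ell})}(\varphi(I_F)),
\]
which by the regular-semisimple assumption \ref{regular semisimple} in Definition \ref{Def TRSELP} is the $\overline{\mathbb{F}}_{\ell}$-points of a torus $S$.

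The main (mild) obstacle is now to see that the elements of $\varphi(I_F)$ coming from $I_F\setminus I_F^\ell$ contribute nothing. The quotient $I_F/I_F^\ell$ is by definition pro-$\ell$ (isomorphic to $\mathbb{Z}_\ell$), so the image of $I_F/I_F^\ell$ inside the finite abelian group $\varphi(I_F)$ is a finite $\ell$-group. However, $S(\overline{\mathbb{F}}_{\ell})\cong (\overline{\mathbb{F}}_{\ell}^{\times})^{\dim S}$ contains no non-trivial elements of $\ell$-power order, because in characteristic $\ell$ the equation $x^{\ell^k}=1$ forces $x=1$. Hence this $\ell$-group image is trivial, i.e.\ $\varphi(I_F)=\varphi(I_F^\ell)$, which yields the second equality and completes the proof.
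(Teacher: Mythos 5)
Your proof is correct. The first equality is handled exactly as in the paper, by citing that the schematic centralizer represents the set-theoretic centralizer on points. For the second equality you and the paper prove the same core fact---that the $\ell$-primary piece of tame inertia is in the kernel of $\varphi$---but you package it differently. The paper chooses a faithful embedding $\hat{G}\hookrightarrow \GL(V)$, decomposes $\varphi|_{I_t}$ as a direct sum of characters $\gamma_1\oplus\cdots\oplus\gamma_d$, and then applies $\Hom_{\Cont}(\mathbb{Z}_\ell,\overline{\mathbb{F}}_\ell^\times)=\{1\}$ to each $\gamma_i$. You instead work intrinsically inside $\hat{G}$: since $\varphi(I_F)$ is abelian, it lands in the torus $S=C_{\hat{G}(\overline{\mathbb{F}}_\ell)}(\varphi(I_F))$ supplied by the regular-semisimple hypothesis, and then the absence of $\ell$-power torsion in $S(\overline{\mathbb{F}}_\ell)$ in characteristic $\ell$ kills the $\mathbb{Z}_\ell$-image. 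The two routes rest on the same characteristic-$\ell$ fact, but yours avoids choosing a faithful embedding and makes the role of the regular-semisimple condition explicit---which is actually a latent ingredient in the paper's argument too, since the direct-sum-of-characters decomposition needs $\varphi(I_t)$ to be diagonalizable and that is precisely what containment in the torus $S$ gives. One minor wording note: ``the image of $I_F/I_F^\ell$ inside $\varphi(I_F)$'' is cleanest if read as the image of the $\mathbb{Z}_\ell$-direct factor of $I_F/P_F$ (using the fixed splitting \eqref{Eq I_t}), or equivalently as the statement that the quotient $\varphi(I_F)/\varphi(I_F^\ell)$ is a quotient of $I_F/I_F^\ell\cong\mathbb{Z}_\ell$; either reading makes your conclusion go through.
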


\begin{proof}
	The first equation is by definition of the schematic centralizer and that $C_{\hat{G}}(\psi_{\ell})$ represents the set-theoretic centralizer. See Definition \ref{Definition: Schematic centralizer}.
	
	For the second equation, choose a faithful embedding $\hat{G} \hookrightarrow GL(V)$. Then, $\varphi|_{I_t}=\gamma_1 + ...+ \gamma_d$ is a direct sum of characters (since $I_t \cong \prod_{p'\neq p}\mathbb{Z}_{p'}$), so it suffices to show that each $\gamma_i$ is trivial on the summand $\mathbb{Z}_{\ell}$ of $I_t\cong \prod_{p'\neq p}\mathbb{Z}_{p'}$.
	Indeed,
	$$\Hom_{\Cont}(\mathbb{Z}_{\ell}, \overline{\mathbb{F}}_{\ell}^*)=\Hom_{\Cont}(\varprojlim\mathbb{Z}/\ell^n\mathbb{Z}, \overline{\mathbb{F}}_{\ell}^*)=\varinjlim\Hom(\mathbb{Z}/\ell^n\mathbb{Z}, \overline{\mathbb{F}}_{\ell}^*)=\{1\}.$$
\end{proof}

\begin{lemma}\label{Lem_Z^1()_1}
	$Z^1_{Ad(\psi)}(W_F, N_{\hat{G}}(\psi_{\ell})^0)_1 \cong (T \times T^{\Fr=(-)^q})^0.$
\end{lemma}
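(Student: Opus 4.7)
The plan is to match the scheme-theoretic condition $\eta|_{I_F^\ell}=1$ with the identity component of the product $T \times T^{\Fr=(-)^q}$, under the isomorphism
\[
Z^1_{Ad(\psi)}(W_F,T) \cong T \times T^{\Fr=(-)^q},\quad \eta \mapsto (\eta(\Fr),\eta(s_0)),
\]
already established in Step 2 of the proof of Theorem \ref{Thm X}. Here we have used that $Z^1_{Ad(\psi)}(W_F,T) = Z^1_{Ad(\psi)}(W_F/P_F,T)$, because $\psi|_{P_F}$ is trivial and any continuous homomorphism from the pro-$p$ group $P_F$ to the $\ell$-adic torus $T$ must be trivial. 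Since $T$ is connected, it suffices to show that the subscheme cut out by $\eta|_{I_F^\ell}=1$ corresponds precisely to $T \times (T^{\Fr=(-)^q})^0 = T \times \mu$, i.e., that the condition $\eta|_{I_F^\ell}=1$ is equivalent to $\eta(s_0) \in \mu$.

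First I would observe that, by Lemma \ref{Lemma: T}, $\psi(I_F) \subseteq T$, so the twisted action $Ad(\psi)$ is trivial on $T$ when restricted to $I_F$. Thus restriction to $I_F$ turns a cocycle $\eta$ into a (continuous) group homomorphism $\eta|_{I_F} \colon I_F \to T$, and the vanishing condition becomes: $\eta|_{I_F}$ factors through the quotient $I_F/I_F^\ell$, which is isomorphic to the maximal pro-$\ell$ quotient $\mathbb{Z}_\ell$ of $I_F/P_F \cong \prod_{p' \neq p}\mathbb{Z}_{p'}$.

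Next I would analyze the connected components of $T^{\Fr=(-)^q}$ over $\overline{\mathbb{Z}}_\ell$. It is a finite diagonalizable group scheme, hence isomorphic to $\prod_i \mu_{n_i}$ for some $n_i$. Writing $n_i = \ell^{k_i} m_i$ with $\gcd(m_i,\ell)=1$, we have $\mu_{n_i} \cong \mu_{\ell^{k_i}} \times \mu_{m_i}$; since $\overline{\mathbb{Z}}_\ell$ is strictly henselian and each $m_i$ is coprime to $\ell$, each $\mu_{m_i}$ is finite étale and splits as a disjoint union of $m_i$ copies of $\Spec \overline{\mathbb{Z}}_\ell$, while each $\mu_{\ell^{k_i}}$ is connected. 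Hence $(T^{\Fr=(-)^q})^0 = \prod_i \mu_{\ell^{k_i}} = \mu$ and $\pi_0(T^{\Fr=(-)^q}) = \prod_i \mu_{m_i}$ records precisely the prime-to-$\ell$ part. Viewing $\eta(s_0) \in T^{\Fr=(-)^q}$ as a homomorphism $I_F/P_F \to T$ sending the topological generator $s_0$ to $\eta(s_0)$, this homomorphism factors through the pro-$\ell$ quotient $\mathbb{Z}_\ell$ if and only if $\eta(s_0)$ has trivial image in $\pi_0(T^{\Fr=(-)^q})$, if and only if $\eta(s_0) \in \mu$.

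The main obstacle I anticipate is to phrase this last step scheme-theoretically rather than merely on points: one needs the restriction map $Z^1_{Ad(\psi)}(W_F,T) \to Z^1(I_F^\ell,T)$ to realize the passage to $\pi_0(T^{\Fr=(-)^q})$ geometrically. This should follow from the fact that $Z^1(I_F^\ell,T) = \Hom(I_F^\ell,T)$ factors through the prime-to-$\ell$ torsion of $T$, which over the strictly henselian ring $\overline{\mathbb{Z}}_\ell$ is étale. Hence the fiber $Z^1_{Ad(\psi)}(W_F,T)_1$ over the identity is an open and closed subscheme (a union of connected components) of $T \times T^{\Fr=(-)^q}$, and the component analysis above shows it is exactly the identity component $T \times \mu$.
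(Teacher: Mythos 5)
Your proof is essentially correct, and it takes a genuinely different logical route from the one in the paper. The paper proceeds in two steps: it first establishes that $Z^1_{Ad(\psi)}(W_F, T)_1$ is \emph{connected} over $\overline{\mathbb{Z}}_{\ell}$ by invoking \cite[Section 5.4, 5.5]{dat2022ihes} (after checking the needed hypotheses on the $s_0$-action), which gives the inclusion into the identity component of $T\times T^{\Fr=(-)^q}$; and then it upgrades this to equality by citing \cite[Section 4.6 and Theorem 4.2]{dat2022ihes} to see that $Z^1_{Ad(\psi)}(W_F, T)_1 \hookrightarrow Z^1_{Ad(\psi)}(W_F/P_F,T)$ is open and closed. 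You keep the second ingredient (open-and-closedness, via the observation that $\Hom(I_F^\ell, T)$ factors through prime-to-$\ell$ torsion, which is étale over the strictly henselian $\overline{\mathbb{Z}}_\ell$), but you \emph{replace} the connectedness citation by a direct component analysis: you compute the $\overline{\mathbb{Z}}_\ell$-points of $Z^1_{Ad(\psi)}(W_F,T)_1$ to be the pairs $(\eta(\Fr),\eta(s_0))$ with $\eta(s_0)$ of $\ell$-power order, and then identify this with $T\times\mu$ since each connected component of $T\times T^{\Fr=(-)^q}$ has a $\overline{\mathbb{Z}}_\ell$-point. This is more self-contained and buys a hands-on understanding of where $\mu$ comes from, at the cost of needing the explicit $\ell$-power/prime-to-$\ell$ decomposition of $T^{\Fr=(-)^q}$. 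One imprecision to flag: your opening claim that $Z^1_{Ad(\psi)}(W_F,T) = Z^1_{Ad(\psi)}(W_F/P_F,T)$ (without the subscript $1$), justified by pro-$p$-ness of $P_F$, is not quite right as stated, since $T(\overline{\mathbb{Z}}_\ell)$ does contain nontrivial $p$-power roots of unity, so $\Hom(P_F/P^e, T)$ need not vanish; but this is harmless, because you only ever use the version \emph{with} the subscript $1$, and there $P_F\subseteq I_F^\ell$ forces triviality on $P_F$ for free.
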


\begin{proof}
	We have omitted from the notations but here everything is over $\overline{\mathbb{Z}}_{\ell}$.
	Recall that $N_{\hat{G}}(\psi_{\ell})^0=C_{\hat{G}}(\psi_{\ell})^0=T$ and that
	$$Z^1_{Ad(\psi)}(W_F/P_F, N_{\hat{G}}(\psi_{\ell})^0) \cong T \times T^{\Fr=(-)^q}.$$
	
	By \cite[Section 5.4, 5.5]{dat2022ihes}, $Z^1_{Ad(\psi)}(W_F, N_{\hat{G}}(\psi_{\ell})^0)_1$ is connected (over $\overline{\mathbb{Z}}_{\ell}$). We need to check that the assumptions of \cite[Section 5.4, 5.5]{dat2022ihes} are satisfied. Indeed, since $N_{\hat{G}}(\psi_{\ell})^0=T$ is a connected torus, the $W_t^0$-action on $T$ automatically fixes a Borel pair of $T$. Moreover, $s_0$ acts trivially on $N_{\hat{G}}(\psi_{\ell})^0=T$ via $\psi$, so in particular the action of $s_0$ (which is denoted by $s$ in \cite[Section 5.5]{dat2022ihes}) has order a power of $\ell$ (which is $1 = \ell^0$).
	
	Therefore, 
	$$Z^1_{Ad(\psi)}(W_F, N_{\hat{G}}(\psi_{\ell})^0)_1 \subseteq Z^1_{Ad(\psi)}(W_F/P_F, N_{\hat{G}}(\psi_{\ell})^0)^0 \cong (T \times T^{\Fr=(-)^q})^0.$$
	
	On the other hand, by \cite[Section 4.6]{dat2022ihes}, 
	$$Z^1_{Ad(\psi)}(W_F, N_{\hat{G}}(\psi_{\ell})^0)_1 \hookrightarrow Z^1_{Ad(\psi)}(W_F/P_F, N_{\hat{G}}(\psi_{\ell})^0)$$
	is open and closed. Indeed, this can be seen by considering the restriction to the prime-to-$\ell$ inertia $I_F^{\ell}$, and then using \cite[Theorem 4.2]{dat2022ihes}.
	
	Therefore, 
	$$Z^1_{Ad(\psi)}(W_F, N_{\hat{G}}(\psi_{\ell})^0)_1 = Z^1_{Ad(\psi)}(W_F/P_F, N_{\hat{G}}(\psi_{\ell})^0)^0 \cong (T \times T^{\Fr=(-)^q})^0.$$
	
\end{proof}

\section{Main Theorem: description of $X_{\varphi}/\hat{G}$}\label{Section X/hatG}

We have already seen a description of $X_{\varphi}/\hat{G}$ in Proposition \ref{Proposition: T times mu/T}. The goal of this section is to describe $X_{\varphi}/\hat{G}$ explicitly under the assumption that the center $Z(\hat{G})$ of $\hat{G}$ is finite (see Theorem \ref{Thm X/G} for the precise statement).

Let $F$ be a non-archimedean local field, $G$ be a connected split reductive group over $F$. Let $\varphi \in Z^1(W_F, \hat{G}(\overline{\mathbb{F}}_{\ell}))$ be a TRSELP. Recall that this means that the centralizer 
$$C_{\hat{G}(\overline{\mathbb{F}}_{\ell})}(\varphi(I_F)) =: S \subseteq \hat{G}(\overline{\mathbb{F}}_{\ell})$$ 
is a maximal torus, and $\varphi(\Fr) \in N_{\hat{G}}(S)$ (and $\varphi$ is tame and elliptic). 


Let $\psi \in Z^1(W_F, \hat{G}(\overline{\mathbb{Z}}_{\ell}))$ be any lifting of $\varphi$. Let $\psi_{\ell}$ denote the restriction $\psi|_{I_F^{\ell}}$, and $\overline{\psi}$ denote the image of $\psi$ in $Z^1(W_F, \pi_0(N_{\hat{G}}(\psi_{\ell})))$. Recall that the schematic centralizer $C_{\hat{G}}(\psi_{\ell})=T$ is a split torus over $\overline{\mathbb{Z}}_{\ell}$ with $\overline{\mathbb{F}}_{\ell}$-points $C_{\hat{G}(\overline{\mathbb{F}}_{\ell})}(\varphi(I_F)) = S$.

Our main theorem is the following.

\begin{theorem}\label{Thm X/G}
	Assume that $Z(\hat{G})$ is finite.
	Let $X_{\varphi}$($=X_{\psi}$) be the connected component of $Z^1(W_F, \hat{G})_{\overline{\mathbb{Z}}_{\ell}}$ containing $\varphi$ (hence also containing $\psi$). Then we have isomorphisms of quotient stacks
	\begin{equation}\label{Equation: X_phi}
		X_{\varphi}/\hat{G} \cong (T \times \mu)/T \cong [T/T] \times \mu \cong [*/{C_T(n)}] \times \mu \cong [*/S_{\psi}] \times \mu,
	\end{equation}
	where $C_T(n)$ is the schematic centralizer of $n=\psi(\Fr)$ in $T=C_{\hat{G}}(\psi|_{I_F^{\ell}})$, $\mu=(T^{\Fr=(-)^q})^0 \cong \prod_{i=1}^m\mu_{\ell^{k_i}}$ for some $k_i \in \mathbb{Z}_{\geq 1}$, $m \in \mathbb{Z}_{\geq 0}$ is a product of group schemes of roots of unity, and $S_{\psi}:=C_{\hat{G}}(\psi)$ is the schematic centralizer of $\psi$ in $\hat{G}$. 
	
\end{theorem}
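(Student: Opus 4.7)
The first two isomorphisms in \eqref{Equation: X_phi} are already furnished by Proposition \ref{Proposition: T times mu/T}, so the plan is to establish the remaining identifications $[T/T] \cong [*/C_T(n)]$ and $C_T(n) \cong S_{\psi}$ separately, where $t \in T$ acts on $T$ by left multiplication by $\alpha(t) := t n t^{-1} n^{-1}$ with $n = \psi(\Fr)$.

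The identification $C_T(n) \cong S_{\psi}$ is the easier one: since $W_F$ is topologically generated by $I_F$ together with $\Fr$, the schematic centralizer $S_{\psi}$ equals the schematic intersection $C_{\hat{G}}(\psi|_{I_F}) \cap C_{\hat{G}}(n)$, which by Lemma \ref{Lemma: T} is $T \cap C_{\hat{G}}(n) = C_T(n)$.

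For $[T/T] \cong [*/C_T(n)]$, I would first rewrite $\alpha(t) = t \sigma(t)^{-1}$, where $\sigma \in \Aut(T)$ denotes conjugation by $n$ (well-defined since $n \in N_{\hat{G}}(T)$); commutativity of $T$ combined with $\sigma$ being a homomorphism then makes $\alpha$ itself a group homomorphism, with kernel visibly $T^{\sigma} = C_T(n)$. By the standard stacky orbit--stabilizer identification, $[T/T] \cong [*/C_T(n)]$ will follow once $\alpha$ is shown to be an fppf epimorphism. Here my plan is to exploit the anti-equivalence between diagonalizable group schemes over $\overline{\mathbb{Z}}_{\ell}$ and finitely generated abelian groups: setting $X := X^{*}(T)$, the map $\alpha$ corresponds to $1 - \sigma^{*} : X \to X$, so surjectivity of $\alpha$ translates to the vanishing $X^{\sigma^{*}} = 0$.

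The main obstacle is precisely this vanishing, which is where ellipticity and the finiteness hypothesis on $Z(\hat{G})$ enter. Ellipticity (item \ref{elliptic} of Definition \ref{Def TRSELP}) gives $C_{\hat{G}}(\varphi)^{0} = Z(\hat{G})^{0}$ over $\overline{\mathbb{F}}_{\ell}$; by finiteness of $Z(\hat{G})$ this is zero-dimensional, so $C_{\hat{G}}(\varphi)$ itself is zero-dimensional. Since $C_{\hat{G}}(\varphi) = S_{\psi} \otimes_{\overline{\mathbb{Z}}_{\ell}} \overline{\mathbb{F}}_{\ell}$ and the diagonalizable group scheme $S_{\psi} = C_T(n)$ corresponds to the coinvariants $X_{\sigma^{*}}$, this forces the rank of $X_{\sigma^{*}}$ to be zero. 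Since $\sigma^{*}$ has finite order on the free abelian group $X$, the invariants $X^{\sigma^{*}}$ and coinvariants $X_{\sigma^{*}}$ share the same rank, and hence $X^{\sigma^{*}} = 0$ as needed.
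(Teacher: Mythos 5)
Your proposal is correct and follows essentially the same route as the paper's proof: reduce via Proposition \ref{Proposition: T times mu/T} to the twisted quotient $[T/T]$, observe that $t\mapsto tnt^{-1}n^{-1}$ is a homomorphism of the split torus $T$ with kernel $C_T(n)$, pass to character groups via the (anti-)equivalence with abelian groups, and use ellipticity together with finiteness of $Z(\hat{G})$ to force the kernel of $1-\sigma^*$ to vanish (the paper's Lemma \ref{Lem epic} phrases this as injectivity of $f^*$ from finiteness of $\coker(f^*)$, which is the same rank-count as your invariants/coinvariants observation). Your identification $C_T(n)\cong S_\psi$ via $W_F=\langle I_F,\Fr\rangle$ and Lemma \ref{Lemma: T} likewise matches the final line of the paper's proof.
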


\begin{proof}
	By Proposition \ref{Proposition: T times mu/T}, we are reduced to computing $T/T$, with $t \in T$ acting on $T$ via multiplication by $tnt^{-1}n^{-1}$, where $n=\psi(\Fr)$.
	
    Consider the morphism
	$$f: T^{(1)} := T \longrightarrow T =: T^{(2)} \qquad s \longmapsto sns^{-1}n^{-1}.\footnote{For clarification, let us denote the source torus $T$ as $T^{(1)}$ and the target torus $T$ as $T^{(2)}$.}$$
	This is surjective on $\overline{\mathbb{F}}_{\ell}$-points by our assumption that $Z(\hat{G})$ is finite and $\varphi$ is elliptic (see Lemma \ref{Lem epic} below). Hence $f$ is an epimorphism in the category of diagonalizable $\overline{\mathbb{Z}}_{\ell}$-group schemes (see Lemma \ref{Lem epic} below). Therefore, $f$ induces an isomorphism 
	\begin{equation}\label{eq_T}
		T^{(1)}/\ker(f) \cong T^{(2)}
	\end{equation}
	as diagonalizable $\overline{\mathbb{Z}}_{\ell}$-group schemes. Moreover, if we let $t \in T$ act on $T^{(1)}$ by left multiplication by $t$, and on $T^{(2)}$ via multiplication by $tnt^{-1}n^{-1}$, this isomorphism induced by $f$ is $T$-equivariant.
	
	Note that $T^{(1)}=T$ is commutative, so the $T$-action (via multiplication by $tnt^{-1}n^{-1}$) and the $\ker(f)$-action (via left multiplication) on $T$ commute with each other. Hence by the $T$-equivariant isomorphism \eqref{eq_T}, we have
	$$T/T = T^{(2)}/T \cong \left(T^{(1)}/\ker(f)\right)/T \cong \left(T^{(1)}/T\right)/\ker(f) \cong [*/\ker(f)] = [*/C_T(n)].$$ 
	
	

    Moreover, recall that we have $T:=C_{\hat{G}}(\psi|_{I_F^{\ell}})=C_{\hat{G}}(\psi|_{I_F})$ (see Lemma \ref{Lemma: T}). So 
    $$C_T(n) \cong C_{\hat{G}}(\psi(I_F), \psi(\Fr)) \cong C_{\hat{G}}(\psi) =: S_{\psi}.$$
\end{proof}

\begin{remark}
	One might wonder when is $S_{\psi}$ a constant group scheme over $\overline{\mathbb{Z}}_{\ell}$. Indeed, one can prove that this is the case when $\ell$ does not divide the order of $\psi(\Fr)$ in the Weyl group $N_{\hat{G}}(T)/T$.
\end{remark}

\begin{lemma}\label{Lem epic}
	The morphism 
	$$f: T^{(1)} = T \longrightarrow T = T^{(2)} \qquad s \longmapsto sns^{-1}n^{-1}$$
	is epimorphic in the category of diagonalizable $\overline{\mathbb{Z}}_{\ell}$-group schemes. Moreover, $f$ induces an isomorphism $T^{(1)}/\ker(f) \cong T^{(2)}$ as diagonalizable $\overline{\mathbb{Z}}_{\ell}$-group schemes.
\end{lemma}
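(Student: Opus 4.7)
The plan is to rewrite $f$ as a Lang--Steinberg-type map and then use the anti-equivalence between diagonalizable $\overline{\mathbb{Z}}_{\ell}$-group schemes and finitely generated abelian groups (via the character functor $H \mapsto X^{*}(H) = \Hom(H, \mathbb{G}_m)$, see \cite{brochard2014autour}). First, I observe that $n = \psi(\Fr)$ normalizes $T = C_{\hat{G}}(\psi|_{I_F^{\ell}})$, because the tame relation $\Fr s \Fr^{-1} = s^q$ shows that $\psi(\Fr)$ sends $\psi|_{I_F^{\ell}}$ to another cocycle with the same centralizer. So $\sigma := \mathrm{Ad}(n)|_T$ is an automorphism of $T$, and $f(s) = s \cdot \sigma(s)^{-1}$. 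Under the character anti-equivalence, epimorphisms of diagonalizable group schemes correspond to injections of character groups, and short exact sequences of diagonalizable groups correspond (in reverse) to short exact sequences of character groups. So proving both claims of the lemma reduces to showing that $f^{*}: X^{*}(T) \to X^{*}(T)$ is injective.

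A direct computation gives $f^{*} = 1 - \sigma^{*}$ on $X^{*}(T)$. Since $X^{*}(T)$ is a free abelian group of finite rank, injectivity of $1 - \sigma^{*}$ is equivalent to $(X^{*}(T) \otimes \mathbb{Q})^{\sigma^{*}} = 0$, which is in turn equivalent to the identity component $C_T(n)^{0}$ being trivial (coinvariants and fixed-point subtorus are dual in the standard way). The key input is ellipticity: by Lemma \ref{Lemma: T} we have $T = C_{\hat{G}}(\psi|_{I_F})$, hence
\begin{equation*}
C_T(n) = C_{\hat{G}}(\psi|_{I_F},\, \psi(\Fr)) = C_{\hat{G}}(\psi) = S_{\psi},
\end{equation*}
and therefore $C_T(n)^{0} = S_{\psi}^{0}$.

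It remains to argue that $S_{\psi}^{0}$ is trivial over $\overline{\mathbb{Z}}_{\ell}$. Its special fiber is $C_{\hat{G}(\overline{\mathbb{F}}_{\ell})}(\varphi)^{0} = Z(\hat{G})^{0}$ by ellipticity of $\varphi$ (Definition \ref{Def TRSELP}\eqref{elliptic}), which is trivial because $Z(\hat{G})$ is finite by assumption. Since $S_{\psi}^{0}$ is a closed subtorus of the split torus $T$, it is itself a split torus over $\overline{\mathbb{Z}}_{\ell}$; such a torus is determined by its rank, hence by its special fiber, so $S_{\psi}^{0}$ is trivial. Therefore $1 - \sigma^{*}$ is injective, $f$ is an epimorphism in diagonalizable $\overline{\mathbb{Z}}_{\ell}$-group schemes, and the induced short exact sequence $1 \to \ker(f) \to T \xrightarrow{f} T \to 1$ yields the isomorphism $T/\ker(f) \cong T$. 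The only real subtlety is the passage from the special fiber statement (ellipticity) to the integral statement about $S_{\psi}^{0}$, which is handled by the rigidity of subtori of a split torus.
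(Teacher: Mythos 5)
Your proposal follows essentially the same approach as the paper: pass to character groups via the anti-equivalence between diagonalizable $\overline{\mathbb{Z}}_{\ell}$-group schemes and abelian groups, reduce to injectivity of $f^{*}$, and deduce that from ellipticity together with finiteness of $Z(\hat{G})$ (the paper phrases this as ``$\coker(f^{*})$ is finite since $S_{\varphi}(\overline{\mathbb{F}}_{\ell})$ is a finite set,'' which is equivalent to your condition $(X^{*}(T)\otimes\mathbb{Q})^{\sigma^{*}}=0$). One small caution: over $\overline{\mathbb{Z}}_{\ell}$ or $\overline{\mathbb{F}}_{\ell}$ a finite diagonalizable group can have nontrivial connected component ($\mu_{\ell^{k}}$ factors), so the equivalence you assert between ``$(X^{*}(T)\otimes\mathbb{Q})^{\sigma^{*}}=0$'' and ``$C_{T}(n)^{0}$ trivial,'' as well as the inference ``$Z(\hat{G})$ finite $\Rightarrow Z(\hat{G})^{0}$ trivial,'' are literally correct only if $({-})^{0}$ is read as the maximal subtorus (reduced identity component); the right invariant is $\dim C_{T}(n)=0$, which is what your special-fiber argument in fact establishes.
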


\begin{proof}
	Recall that $T$ is a split torus over $\overline{\mathbb{Z}}_{\ell}$, hence a diagonalizable $\overline{\mathbb{Z}}_{\ell}$-group scheme. Note that $f$ is a morphism of $\overline{\mathbb{Z}}_{\ell}$-group schemes and hence a morphism of diagonalizable $\overline{\mathbb{Z}}_{\ell}$-group schemes. Recall that the category of diagonalizable $\overline{\mathbb{Z}}_{\ell}$-group schemes is equivalent to the category of abelian groups (see \cite[p70, Section 5]{brochard2014autour} or \cite{conrad2014reductive}) via
	$$D \mapsto \Hom_{\overline{\mathbb{Z}}_{\ell}-GrpSch}(D, \mathbb{G}_m),$$
	and the inverse is given by 
	$$\overline{\mathbb{Z}}_{\ell}[M] \mapsfrom M,$$
	where $\overline{\mathbb{Z}}_{\ell}[M]$ is the group algebra of $M$ with $\overline{\mathbb{Z}}_{\ell}$-coefficients.
	
	Therefore, we can argue in the category of abelian groups via the above equivalence of categories: $f$ is epimorphic if and only if the map $f^*$ in the category of abelian groups is monomorphic. Since $\varphi$ is elliptic and $Z(\hat{G})$ is finite, $S_{\varphi}$ is finite; hence, 
	$$\ker(f)(\overline{\mathbb{F}}_{\ell})=C_T(n)(\overline{\mathbb{F}}_{\ell})=S_{\varphi}(\overline{\mathbb{F}}_{\ell})$$
	is finite (where the first equality is by definition of $f$, and the second equality holds because $T(\overline{\mathbb{F}}_{\ell})=C_{\hat{G}(\overline{\mathbb{F}}_{\ell})}(\varphi(I_F))$ and $n = \psi(\Fr)$ maps to $\varphi(\Fr) \in \hat{G}(\overline{\mathbb{F}}_{\ell})$ under the natural map $\hat{G}(\overline{\mathbb{Z}}_{\ell}) \to \hat{G}(\overline{\mathbb{F}}_{\ell})$). Accordingly, $\coker(f^*)$ is finite. Therefore, 
	$$f^*:\Hom(T^{(2)}, \mathbb{G}_m) \to \Hom(T^{(1)}, \mathbb{G}_m)$$
	is injective (i.e., monomorphism). Indeed, otherwise $\ker(f^*)$ would be a nonzero sub-$\mathbb{Z}$-module of the finite free $\mathbb{Z}$-module $\Hom(T^{(2)}, \mathbb{G}_m)$, hence a free $\mathbb{Z}$-module of positive rank, which contradicts $\coker(f^*)$ being finite.
	
	The statement on the quotient follows from the corresponding result in the category of abelian groups: $f^*$ induces an isomorphism
	$$\Hom(T^{(1)}, \mathbb{G}_m)/\Hom(T^{(2)}, \mathbb{G}_m) \cong \coker(f^*)$$
	(see \cite[p71, Subsection 5.3]{brochard2014autour}).
\end{proof}

		\chapter{Depth-zero regular supercuspidal blocks} \label{Chapter Rep}
		
		The goal of this chapter is to describe the block $\Rep_{\overline{\mathbb{Z}}_{\ell}}(G(F))_{[\pi]}$ (denoted $\mathcal{C}_{x,1}$ later) of $\Rep_{\overline{\mathbb{Z}}_{\ell}}(G(F))$ containing a  depth-zero regular supercuspidal representation $\pi$.
		
		Assume that $G$ is semisimple and simply connected. Recall that a depth-zero regular supercuspidal representation $\pi$ is of the form
		$$\pi=\cInd_{G_x}^{G(F)}\rho,$$
		where $\rho$ is a representation of $G_x$ whose reduction $\overline{\rho}$ to the finite reductive group
		$\overline{G_x}=G_x/G_x^+$ is supercuspidal.
		
		In the end, the block $\Rep_{\overline{\mathbb{Z}}_{\ell}}(G(F))_{[\pi]}$ would be equivalent to the block $\Rep_{\overline{\mathbb{Z}}_{\ell}}(\overline{G_x})_{[\overline{\rho}]}$ (denoted $\mathcal{A}_{x,1}$ later) of $\Rep_{\overline{\mathbb{Z}}_{\ell}}(\overline{G_x})$ containing $\overline{\rho}$. In addition, $\mathcal{A}_{x,1}$ has an explicit description via the Broué equivalence \ref{Thm Broué}.
		
		Indeed, let $\Rep_{\overline{\mathbb{Z}}_{\ell}}(G_x)_{[\rho]}$ (denoted $\mathcal{B}_{x,1}$ later) be the block of $\Rep_{\overline{\mathbb{Z}}_{\ell}}(G_x)$ containing $\rho$. It is not hard to see that inflation along $G_x \to \overline{G_x}$ induces an equivalence of categories 
		$\mathcal{A}_{x,1} \cong \mathcal{B}_{x,1}$. The main theorem we prove in this chapter is that compact induction induces an equivalence of categories
		$$\cInd_{G_x}^{G(F)}: \mathcal{B}_{x,1} \cong \mathcal{C}_{x,1}.$$
		The proof of this main theorem \ref{Thm Main} would occupy most of this chapter, from Section \ref{Section cInd} to \ref{Section projective generator}. The proof relies on three theorems. In Section \ref{Section cInd}, we prove the main theorem modulo the three theorems. The proofs of the three theorems are given in Sections \ref{Sec Reg Cusp}, \ref{Sec Pf Thm Hom}, \ref{Section projective generator}, respectively.

		\section{The compact induction induces an equivalence}\label{Section cInd}
		In this section, we prove the Main Theorem \ref{Thm Main} modulo Theorem \ref{Thm SC Red} \ref{Thm Hom} \ref{Thm Proj}.
		
		Let $G$ be a connected split reductive group scheme over $\mathbb{Z}$, which is semisimple and simply connected. Let $F$ be a non-archimedean local field, with the ring of integers $\mathcal{O}_F$ and residue field $k_F \cong \mathbb{F}_q$ of characteristic $p$. For simplicity, we assume that $q$ is greater than the Coxeter number of $\overline{G_x}$ for any vertex $x$ of the Bruhat-Tits building of $G$ over $F$ (see Theorem \ref{Thm Broué} for reason).
		
		Let $x$ be a vertex of the Bruhat-Tits building $\mathcal{B}(G, F)$. Let $G_x$ be the parahoric subgroup associated to $x$, and $G_x^+$ be its pro-unipotent radical. Recall that $\overline{G_x}:=G_x/G_x^+$ is a generalized Levi subgroup of $G(k_F)$ with root system $\Phi_x$, see \cite[Theorem 3.17]{rabinoff2003bruhat}. 
		
		Let $\Lambda=\overline{\mathbb{Z}}_{\ell}$, with $\ell \neq p$. Let $\rho \in \Rep_{\Lambda}(G_x)$ be an irreducible representation of $G_x$, which is trivial on $G_x^+$ and whose reduction to the finite group of Lie type $\overline{G_x}=G_x/G_x^+$ is  
		regular supercuspidal. Here \textbf{regular supercuspidal} (see Definition \ref{Def regular supercuspidal}) means that $\rho$ is supercuspidal and lies in a regular block of $\Rep_{\Lambda}(\overline{G_x})$, in the sense of \cite{broue1990isometries}. The reason we want the regularity assumption is that we want to work with a block of $\Rep_{\Lambda}(\overline{G_x})$ which consists purely of supercuspidal representations. See Section \ref{Sec Reg Cusp} for details. We make this a definition for later use.
		
		\begin{definition}
			Let $\rho \in \Rep_{\Lambda}(G_x)$. We say $\rho$ \textbf{has supercuspidal reduction} (resp. \textbf{has regular supercuspidal reduction}), if $\rho$ is trivial on $G_x^+$ and whose reduction to the finite group of Lie type $\overline{G_x}=G_x/G_x^+$ is supercuspidal (resp. regular supercuspidal). Let us denote the reduction of $\rho$ modulo $G_x^+$ by $\overline{\rho} \in \Rep_{\Lambda}(\overline{G_x})$.
		\end{definition}
		
		Let $\mathcal{B}_{x,1}$ be the block of $\Rep_{\Lambda}(G_x)$ containing $\rho$. Let $\mathcal{C}_{x,1}$ be the block of $\Rep_{\Lambda}(G(F))$ containing $\pi:=\cInd_{G_x}^{G(F)}\rho$. Now we can state the main theorem of this chapter.
		
		\begin{theorem}[Main Theorem]\label{Thm Main}
			Let $x$ be a vertex of the Bruhat-Tits building $\mathcal{B}(G, F)$. Let $\rho \in \Rep_{\Lambda}(G_x)$ be a representation which has regular supercuspidal reduction. Let $\mathcal{B}_{x,1}$ be the block of $\Rep_{\Lambda}(G_x)$ containing $\rho$. Let $\mathcal{C}_{x,1}$ be the block of $\Rep_{\Lambda}(G(F))$ containing $\pi:=\cInd_{G_x}^{G(F)}\rho$. Then the compact induction $\cInd_{G_x}^{G(F)}$ induces an equivalence of categories $\mathcal{B}_{x,1} \cong \mathcal{C}_{x,1}$. 
		\end{theorem}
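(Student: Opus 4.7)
The plan is to follow the three-step strategy outlined in the introduction. First I would prove full faithfulness by a Mackey-plus-Frobenius computation (this is Theorem \ref{Thm Hom}). Next I would produce an explicit projective generator $\Pi_{x,1}$ of $\mathcal{C}_{x,1}$ lying in the essential image of $\cInd_{G_x}^{G(F)}$, realized as a direct summand of the depth-zero projective generator $\Pi := \cInd_{G_x^+}^{G(F)} \Lambda$. Finally I would conclude by a Morita argument transporting the already-established equivalence $\mathcal{A}_{x,1} \cong \mathcal{B}_{x,1}$ across compact induction.

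For full faithfulness, take $\rho_1, \rho_2 \in \mathcal{B}_{x,1}$. By Theorem \ref{Thm SC Red} both have regular supercuspidal reduction, so Theorem \ref{Thm Hom} will yield $\Hom_{G(F)}(\cInd_{G_x}^{G(F)} \rho_1, \cInd_{G_x}^{G(F)} \rho_2) \cong \Hom_{G_x}(\rho_1, \rho_2)$. Concretely, Frobenius reciprocity plus Mackey reduces the left-hand side to a product indexed by $G_x \backslash G(F) / G_x$, and only the trivial double coset contributes: for $g \notin G_x$ the image of the intersection $G_x \cap g G_x g^{-1}$ in $\overline{G_x}$ lies in a proper parabolic, and supercuspidality of $\overline{\rho_2}$ kills the corresponding intertwining.

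For essential surjectivity I would decompose $\Pi$ using the block idempotents of $\Lambda[\overline{G_x}]$. By Frobenius reciprocity $\Pi \cong \cInd_{G_x}^{G(F)} \Lambda[\overline{G_x}]$, with $G_x^+$ acting trivially on $\Lambda[\overline{G_x}]$. The central idempotent $e$ cutting out the block $\mathcal{A}_{x,1}$ of $\overline{\rho}$ induces $\Lambda[\overline{G_x}] = \sigma_{x,1} \oplus \sigma^{x,1}$, hence $\Pi = \Pi_{x,1} \oplus \Pi^{x,1}$ with $\Pi_{x,1} = \cInd_{G_x}^{G(F)} \sigma_{x,1}$. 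Here $\sigma_{x,1}$ is the inflation of the progenerator of $\mathcal{A}_{x,1}$, so it has regular supercuspidal reduction and Theorem \ref{Thm Hom} applies. A parallel Mackey analysis will give the key vanishing
\[
\Hom_{G(F)}(\Pi_{x,1}, \Pi^{x,1}) \;=\; 0 \;=\; \Hom_{G(F)}(\Pi^{x,1}, \Pi_{x,1}),
\]
which is what Theorem \ref{Thm Proj} should record. Granting this, $\Pi_{x,1}$ is a summand of the projective generator $\Pi$ of $\Rep_{\Lambda}(G(F))_0$, so it is projective and lies in $\mathcal{C}_{x,1}$; and for any $V \in \mathcal{C}_{x,1}$ the natural surjection $\Pi \twoheadrightarrow V$ restricts to zero on $\Pi^{x,1}$, so $\Pi_{x,1}$ alone surjects onto $V$. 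Thus $\Pi_{x,1}$ is a progenerator of $\mathcal{C}_{x,1}$.

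Since $\cInd_{G_x}^{G(F)}$ carries the progenerator $\sigma_{x,1}$ of $\mathcal{B}_{x,1}$ to the progenerator $\Pi_{x,1}$ of $\mathcal{C}_{x,1}$, and since the full-faithfulness step identifies $\End_{G(F)}(\Pi_{x,1}) \cong \End_{G_x}(\sigma_{x,1})$, a standard Morita argument then concludes that $\cInd_{G_x}^{G(F)} : \mathcal{B}_{x,1} \to \mathcal{C}_{x,1}$ is an equivalence. The main obstacle I expect is the vanishing displayed above: the easy case is $g \in G_x$, where it reduces to $\Hom_{G_x}(\sigma_{x,1}, \sigma^{x,1}) = 0$ by the block decomposition; the delicate case is $g \notin G_x$ conjugating $G_x$ toward the parahoric of a different, possibly non-$G(F)$-conjugate facet, where one must combine the pro-$p$ vanishing with a careful parabolic-restriction argument inside $\overline{G_x}$. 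This is precisely why the regularity assumption on $\rho$ is indispensable: Theorem \ref{Thm SC Red} must guarantee that every object of $\mathcal{B}_{x,1}$ (not merely $\rho$) has supercuspidal reduction, so that the Mackey terms genuinely collapse for all $\rho_1, \rho_2$ in the block.
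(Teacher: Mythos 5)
Your overall strategy coincides with the paper's: full faithfulness from the Mackey/Frobenius computation (Theorem~\ref{Thm Hom}, prepared by Theorem~\ref{Thm SC Red}), then essential surjectivity by exhibiting $\Pi_{x,1}$ as a progenerator of $\mathcal{C}_{x,1}$. Two steps, however, are not filled in correctly.

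First, you take $\Pi := \cInd_{G_x^+}^{G(F)}\Lambda$ as the projective generator of the depth-zero category. This is the version stated in the paper's introduction, but it is only appropriate when all vertices are $G(F)$-conjugate (as for $\GL_n$). For a general simply connected semisimple $G$ — already for $\SL_2$ — there are several $G(F)$-orbits of vertices, and $\cInd_{G_x^+}^{G(F)}\Lambda$ alone does not generate $\Rep_{\Lambda}(G(F))_0$. The proof in Chapter~\ref{Chapter Rep} instead takes $\Pi = \bigoplus_{y\in V}\Pi_y$ over a set of representatives $V$ of all vertex orbits, citing~\cite[Appendix]{dat2009finitude}. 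The extra summands $\Pi^x := \bigoplus_{y\neq x}\Pi_y$ must be included in $\Pi^{x,1}$, and their orthogonality to $\Pi_{x,1}$ is exactly where the \emph{second} case of Theorem~\ref{Thm Hom} (the case $g.x\neq y$ for non-conjugate vertices) enters (Lemma~\ref{Lem Ortho}). Your sketch only decomposes the one piece $\Pi_x$ by the central idempotent of $\Lambda[\overline{G_x}]$, which corresponds to the summand $\Pi_x^1$ in the paper's notation.

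Second, the assertion that the surjection $\Pi\twoheadrightarrow V$ ``restricts to zero on $\Pi^{x,1}$'' for $V\in\mathcal{C}_{x,1}$ is not an immediate consequence of the two-sided orthogonality $\Hom_G(\Pi_{x,1},\Pi^{x,1}) = 0 = \Hom_G(\Pi^{x,1},\Pi_{x,1})$. What the orthogonality gives directly is a decomposition of the endomorphism ring $\End_G(\Pi)\cong \End_G(\Pi_{x,1})\times \End_G(\Pi^{x,1})$; combined with the Morita equivalence $\Rep_{\Lambda}(G(F))_0\cong\Modr\End_G(\Pi)$, this splits the depth-zero category as a product, and only then does the indecomposability of the block $\mathcal{C}_{x,1}$ force it entirely into the $\Pi_{x,1}$-factor, yielding $\Hom_G(\Pi^{x,1}, X)=0$ for all $X\in\mathcal{C}_{x,1}$. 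This is the content of the ``Important Lemma''~\ref{Lem Gen}, which your proposal elides, leaving a genuine gap: without it one could not rule out that $\Pi^{x,1}$ maps nontrivially to some depth-zero object in $\mathcal{C}_{x,1}$ not visible from $\Pi_{x,1}$. Similarly, that $\Pi_{x,1}$ lies in $\mathcal{C}_{x,1}$ at all requires the same block-indecomposability argument (cf.\ Lemma~\ref{Lem Indec}), not just the fact that it is a summand of a depth-zero projective.
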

		
		As mentioned before, the reason we want the regular supercuspidal assumption is the following theorem, which is proven in Section \ref{Sec Reg Cusp}. 
		
		\begin{theorem}\label{Thm SC Red}
			Let $\rho \in \Rep_{\Lambda}(G_x)$ be an irreducible representation of $G_x$, which has regular supercuspidal reduction. Let $\mathcal{B}_{x,1}$ be the block of $\Rep_{\Lambda}(G_x)$ containing $\rho$. Then any $\rho' \in \mathcal{B}_{x,1}$ has supercuspidal reduction.
		\end{theorem}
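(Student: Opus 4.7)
The plan is to split the claim into two parts: (i) every $\rho' \in \mathcal{B}_{x,1}$ is trivial on the pro-unipotent radical $G_x^+$, and hence descends to a representation $\overline{\rho'}$ of $\overline{G_x}$; and (ii) the resulting $\overline{\rho'}$ is supercuspidal. Part (i) is a purely structural fact that does not use regularity, while part (ii) is where the regularity hypothesis is essential and I plan to invoke Broué's theorem \ref{Thm Broué}.

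For part (i), the key input is that $G_x^+$ is a normal pro-$p$ subgroup of $G_x$ whose pro-order is invertible in $\Lambda = \overline{\mathbb{Z}}_{\ell}$. Integration against normalized Haar measure on $G_x^+$ then defines an idempotent $e^+$ in the smooth Hecke algebra of $G_x$, and normality of $G_x^+$ in $G_x$ forces $e^+$ to be central. On any smooth $G_x$-representation $V$, $e^+$ acts as the projector onto $V^{G_x^+}$, so by centrality one obtains a functorial direct sum decomposition $V = V^{G_x^+} \oplus (1-e^+)V$ of $G_x$-representations. Consequently $\Rep_\Lambda(G_x)$ factors as a product of its full subcategory of $G_x^+$-trivial representations with a complementary factor, and every block must lie entirely in one factor. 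Since $\rho \in \mathcal{B}_{x,1}$ is $G_x^+$-trivial by hypothesis, so is every object of $\mathcal{B}_{x,1}$.

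For part (ii), inflation identifies $\Rep_\Lambda(\overline{G_x})$ with the $G_x^+$-trivial factor of $\Rep_\Lambda(G_x)$, so $\mathcal{B}_{x,1}$ corresponds to a unique block $\mathcal{A}_{x,1} \subseteq \Rep_\Lambda(\overline{G_x})$ containing $\overline{\rho}$, and the task reduces to showing that every simple object of $\mathcal{A}_{x,1}$ is supercuspidal. Under the regularity hypothesis, Broué's theorem \ref{Thm Broué} describes $\mathcal{A}_{x,1}$ explicitly as Morita equivalent to a block of the normalizer of a suitable $\ell$-torus of $\overline{G_x}$ modulo that torus, and I plan to use this explicit description to verify that no simple in $\mathcal{A}_{x,1}$ arises from Harish-Chandra induction along a proper parabolic. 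The main obstacle is exactly this last step: one needs to check, via the Broué equivalence, that the Harish-Chandra series of $\overline{\rho}$ collapses to a singleton, a phenomenon that is characteristic of \emph{regular} cuspidal pairs. Without the regularity hypothesis the block could well contain non-cuspidal composition factors, so the hypothesis is doing real work here and is the only place in the proof where it enters.
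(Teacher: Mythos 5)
Your part (i) follows essentially the same strategy as the paper's Lemma~\ref{Lem Summand}: exhibit $\Rep_{\Lambda}(G_x)_0$ (smooth representations trivial on $G_x^+$) as an abelian-category summand of $\Rep_{\Lambda}(G_x)$ using the idempotent $1_{G_x^+}$, so that each block lies entirely in one factor. Your sketch glosses over why $e^+$ is central in the full non-unital smooth Hecke algebra (the paper works with a cofinal sequence of normal open compact pro-$p$ subgroups $K_n$ and shows that each $e_{K_n}$ is central by expanding arbitrary smooth functions in terms of their translates), but the underlying idea is identical and normality is indeed the essential ingredient.

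Part (ii) has a real gap. You reduce, correctly, to a block $\mathcal{A}_{x,1}$ of $\Rep_{\Lambda}(\overline{G_x})$, but you then state that the work remaining is to ``verify, via the Broué equivalence, that the Harish-Chandra series of $\overline{\rho}$ collapses to a singleton,'' and you flag this as ``the main obstacle'' without resolving it. That step is precisely what is being left out. The paper's argument is shorter and avoids any analysis of Harish-Chandra series: by the version of Broué's theorem stated as Theorem~\ref{Thm Broué}, the regular block $\mathcal{A}_s=\overline{\mathbb{Z}}_{\ell}Ge_s^G\Modl$ is Morita equivalent to the block $\overline{\mathbb{Z}}_{\ell}Te_s^T\Modl$ of the \emph{finite torus} $T$ itself (not of the normalizer of an $\ell$-torus modulo that torus, which is the setting of the general abelian-defect Broué conjecture and is a different statement from the one being used here), and the torus block $\overline{\mathbb{Z}}_{\ell}Te_s^T\Modl$ is equivalent to $\overline{\mathbb{Z}}_{\ell}T_{\ell}\Modl$, a module category over a local ring with a \emph{unique} simple object (Remark~\ref{Remark: T_{ell}}). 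Hence $\mathcal{A}_s$ has a unique simple object $\pm R_T^G(\theta_s)$, which must be $\overline{\rho}$ itself, and which is supercuspidal \emph{by the definition} of regular supercuspidal block (Definition~\ref{Definition regular supercuspidal block}). Once one knows the block has a single simple, the conclusion is immediate; there is nothing to check about other simples arising via parabolic induction because there are none. So the missing idea in your proposal is the uniqueness of the simple object, and without it your argument does not close.
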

		
		The proof of the Main Theorem \ref{Thm Main} splits into two parts -- fully faithfulness and essentially surjectivity. It is convenient to have the following theorem available at an early stage, which implies fully faithfulness immediately and is also used in the proof of essentially surjectivity.
		
		\begin{theorem}\label{Thm Hom}
			Let $x, y$ be two vertices of the Bruhat-Tits building $\mathcal{B}(G, F)$. Let $\rho_1$ be a representation of the parahoric $G_x$ which is trivial on the pro-unipotent radical $G_x^+$. Let $\rho_2$ be a representation of $G_y$ which is trivial on $G_y^+$. Assume that one of them has supercuspidal reduction. Then exactly one of the following happens:
			\begin{enumerate}
				\item If there exists an element $g \in G(F)$ such that $g.x=y$, then
				$$\Hom_G(\cInd_{G_x}^{G(F)}\rho_1, \cInd_{G_y}^{G(F)}\rho_2)=\Hom_{G_x}(\rho_1, \rho_2(g-g^{-1})).$$
				\item If there is no elements $g \in G(F)$ such that $g.x=y$, then
				$$\Hom_G(\cInd_{G_x}^{G(F)}\rho_1, \cInd_{G_y}^{G(F)}\rho_2)=0.$$
			\end{enumerate}
		\end{theorem}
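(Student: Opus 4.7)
The plan is to combine smooth Frobenius reciprocity with Mackey's double-coset decomposition to rewrite the Hom-space as a sum indexed by $G_x\backslash G(F)/G_y$, interpret the index set as the $G_x$-orbits on the $G(F)$-orbit of $y$, and then use the supercuspidality hypothesis to kill every summand except the one (if any) corresponding to the orbit $\{x\}$.

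First, Frobenius reciprocity together with the Mackey decomposition of $\Res^{G(F)}_{G_x}\cInd_{G_y}^{G(F)}\rho_2$ yields
\[
  \Hom_G\!\bigl(\cInd_{G_x}^{G(F)}\rho_1,\;\cInd_{G_y}^{G(F)}\rho_2\bigr)
  \;\cong\;\bigoplus_{g\in G_x\backslash G(F)/G_y}\Hom_{G_x\cap gG_yg^{-1}}(\rho_1,\rho_2^g),
\]
where $\rho_2^g(h):=\rho_2(g^{-1}hg)$. Because $G$ is simply connected, each parahoric coincides with the full pointwise stabilizer of its facet, so $gG_yg^{-1}=G_{g.y}$ and each double coset $G_xgG_y$ corresponds to the $G_x$-orbit of $g.y$ in the $G(F)$-orbit of $y$. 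If there is a $g$ with $g.y=x$ (equivalently, after $g\mapsto g^{-1}$, a $g$ with $g.x=y$), then $gG_yg^{-1}=G_x$ and the summand reduces to $\Hom_{G_x}(\rho_1,\rho_2(g^{-1}(-)g))$, giving the formula of case (1); there is at most one such double coset. In case (2) no such $g$ exists and every summand corresponds to some $g$ with $g.y\neq x$.

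Second, assume $g.y\neq x$; I claim the corresponding summand vanishes. The key geometric input is the following consequence of the Bruhat-Tits / Moy-Prasad analysis of intersections of parahorics:

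\emph{Key Lemma.} For distinct vertices $x$ and $y'$ of $\mathcal{B}(G,F)$, the image of $G_x\cap G_{y'}^+$ in $\overline{G_x}=G_x/G_x^+$ contains the unipotent radical $U_P$ of some proper parabolic subgroup $P\subseteq\overline{G_x}$.

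Granted this applied with $y'=g.y$, suppose first that $\rho_1$ has supercuspidal reduction. Any $\phi\in\Hom_{G_x\cap gG_yg^{-1}}(\rho_1,\rho_2^g)$ is equivariant for the subgroup $G_x\cap gG_y^+g^{-1}$, on which $\rho_2^g$ is trivial (since $\rho_2$ is trivial on $G_y^+$); hence $\phi$ factors through the coinvariants $(\rho_1)_{G_x\cap gG_y^+g^{-1}}$. Because $\rho_1$ is trivial on $G_x^+$ these coinvariants coincide with $(\overline{\rho_1})_{\bar H}$, where $\bar H\subseteq\overline{G_x}$ is the image of $G_x\cap gG_y^+g^{-1}$; by the Key Lemma $\bar H\supseteq U_P$, so $(\overline{\rho_1})_{\bar H}$ is a quotient of the Jacquet module $(\overline{\rho_1})_{U_P}$, which vanishes by supercuspidality of $\overline{\rho_1}$. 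The case in which $\rho_2$ rather than $\rho_1$ has supercuspidal reduction is dual: one uses invariants of $G_x^+\cap gG_yg^{-1}$ on $\rho_2^g$ and the symmetric Key Lemma inside $\overline{G_y}$, together with vanishing of $U_Q$-invariants on a supercuspidal representation of a finite reductive group.

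The principal obstacle is the Key Lemma. Morally it is a standard statement coming from the affine-root-subgroup description of $G_x$ and $G_{y'}$ inside an apartment containing both vertices, but making it precise requires identifying exactly which affine root subgroups lie in $G_x\cap G_{y'}^+$ but not in $G_x^+$ and checking that their images generate the unipotent radical of a proper parabolic of $\overline{G_x}$. I plan to carry this out by choosing a common apartment, examining the set of affine roots that vanish on $x$ but are positive at $y'$, and then invoking the structure theorem for $\overline{G_x}$ as a finite reductive group with root system $\Phi_x$; alternatively one may cite the analogous statement in the literature on depth-zero representations.
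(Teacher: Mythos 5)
Your proposal is correct and follows essentially the same route as the paper: Frobenius reciprocity plus Mackey to reduce to a sum over double cosets, then for each double coset with $g.y\neq x$, pass to the image of $G_x\cap G_{g.y}^+$ (resp.\ $G_y \cap G_{g.x}^+$) in the finite reductive quotient $\overline{G_x}$ (resp.\ $\overline{G_y}$) and kill the $\Hom$ space by supercuspidality. Your ``Key Lemma'' is precisely the paper's Lemma~\ref{Lem Passage to Residue Field}, which is cited from Vign\'eras \cite[II.5.1.(k)]{vigneras1996representations} (there stated for two points in the closure of a common alcove, with the image being \emph{exactly} the unipotent radical of a proper parabolic; the paper then argues one may reduce to that configuration by choosing a good double-coset representative). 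So the affine-root analysis you plan to work out does not need to be done from scratch — you can replace it with a citation, as the paper does. One tiny remark: your observation that there is at most one double coset with $g.y=x$ is justified exactly as you say, using that $G$ is simply connected so $G_y$ is the full stabilizer of $y$; the paper gets the same uniqueness implicitly via Lemma~\ref{Lem Passage to Residue Field}(ii). Your choice to phrase the finite-group vanishing as vanishing of Jacquet coinvariants (resp.\ invariants) rather than as $\Hom_U(1,\overline{\rho})=\Hom_U(\overline{\rho},1)=0$ is purely cosmetic; the two are the same since $\rho_j$ is trivial on the corresponding pro-unipotent radical.
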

		
		The proof of the above theorem is a computation using Mackey's formula. See Section \ref{Sec Pf Thm Hom}.
		
		\begin{proof}[Proof of Theorem \ref{Thm Main}]
			
			Now we proceed by steps towards our goal: The compact induction $\cInd_{G_x}^{G(F)}$ induces an equivalence of categories $\mathcal{B}_{x,1} \cong \mathcal{C}_{x,1}$. 
			
			First, we show that $\cInd_{G_x}^{G(F)}: \mathcal{B}_{x,1} \to \mathcal{C}_{x,1}$ is well-defined. We need to show that the image of $\mathcal{B}_{x,1}$ under $\cInd_{G_x}^{G(F)}$ lies in $\mathcal{C}_{x,1}$. By Theorem \ref{Thm SC Red} and Theorem \ref{Thm Hom} above, $$\cInd_{G_x}^{G(F)}|_{\mathcal{B}_{x,1}}: \mathcal{B}_{x,1} \to \Rep_{\Lambda}(G(F))$$
			is fully faithful (see Lemma \ref{Lem Thm Hom implies fully faithful}, note that here we used Theorem \ref{Thm SC Red} that any representation in $\mathcal{B}_{x,1}$ has supercuspidal reduction so that we can apply Theorem \ref{Thm Hom}), hence an equivalence onto the essential image. Since $\mathcal{B}_{x,1}$ is indecomposable as an abelian category, so is its essential image (see Lemma \ref{Lem Indec}). Hence, its essential image is contained in a single block of $\Rep_{\Lambda}(G(F))$. But such a block must be $\mathcal{C}_{x,1}$ since $\cInd_{G_x}^{G(F)}$ maps $\rho$ to $\pi \in \mathcal{C}_{x,1}$. Therefore, $\cInd_{G_x}^{G(F)}: \mathcal{B}_{x,1} \to \mathcal{C}_{x,1}$ is well-defined.
			
			Second, we show that $\cInd_{G_x}^{G(F)}: \mathcal{B}_{x,1} \to \mathcal{C}_{x,1}$ is fully faithful. This is already noticed in the proof of ``well-defined" in the last paragraph. Indeed, 
			$$\Hom_G(\cInd_{G_x}^{G(F)}\rho_1, \cInd_{G_x}^{G(F)}\rho_2)=\Hom_{G_x}(\rho_1, \rho_2)$$
			by Theorem \ref{Thm SC Red} and Theorem \ref{Thm Hom} (see Lemma \ref{Lem Thm Hom implies fully faithful}.). Therefore, $\cInd_{G_x}^{G(F)}: \mathcal{B}_{x,1} \to \mathcal{C}_{x,1}$ is fully faithful.
			
			Finally, we show that $\cInd_{G_x}^{G(F)}: \mathcal{B}_{x,1} \to \mathcal{C}_{x,1}$ is essentially surjective. This will occupy the rest of this section. 
			
			The idea is to find a projective generator of $\mathcal{C}_{x,1}$ and show that it is in the essential image. Fix a vertex $x$ of the Bruhat-Tits building $\mathcal{B}(G, F)$ as before. Let $V$ be the set of equivalence classes of vertices of the Bruhat-Tits building $\mathcal{B}(G, F)$ up to $G(F)$-action. For $y \in V$, let $\sigma_y:=\cInd_{G_y^+}^{G_y}\Lambda$. Let $\Pi:=\bigoplus_{y \in V}\Pi_y$ where $\Pi_y:=\cInd_{G_y^+}^{G(F)}\Lambda$. Then $\Pi$ is a projective generator of the category of depth-zero representations $\Rep_{\Lambda}(G(F))_0$, see \cite[Appendix]{dat2009finitude}. Let $\sigma_{x,1}:=(\sigma_x)|_{\mathcal{B}_{x,1}} \in \mathcal{B}_{x,1} \xhookrightarrow{summand} \Rep_{\Lambda}(G_x)$ be the $\mathcal{B}_{x,1}$-summand of $\sigma_x$. Let $\Pi_{x,1}:=\cInd_{G_x}^{G(F)}\sigma_{x,1}$. Note that $\Pi_{x,1}$ is a summand of $\Pi_x=\cInd_{G_x}^{G(F)}\sigma_x$, hence a summand of $\Pi$. Using Theorem \ref{Thm Hom}, one can show that the rest of the summands of $\Pi$ do not interfere with $\Pi_{x,1}$ (see Lemma \ref{Lem Ortho} and Lemma \ref{Lem Gen} for precise meaning), hence $\Pi_{x,1}$ is a projective generator of $\mathcal{C}_{x,1}$. Let us state it as a Theorem, see Section \ref{Section projective generator} for details.
			
			\begin{theorem}\label{Thm Proj}
				$\Pi_{x,1}=\cInd_{G_x}^{G(F)}\sigma_{x,1}$ is a projective generator of $\mathcal{C}_{x,1}$.
			\end{theorem}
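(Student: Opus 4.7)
The plan is to exhibit $\Pi_{x,1}$ as a direct summand of the known projective generator $\Pi$ of $\Rep_{\Lambda}(G(F))_0$ whose complement is completely orthogonal to it; the associated projector will then be a central idempotent in $\End_G(\Pi)$ and cut out a summand subcategory containing $\mathcal{C}_{x,1}$. Projectivity of $\Pi_{x,1}$ in $\mathcal{C}_{x,1}$ is automatic: since $\sigma_{x,1}$ is by definition the direct summand of $\sigma_x = \cInd_{G_x^+}^{G_x}\Lambda$ attached to the block $\mathcal{B}_{x,1}$, the exactness of compact induction displays $\Pi_{x,1} = \cInd_{G_x}^{G(F)}\sigma_{x,1}$ as a summand of $\Pi_x$, hence of $\Pi$. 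The real content is that $\Pi_{x,1}$ generates $\mathcal{C}_{x,1}$, which I would extract from two lemmas.

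The first, Lemma \ref{Lem Ortho}, asserts
\begin{equation*}
\Hom_G(\Pi_{x,1}, \Pi^{x,1}) \;=\; \Hom_G(\Pi^{x,1}, \Pi_{x,1}) \;=\; 0,
\end{equation*}
where $\Pi^{x,1}$ is the complementary summand in $\Pi$, namely the direct sum of $\cInd_{G_y}^{G(F)}\sigma_{y,\alpha}$ over all $(y,\alpha) \neq (x,1)$ with $y \in V$ and $\alpha$ ranging over block indices of $\Rep_{\Lambda}(G_y)_0$. This follows from Theorem \ref{Thm Hom}, which applies because Theorem \ref{Thm SC Red} guarantees that $\sigma_{x,1}$ has supercuspidal reduction. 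For summands indexed by $y \in V$ with $y \neq x$ the vertices are not $G(F)$-conjugate, so the second alternative of Theorem \ref{Thm Hom} gives vanishing directly. For $y = x$ with $\alpha \neq 1$, the first alternative with $g = e$ reduces the Hom to $\Hom_{G_x}(\sigma_{x,1}, \sigma_{x,\alpha})$, which vanishes because the two sides lie in distinct blocks of $\Rep_{\Lambda}(G_x)_0$; the opposite direction is symmetric.

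The second, Lemma \ref{Lem Gen}, upgrades orthogonality to generation. The vanishing of cross-Homs yields the ring decomposition $\End_G(\Pi) \cong \End_G(\Pi_{x,1}) \times \End_G(\Pi^{x,1})$, so the projector $e$ onto $\Pi_{x,1}$ is a central idempotent. Since $\Pi$ is a progenerator of $\Rep_{\Lambda}(G(F))_0$, Morita theory translates this into a direct sum decomposition of the category, one factor $\mathcal{D}$ having $\Pi_{x,1}$ as progenerator. Because $\rho \in \mathcal{B}_{x,1}$ is a quotient of some $\sigma_{x,1}^{(J)}$ (as $\sigma_{x,1}$ is itself a projective generator of $\mathcal{B}_{x,1}$), applying the exact functor $\cInd_{G_x}^{G(F)}$ produces a surjection $\Pi_{x,1}^{(J)} \twoheadrightarrow \pi$, so $\pi \in \mathcal{D}$; since $\mathcal{D}$ is a union of blocks and $\mathcal{C}_{x,1}$ is the block containing $\pi$, we get $\mathcal{C}_{x,1} \subseteq \mathcal{D}$. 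For any $\tau \in \mathcal{C}_{x,1} \subseteq \mathcal{D}$, applying $e$ (which acts as the identity on $\tau$) to a surjection $\Pi^{(I)} \twoheadrightarrow \tau$ supplied by $\Pi$ being a generator yields the required surjection $\Pi_{x,1}^{(I)} \twoheadrightarrow \tau$.

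The main obstacle is the orthogonality computation in Lemma \ref{Lem Ortho}, specifically the case $y=x$, $\alpha \neq 1$: one must verify the hypothesis of Theorem \ref{Thm Hom} even though the \emph{source} $\sigma_{x,\alpha}$ may not have supercuspidal reduction. This is precisely where Theorem \ref{Thm SC Red} (which relies on the \textbf{regular} supercuspidal assumption) does its work: it ensures that $\sigma_{x,1}$, and not merely $\rho$ itself, has supercuspidal reduction, so that it can play the role of the ``representation with supercuspidal reduction'' in Theorem \ref{Thm Hom}. Once orthogonality is secured, the rest is a formal application of Morita theory and exactness of $\cInd_{G_x}^{G(F)}$.
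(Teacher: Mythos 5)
Your argument matches the paper's proof in structure: the same decomposition $\Pi = \Pi_{x,1} \oplus \Pi^{x,1}$, the same orthogonality input from Theorem~\ref{Thm Hom} via Theorem~\ref{Thm SC Red}, and the same Morita-theoretic conclusion that the orthogonal complement $\Pi^{x,1}$ cannot see $\mathcal{C}_{x,1}$. The only cosmetic difference is how you place $\pi$ in the correct summand — the paper computes $\Hom_G(\Pi_{x,1},\pi)=\Hom_{G_x}(\sigma_{x,1},\rho)\neq 0$ directly via Theorem~\ref{Thm Hom}, while you use that $\sigma_{x,1}$ is a progenerator of $\mathcal{B}_{x,1}$ to build a surjection $\Pi_{x,1}^{(J)}\twoheadrightarrow\pi$; both are valid and equivalent in content.
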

			
			Now we have found a projective generator $\Pi_{x,1}=\cInd_{G_x}^{G(F)}\sigma_{x,1}$ of $\mathcal{C}_{x,1}$, and it is clear that $\Pi_{x,1}$ is in the essential image of $\cInd_{G_x}^{G(F)}$. We now deduce from this that $\cInd_{G_x}^{G(F)}: \mathcal{B}_{x,1} \to \mathcal{C}_{x,1}$ is essentially surjective. Indeed, for any $\pi' \in \mathcal{C}_{x,1}$, we can resolve $\pi'$ by some copies of $\Pi_{x,1}$:
			$$\Pi_{x,1}^{\oplus I} \xrightarrow{f} \Pi_{x,1}^{\oplus J} \to \pi' \to 0.$$
			Using Theorem \ref{Thm Hom} and $\cInd_{G_x}^{G(F)}$ commutes with arbitrary direct sums (see Lemma \ref{Lem Sum}) we see that $f \in \Hom_G(\Pi_{x,1}^{\oplus I}, \Pi_{x,1}^{\oplus J})$ comes from a morphism $g \in \Hom_{G_x}(\sigma_{x,1}^{\oplus I}, \sigma_{x,1}^{\oplus J})$. Using $\cInd_{G_x}^{G(F)}$ is exact we see that $\pi'$ is the image of $\coker(g) \in \mathcal{B}_{x,1}$ under $\cInd_{G_x}^{G(F)}$. Therefore, $\cInd_{G_x}^{G(F)}: \mathcal{B}_{x,1} \to \mathcal{C}_{x,1}$ is essentially surjective.
			
		\end{proof}
		
		\begin{lemma}\label{Lem Thm Hom implies fully faithful}
			$\cInd_{G_x}^{G(F)}|_{\mathcal{B}_{x,1}}: \mathcal{B}_{x,1} \to \Rep_{\Lambda}(G(F))$ is fully faithful.
		\end{lemma}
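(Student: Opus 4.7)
The plan is to deduce the lemma directly from Theorem \ref{Thm Hom} applied to the special case $x = y$, with the regularity of the block entering only through Theorem \ref{Thm SC Red}. Concretely, I would take $\rho_1, \rho_2 \in \mathcal{B}_{x,1}$ arbitrary and note that by Theorem \ref{Thm SC Red} both of them are trivial on $G_x^+$ and have supercuspidal reduction, so the hypotheses of Theorem \ref{Thm Hom} are met (for the pair $(x, x)$).

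Next I would invoke the simply-connectedness assumption on $G$. Since $G$ is simply connected, the stabilizer in $G(F)$ of any vertex of the Bruhat--Tits building equals the parahoric subgroup; in particular, any $g \in G(F)$ satisfying $g \cdot x = x$ already lies in $G_x$. Therefore the twist $\rho_2(g - g^{-1})$ appearing in Theorem \ref{Thm Hom} is, for such $g$, canonically isomorphic to $\rho_2$ as a $G_x$-representation, and case (1) of that theorem collapses to the clean identification
\begin{equation*}
\Hom_{G(F)}\!\left(\cInd_{G_x}^{G(F)}\rho_1,\; \cInd_{G_x}^{G(F)}\rho_2\right) \;=\; \Hom_{G_x}(\rho_1, \rho_2).
\end{equation*}
This is exactly the statement that the functor $\cInd_{G_x}^{G(F)}$ is fully faithful on $\mathcal{B}_{x,1}$.

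The only subtlety I anticipate is bookkeeping around the twist $\rho_2(g - g^{-1})$: one should check that for $g \in G_x$ this twist really is the identity on $\Hom_{G_x}(\rho_1, \rho_2)$, which amounts to noting that inner automorphisms of $G_x$ act trivially on the $G_x$-equivariant Hom space. There is no genuine obstacle here, as everything is immediate from the Mackey computation recorded in Theorem \ref{Thm Hom} combined with the two inputs above (Theorem \ref{Thm SC Red} to ensure we can apply Theorem \ref{Thm Hom} to every member of the block, and simple-connectedness to eliminate the nontrivial coset representatives).
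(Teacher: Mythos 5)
Your proof is correct and follows essentially the same route as the paper: verify via Theorem \ref{Thm SC Red} that every object of $\mathcal{B}_{x,1}$ has supercuspidal reduction, then apply case (1) of Theorem \ref{Thm Hom} with $x = y$. The only difference is a small detour: you argue that any $g$ with $g \cdot x = x$ lies in $G_x$ (using simple-connectedness) so that the twist $\rho_2(g - g^{-1})$ is inner and hence harmless. The paper simply reads case (1) with the obvious choice $g = 1$, which is always available when $x = y$ and immediately gives the untwisted $\Hom_{G_x}(\rho_1,\rho_2)$. Your extra step is not wrong — it is the argument one would need if one insisted on showing independence of the choice of $g$ — but it is not necessary here, since $g=1$ suffices and requires no appeal to simple-connectedness at this point (that assumption is already used more substantively in the Mackey computation underlying Theorem \ref{Thm Hom}, where it guarantees that the vertex stabilizer is a single parahoric double coset).
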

		
		\begin{proof}
			Let $\rho_1, \rho_2 \in \mathcal{B}_{x,1}$. By the regular supercuspidal reduction assumption of $\rho$ and Theorem \ref{Thm SC Red}, $\rho_1, \rho_2$ has supercuspidal reduction. Hence the assumptions of Theorem \ref{Thm Hom} are satisfied and we compute using the first case of Theorem \ref{Thm Hom} that
			$$\Hom_G(\cInd_{G_x}^{G(F)}\rho_1, \cInd_{G_x}^{G(F)}\rho_2) \cong \Hom_{G_x}(\rho_1, \rho_2).$$
			In other words, $\cInd_{G_x}^{G(F)}|_{\mathcal{B}_{x,1}}: \mathcal{B}_{x,1} \to \Rep_{\Lambda}(G(F))$ is fully faithful.
		\end{proof}
		
		\begin{lemma}\label{Lem Indec}
			The image of $\mathcal{B}_{x,1}$ under $\cInd_{G_x}^{G(F)}$ is indecomposable as an abelian category.
		\end{lemma}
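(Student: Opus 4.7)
The plan is to leverage two easy formal properties of $\cInd_{G_x}^{G(F)}$: it is exact (being left adjoint to the smooth restriction functor), and its restriction to $\mathcal{B}_{x,1}$ is fully faithful by Lemma \ref{Lem Thm Hom implies fully faithful}. Combining these, I first argue that the essential image is an abelian subcategory of $\Rep_{\Lambda}(G(F))$ and that the induced functor $\mathcal{B}_{x,1} \to \mathrm{Ess.Im}$ is an equivalence of abelian categories. The key observation is that if $f: \cInd_{G_x}^{G(F)}\rho_1 \to \cInd_{G_x}^{G(F)}\rho_2$ lies in the image, then by full faithfulness $f = \cInd_{G_x}^{G(F)}(g)$ for a unique $g$, and by exactness its kernel and cokernel in $\Rep_{\Lambda}(G(F))$ are $\cInd_{G_x}^{G(F)}(\ker g)$ and $\cInd_{G_x}^{G(F)}(\coker g)$, which live in the image.

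The second ingredient is that $\mathcal{B}_{x,1}$ is, by definition, a block of $\Rep_{\Lambda}(G_x)$, i.e., an indecomposable direct summand, so it is indecomposable as an abelian category. Since indecomposability is an intrinsic property preserved under equivalence of abelian categories, the essential image is indecomposable as well. Equivalently, one may argue by contradiction: a non-trivial decomposition $\mathrm{Ess.Im} \cong \mathcal{E}_1 \oplus \mathcal{E}_2$ would transport through the equivalence to a non-trivial decomposition of $\mathcal{B}_{x,1}$, contradicting its being a block.

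There is no serious obstacle in this argument; the only point that deserves explicit attention is the verification that the essential image is closed under kernels and cokernels in the ambient category $\Rep_{\Lambda}(G(F))$, but this is immediate from the exactness of compact induction combined with full faithfulness, as explained above.
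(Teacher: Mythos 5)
Your argument is essentially identical to the paper's: both use the full faithfulness established earlier together with the exactness of $\cInd_{G_x}^{G(F)}$ (Lemma \ref{Lem Sum}) to upgrade to an equivalence of abelian categories onto the essential image, and then transport indecomposability of the block $\mathcal{B}_{x,1}$ across that equivalence. You've just spelled out the closure-under-kernels/cokernels check slightly more explicitly than the paper does.
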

		
		\begin{proof}
			The point is that $\cInd_{G_x}^{G(F)}|_{\mathcal{B}_{x,1}}: \mathcal{B}_{x,1} \to \Rep_{\Lambda}(G(F))$ is not only fully faithful, i.e., an equivalence of categories onto the essential image, but also an equivalence of \textbf{abelian} categories onto the essential image. Indeed, it suffices to show that $\cInd_{G_x}^{G(F)}|_{\mathcal{B}_{x,1}}: \mathcal{B}_{x,1} \to \Rep_{\Lambda}(G(F))$ preserves kernels, cokernels, and finite (bi-)products. But this follows from the next Lemma \ref{Lem Sum}.
			
			Assume otherwise that the essential image of $\mathcal{B}_{x,1}$ under $\cInd_{G_x}^{G(F)}$ is decomposable, then so is $\mathcal{B}_{x,1}$. But $\mathcal{B}_{x,1}$ is a block, hence indecomposable, contradiction!
		\end{proof}
		
		\begin{lemma}\label{Lem Sum}
			$\cInd_{G_x}^{G(F)}$ is exact and commutes with arbitrary direct sums.
		\end{lemma}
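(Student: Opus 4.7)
The plan is to reduce both claims to the explicit coset decomposition of compact induction from an open subgroup. Since $G_x$ is open in $G(F)$, the right coset space $G_x \backslash G(F)$ is discrete; fix once and for all a set of representatives $\{g_i\}_{i \in I}$ so that $G(F) = \bigsqcup_{i \in I} G_x g_i$. An element $f \in \cInd_{G_x}^{G(F)}\rho$ is, by definition, a function $f: G(F) \to \rho$ which is $G_x$-equivariant on the left and whose support meets only finitely many right cosets $G_x g_i$. Evaluation at the $g_i$ therefore yields a natural $\Lambda$-module isomorphism
$$\cInd_{G_x}^{G(F)}\rho \;\cong\; \bigoplus_{i \in I} \rho,$$
where the direct sum (and not the direct product) appears precisely because of the compact-support condition.

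Exactness follows at once: a short exact sequence $0 \to \rho_1 \to \rho_2 \to \rho_3 \to 0$ in $\Rep_{\Lambda}(G_x)$ is sent, as a sequence of $\Lambda$-modules, to the $I$-fold direct sum of itself, and direct sums of short exact sequences of $\Lambda$-modules are again exact. (Since exactness in $\Rep_{\Lambda}(G(F))$ is tested on underlying $\Lambda$-modules, the non-equivariance of the chosen coset decomposition does no harm.) Similarly, for a family $\{\rho_\alpha\}_{\alpha \in A}$, commutation with direct sums becomes the Fubini-type identity
$$\cInd_{G_x}^{G(F)}\!\left(\bigoplus_{\alpha \in A}\rho_\alpha\right) \;\cong\; \bigoplus_{i \in I}\bigoplus_{\alpha \in A}\rho_\alpha \;\cong\; \bigoplus_{\alpha \in A}\bigoplus_{i \in I}\rho_\alpha \;\cong\; \bigoplus_{\alpha \in A}\cInd_{G_x}^{G(F)}\rho_\alpha,$$
and the natural candidate map from right to left is easily checked to be $G(F)$-equivariant.

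The only point that requires (minor) care is that the interchange in the middle is legitimate: an element on the right has finitely many nonzero $\alpha$-components, each of which is supported on finitely many $i$, so its image is supported on finitely many pairs $(\alpha,i)$ and in particular on finitely many $i$; conversely, an element on the left has finite support in $i$, and for each such $i$ the value lies in a finite subsum over $\alpha$. No real obstacle is anticipated; the lemma is a standard consequence of the fact that $G_x$ is open, and the proof is essentially a bookkeeping exercise on the coset decomposition above.
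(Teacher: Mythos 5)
Your proof is correct, but takes a different route from the paper's. The paper simply cites Vign\'eras (I.5.10) for exactness and then invokes the abstract fact that $\cInd_{G_x}^{G(F)}$ is a left adjoint of restriction (Vign\'eras I.5.7), hence commutes with all colimits and in particular with direct sums. You instead establish both claims at once and from scratch, via the explicit evaluation isomorphism $\cInd_{G_x}^{G(F)}\rho \cong \bigoplus_{G_x\backslash G(F)}\rho$ of underlying $\Lambda$-modules, valid because $G_x$ is open (so cosets are discrete, and for $G_x$ also compact the smoothness of a left-$G_x$-equivariant compactly supported function is automatic). The two approaches are complementary: the paper's is shorter and leans on standard references, and the left-adjoint argument gives preservation of all colimits for free; yours is self-contained, makes transparent both why exactness holds (it does \emph{not} follow merely from being a left adjoint) and why the direct sum rather than the direct product appears, and correctly notes that the non-$G(F)$-equivariance of the coset splitting is harmless because exactness is tested on underlying modules. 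Both are valid proofs of the lemma.
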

		
		\begin{proof}
			For the statement that $\cInd_{G_x}^{G(F)}$ is exact, we refer to \cite[I.5.10]{vigneras1996representations}.
			
			We show that $\cInd_{G_x}^{G(F)}$ commutes with arbitrary direct sums. Indeed, $\cInd_{G_x}^{G(F)}$ is a left adjoint (see \cite[I.5.7]{vigneras1996representations}), hence commutes with arbitrary colimits. In particular, it commutes with arbitrary direct sums.
		\end{proof}

		\section{Regular supercuspidal blocks for finite groups of Lie type}\label{Sec Reg Cusp}
		
		In this section, we prove Theorem \ref{Thm SC Red}. As mentioned before, we made the \textbf{regular} assumption so that the conclusion of Theorem \ref{Thm SC Red} -- all representations in such a block have supercuspidal reduction -- is true. So the readers are welcome to skip this section for a first reading and pretend that we begin with a block in which all representations have supercuspidal reduction.
		
		Fix a prime number $p$. Let $\ell$ be a prime number different from $p$. Let $q$ be a power of $p$. Let $\Lambda:=\overline{\mathbb{Z}}_{\ell}$ be the coefficients of representations.
		
		The main body of this section is to define regular supercuspidal blocks with $\Lambda=\overline{\mathbb{Z}}_{\ell}$-coefficients of a finite group of Lie type, and to show that a regular supercuspidal block consists purely of supercuspidal representations.

		\begin{definition}[{\cite[I.4.1]{vigneras1996representations}}]
			\begin{enumerate}Let $\Lambda'$ be any ring.
				\item Let $H$ be a profinite group, a \textbf{representation of $H$ with $\Lambda'$-coefficients} $(\pi, V)$ is a $\Lambda'$-module $V$, together with a $H$-action $\pi: H \to GL_{\Lambda'}(V)$.
				\item A representation of $H$ with $\Lambda'$-coefficients is called \textbf{smooth} if for any $v \in V$, the stabilizer $Stab_H(v) \subseteq H$ is open.
			\end{enumerate}
		\end{definition}
		
		From now on, all representations are assumed to be smooth. The category of smooth representations of $H$ with $\Lambda'$-coefficients is denoted by $\Rep_{\Lambda'}(H)$.
		
		\subsection{Regular blocks}
		
		\textbf{The following notations are used in this subsection only.} Let $\mathcal{G}$ be a split reductive group scheme over $\mathbb{Z}$. Let $\mathbb{G}:=\mathcal{G}(\overline{\mathbb{F}_q})$, $G:=\mathbb{G}^F=\mathcal{G}(\mathbb{F}_q)$, where $F$ is the Frobenius. By abuse of notation, we sometimes identify the group scheme $\mathcal{G}_{\overline{\mathbb{F}_q}}$ with its $\overline{\mathbb{F}_q}$-points $\mathbb{G}$. Let $\mathbb{G}^*$ be the dual group (over $\overline{\mathbb{F}_q}$) of $\mathbb{G}$, and $F^*$ the dual Frobenius (see \cite[Section 4.2]{carter1985finite}). Fix an isomorphism $\overline{\mathbb{Q}}_{\ell} \cong \mathbb{C}$. 
		
		The definition of regular supercuspidal blocks and regular supercuspidal representations of a finite group of Lie type $\Gamma$ involves modular Deligne-Lusztig theory and block theory. We refer to \cite{deligne1976representations}, \cite{carter1985finite}, and \cite{digne2020representations} for Deligne-Lusztig theory, \cite{michel1989bloc} and \cite{broue1990isometries} for modular Deligne-Lusztig theory, and \cite[Appendix B]{bonnafe2010representations} for generalities on blocks. 
		
		First, let us recall a result in Deligne-Lusztig theory (see \cite[Proposition 11.1.5]{digne2020representations}). 
		
		\begin{proposition}\label{Prop dual torus}
			The set of $\mathbb{G}^F$-conjugacy classes of pairs $(\mathbb{T}, \theta)$, where  $\mathbb{T}$ is a $F$-stable maximal torus of  $\mathbb{G}$ and $\theta \in \widehat{\mathbb{T}^F}$, is in non-canonical bijection with the set of $\mathbb{G^*}^{F^*}$-conjugacy classes of pairs $(\mathbb{T}^*, s)$, where $s$ is a semisimple element of $\mathbb{G}^*$ and $\mathbb{T}^*$ is a $F^*$-stable maximal torus of $\mathbb{G}^*$ such that $s \in {\mathbb{T}^*}^{F^*}$.  Moreover, we can and will fix a compatible system of isomorphisms $\mathbb{F}_{q^n}^* \cong \mathbb{Z}/(q^n-1)\mathbb{Z}$ to pin down this bijection.
		\end{proposition}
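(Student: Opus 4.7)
The plan is to establish the bijection by decomposing the data on each side into two pieces: the torus (up to $\mathbb{G}^F$- or $\mathbb{G}^{*F^*}$-conjugacy) and the character/element on that torus, and then matching these pieces using standard duality of diagonalizable group schemes.

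First, I would recall that by Lang--Steinberg, two $F$-stable maximal tori $\mathbb{T}_1, \mathbb{T}_2$ of $\mathbb{G}$ are $\mathbb{G}^F$-conjugate if and only if, writing $\mathbb{T}_i = g_i \mathbb{T}_0 g_i^{-1}$ for a fixed reference $F$-stable torus $\mathbb{T}_0$, the $F$-conjugacy classes of $g_i^{-1} F(g_i)$ in the Weyl group $W = N_\mathbb{G}(\mathbb{T}_0)/\mathbb{T}_0$ coincide. Hence the set of $\mathbb{G}^F$-conjugacy classes of $F$-stable maximal tori is in bijection with the set of $F$-conjugacy classes in $W$. The same statement on the dual side gives a bijection with $F^*$-conjugacy classes in $W^*$. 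A choice of pinning (or of the reference tori) identifies $W \cong W^*$, with $F$-conjugacy matching $F^*$-conjugacy, so the two sets of torus-conjugacy-classes are in bijection via $\mathbb{T} \leftrightarrow \mathbb{T}^*$.

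Second, having fixed compatible reference tori $\mathbb{T}$ and $\mathbb{T}^*$, I would identify $\widehat{\mathbb{T}^F}$ with ${\mathbb{T}^*}^{F^*}$. This uses: (a) Lang's theorem, which yields a short exact sequence $1 \to \mathbb{T}^F \to \mathbb{T} \xrightarrow{F-1} \mathbb{T} \to 1$, hence by taking $\Hom(-,\overline{\mathbb{Q}}_\ell^*)$ together with the fixed roots-of-unity isomorphism we get
\[
\widehat{\mathbb{T}^F} \;\cong\; X^*(\mathbb{T}) \otimes_\mathbb{Z} (\overline{\mathbb{Q}}_\ell^*)_{\mathrm{tors}} / (F-1);
\]
(b) the duality $X^*(\mathbb{T}) \cong X_*(\mathbb{T}^*)$ together with $X_*(\mathbb{T}^*) \otimes (\overline{\mathbb{F}_q}^*)_{\mathrm{tors}} \cong \mathbb{T}^*(\overline{\mathbb{F}_q})$; (c) the fact that under duality $F$ corresponds to $F^*$. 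The fixed isomorphisms $\mathbb{F}_{q^n}^* \cong \mathbb{Z}/(q^n-1)\mathbb{Z}$ and $\overline{\mathbb{Q}}_\ell \cong \mathbb{C}$ match up the roots-of-unity groups coherently, giving a canonical bijection $\widehat{\mathbb{T}^F} \cong {\mathbb{T}^*}^{F^*}$.

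Third, I would check that this torus-level bijection is equivariant for the respective normalizer/Weyl group actions, so that glueing over all tori produces the claimed bijection of $\mathbb{G}^F$-conjugacy classes of $(\mathbb{T}, \theta)$ with $\mathbb{G}^{*F^*}$-conjugacy classes of $(\mathbb{T}^*, s)$. Concretely, one sends $(\mathbb{T}, \theta)$ to $(\mathbb{T}^*, s)$ where $s \in {\mathbb{T}^*}^{F^*}$ is the element corresponding to $\theta$ under the identification above.

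The main obstacle is ensuring naturality of the character/element correspondence as $\mathbb{T}$ varies within its $\mathbb{G}^F$-conjugacy class, and keeping track of the normalizer action (which is why the bijection is on conjugacy classes rather than on pairs and why it is only canonical after fixing the auxiliary isomorphisms). In practice this is handled by first doing the construction for a single representative torus in each $F$-twisted $W$-class and then transporting via $\mathbb{G}^F$-conjugation; the compatibilities reduce to the functoriality of the duality $X^*(\mathbb{T}) \cong X_*(\mathbb{T}^*)$ under Weyl group automorphisms. For a complete argument I would refer to \cite[Proposition 11.1.5]{digne2020representations} or \cite[Section 4.3]{carter1985finite}.
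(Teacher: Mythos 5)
The paper does not give its own proof of this proposition: it is introduced as a recalled fact with a pointer to \cite[Proposition 11.1.5]{digne2020representations} (and one could equally cite \cite[Section 4.3--4.4]{carter1985finite}). Your sketch is precisely the standard argument those references give --- parametrize $\mathbb{G}^F$-classes of $F$-stable maximal tori by $F$-conjugacy classes in $W$ via Lang--Steinberg, match these with $F^*$-conjugacy classes in $W^*$ through a fixed pinning, identify $\widehat{\mathbb{T}^F}$ with ${\mathbb{T}^*}^{F^*}$ by torus duality together with the fixed system of isomorphisms $\mathbb{F}_{q^n}^* \cong \mathbb{Z}/(q^n-1)\mathbb{Z}$, and check Weyl-group equivariance --- so you are reproducing the proof the paper defers to rather than taking a different route. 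The one thing worth flagging is that in your step (a) the functor you really want is $\Hom(-, (\overline{\mathbb{Q}}_\ell^*)_{\mathrm{tors}})$ applied to the Lang sequence viewed as a sequence of abstract abelian groups, and the resulting description of $\widehat{\mathbb{T}^F}$ as a quotient (rather than a subgroup) requires passing through the snake-lemma identification $\widehat{\mathbb{T}^F} \cong \ker\bigl(F-1 : X^*(\mathbb{T})\otimes(\overline{\mathbb{F}}_q^*) \to X^*(\mathbb{T})\otimes(\overline{\mathbb{F}}_q^*)\bigr)$, not a cokernel as written; the end result $\widehat{\mathbb{T}^F} \cong {\mathbb{T}^*}^{F^*}$ is nonetheless correct. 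Since the bijection is non-canonical exactly because it depends on the pinning and on the compatible system of root-of-unity isomorphisms, your remark to that effect in the final paragraph is the right thing to emphasize.
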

		
		Now let $s$ be a \textbf{strongly regular semisimple} 
		element of $G^*={\mathbb{G}^*}^{F^*}$ (note that we require $s$ to be fixed by $F^*$ here), i.e., the centralizer $C_{\mathbb{G}^*}(s)$ is a $F^*$-stable maximal torus, denoted $\mathbb{T}^*$. Let $\mathbb{T}$ be the dual torus of $\mathbb{T}^*$. Let $T=\mathbb{T}^F$ and $T^*={\mathbb{T}^*}^{F^*}$. Let $T_\ell$ denote the $\ell$-part of $T$.
		
		Recall for $s$ strongly regular semisimple, the (rational) Lusztig series $\mathcal{E}(G, (s))$ consists of only one element, namely, $\pm R_T^G(\hat{s})$, where $\hat{s}=\theta$ is such that $(\mathbb{T}, \theta)$ corresponds to $(\mathbb{T}^*, s)$ via the bijection in Proposition \ref{Prop dual torus}. Here and after, the sign $\pm$ is taken such that $\pm R_T^G(\hat{s})$ is an honest representation (see \cite[Section 7.5]{carter1985finite}).
		
		\textbf{From now on, we assume moreover that $s \in {\mathbb{G}^*}^{F^*}$ has order prime to $\ell$.} In other words, we assume that $s \in G^*={\mathbb{G}^*}^{F^*}$ is a \textbf{strongly regular semisimple $\ell'$-element}. We are going to define regular blocks. We refer to \cite[Appendix B]{bonnafe2010representations} for generalities on blocks.
		
		Define the \textbf{$\ell$-Lusztig series} 
		$$\mathcal{E}_\ell(G, (s)):=\{\pm R_T^G(\hat{s}\eta)\;|\; \eta \in \widehat{T_\ell}\}.$$ Note the notation $\mathcal{E}_\ell(T, (s))$ also makes sense by putting $G=T$.
		
		By \cite{michel1989bloc}, $\mathcal{E}_\ell(G, (s))$ is a union of $\ell$-blocks of $\Rep_{\overline{\mathbb{Q}}_{\ell}}(G)$. Such a block (or more precisely, a union of blocks) is called a \textbf{($\ell$-)regular block}. Let $e_s^G$ denotes the corresponding central idempotent in the group algebra $\overline{\mathbb{Z}}_{\ell}G$. Note $e_s^T$ also makes sense by putting $G=T$. We shall see later that a regular block is indeed a block, i.e., indecomposable.\footnote{This follows from, for example, Broué's equivalence. See Theorem \ref{Thm Broué} below.}
		
		\begin{definition}[Regular blocks]\label{Def Regular Block}
			Let $s \in G^*={\mathbb{G}^*}^{F^*}$ be a strongly regular semisimple $\ell'$-element.
			We call the summand $\overline{\mathbb{Z}}_{\ell}Ge_s^G$ of the group algebra $\overline{\mathbb{Z}}_{\ell}G$ corresponding to the central idempotent $e_s^G\overline{\mathbb{Z}}_{\ell}G \in \overline{\mathbb{Z}}_{\ell}G$ the \textbf{regular block} associated to $s$. Let $\mathcal{A}_s:=\overline{\mathbb{Z}}_{\ell}Ge_s^G\Modl$ be the corresponding category of modules, this is also referred to as a regular block, by abuse of notation.
			
			
		\end{definition}
		

		
		Thanks to \cite{broue1990isometries}, we understand the block $\mathcal{A}_s=\overline{\mathbb{Z}}_{\ell}Ge_s^G\Modl$ quite well. Roughly speaking, it is equivalent to the block a finite torus, via Deligne-Lusztig induction. This is what we are going to explain now.
		
		Let $\mathbb{B} \subseteq \mathbb{G}$ be a Borel subgroup containing our torus $\mathbb{T}$, and let $\mathbb{U}$ be the unipotent radical of $\mathbb{B}$. Let $X_{\mathbb{U}}$ be the Deligne-Lusztig variety defined by
		$$X_{\mathbb{U}}:=\{g \in \mathbb{G} \;|\; g^{-1}F(g) \in \mathbb{U}\}.$$
		
		The main result of \cite{broue1990isometries} is the following: The Deligne-Lusztig induction 
		$$\pm R_T^G: \overline{\mathbb{Z}}_{\ell}T\Modl \to \overline{\mathbb{Z}}_{\ell}G\Modl$$ induces an equivalence of categories between the blocks $\overline{\mathbb{Z}}_{\ell}Te_s^T\Modl$ and $\overline{\mathbb{Z}}_{\ell}Ge_s^G\Modl$.\footnote{In particular, one can deduce that the irreducible objects in $\overline{\mathbb{F}}_{\ell}Ge_s^G\Modl$ lift to $\overline{\mathbb{Z}}_{\ell}$.} More precisely, let us state it as the following theorem.
		
		\begin{theorem}[Broué's equivalence, {\cite[Theorem 3.3]{broue1990isometries}}]\label{Thm Broué}
			With the previous assumptions and notations, assume that $X_{\mathbb{U}}$ is affine of dimension $d$ (which is the case if $q$ is greater than the Coxeter number of $\mathbb{G}$.). Then the cohomology complex $R\Gamma_c(X_{\mathbb{U}}, \overline{\mathbb{Z}}_{\ell})=R\Gamma_c(X_{\mathbb{U}}, {\mathbb{Z}_\ell}) \otimes_{\mathbb{Z}_\ell}$$\overline{\mathbb{Z}}_{\ell}$ is concentrated in degree $d=dimX_{\mathbb{U}}$. In addition, the $(\overline{\mathbb{Z}}_{\ell}Ge_s^G, \overline{\mathbb{Z}}_{\ell}Te_s^T)$-bimodule $e_s^GH_c^d(X_{\mathbb{U}}, \overline{\mathbb{Z}}_{\ell})e_s^T$ induces an equivalence of categories
			$$e_s^GH_c^d(X_{\mathbb{U}}, \overline{\mathbb{Z}}_{\ell})e_s^T \otimes_{\overline{\mathbb{Z}}_{\ell}Te_s^T}-: \overline{\mathbb{Z}}_{\ell}Te_s^T\Modl \longrightarrow \overline{\mathbb{Z}}_{\ell}Ge_s^G\Modl.$$
		\end{theorem}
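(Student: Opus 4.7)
The plan is to follow Broué's original two-step strategy: first establish that $R\Gamma_c(X_{\mathbb{U}}, \overline{\mathbb{Z}}_\ell)$, after projection by the block idempotents $e_s^G$ and $e_s^T$, is concentrated in degree $d$; and second, verify that the resulting bimodule implements a Morita equivalence between the two blocks. I would begin with the concentration statement. The upper bound $H^i_c(X_{\mathbb{U}}, \overline{\mathbb{Z}}_\ell) = 0$ for $i > d$ is immediate from Artin's vanishing theorem for compactly supported cohomology on an affine variety of dimension $d$. The lower bound requires more work; I would invoke Poincaré duality, which for the smooth $d$-dimensional $X_{\mathbb{U}}$ exchanges $H^i_c$ with $H^{2d-i}$, and combine this with Deligne-Lusztig's virtual character computation of $R_T^G \hat{s}$ over $\overline{\mathbb{Q}}_\ell$: strong regularity of $s$ forces only the middle cohomology to contribute to the Lusztig series attached to $s$. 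Integrality/torsion-freeness of $R\Gamma_c(X_{\mathbb{U}}, \mathbb{Z}_\ell)$ would then lift the vanishing to $\overline{\mathbb{Z}}_\ell$.

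Once concentration is in hand, set $M := e_s^G H^d_c(X_{\mathbb{U}}, \overline{\mathbb{Z}}_\ell) e_s^T$. I would verify the usual Morita criterion: that $M$ is finitely generated projective on each side, and that the natural maps
\[
\overline{\mathbb{Z}}_\ell G e_s^G \longrightarrow \End_{\overline{\mathbb{Z}}_\ell T e_s^T}(M)^{\mathrm{op}}, \qquad \overline{\mathbb{Z}}_\ell T e_s^T \longrightarrow \End_{\overline{\mathbb{Z}}_\ell G e_s^G}(M)
\]
are isomorphisms. Projectivity on the $T$-side follows from a free-quotient description of a suitable covering of $X_{\mathbb{U}}$ (the Lang-type fibration $g \mapsto g^{-1}F(g)$), which exhibits $H^d_c$ as a summand of a permutation $\overline{\mathbb{Z}}_\ell T$-module; the $\ell'$-order hypothesis on $s$ guarantees that $e_s^T$ cuts out a projective summand. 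Projectivity on the $G$-side can be deduced from the classical fact that Deligne-Lusztig induction preserves projectivity after block projection, once again exploiting the $\ell'$ hypothesis on $s$. The endomorphism identities reduce modulo the maximal ideal of $\overline{\mathbb{Z}}_\ell$ and via Nakayama's lemma to their counterparts over $\overline{\mathbb{Q}}_\ell$, where strong regularity of $s$ makes $\mathcal{E}(G, (s))$ a singleton and Lusztig's orthogonality relations give the required matching of multiplicity spaces.

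The main obstacle I expect is the integral behavior of the cohomology complex: one must rule out torsion sitting in a wrong degree and ensure that the bimodule remains projective integrally, not merely rationally. Over $\overline{\mathbb{Q}}_\ell$ nearly everything reduces to character computations (Deligne-Lusztig orthogonality, Mackey formula), but the passage to $\overline{\mathbb{Z}}_\ell$ relies essentially on the affineness of $X_{\mathbb{U}}$ -- and hence on the Coxeter number hypothesis on $q$ -- together with the disjointness of modular Lusztig series, which in the regular case is exactly the content that pins down the block decomposition. In effect, this step is the heart of \cite{broue1990isometries}, and it is what upgrades a derived (or virtual) identity of characters to a genuine Morita equivalence between $\overline{\mathbb{Z}}_\ell T e_s^T\Modl$ and $\overline{\mathbb{Z}}_\ell G e_s^G\Modl$.
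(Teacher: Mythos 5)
The paper does not prove this result; it is quoted verbatim from Broué's 1990 paper \cite[Theorem 3.3]{broue1990isometries} and used as a black box, so there is no in-paper proof to compare against. What you have written is a reconstruction of Broué's argument, and its overall architecture (concentration in the middle degree, then a Morita criterion verified via projectivity of the bimodule and matching of endomorphism algebras) is the right skeleton. But the concentration step is where you go wrong, and it is the step that the Coxeter-number hypothesis is actually for.

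You write that "$H^i_c(X_{\mathbb{U}},\overline{\mathbb{Z}}_\ell)=0$ for $i>d$ is immediate from Artin's vanishing theorem for compactly supported cohomology on an affine variety", and then treat the direction $i<d$ as the hard one to be extracted from Poincaré duality and Lusztig series. This is backwards. Artin--Grothendieck vanishing on an affine variety of dimension $d$ gives vanishing of \emph{ordinary} cohomology $H^i$ in degrees $i>d$; combined with Poincaré duality on the smooth $d$-dimensional $X_{\mathbb{U}}$, this yields $H^i_c(X_{\mathbb{U}},\overline{\mathbb{Z}}_\ell)=0$ for $i<d$, which is the easy direction. There is no "Artin vanishing for $H^i_c$ above the dimension": already $H^2_c(\mathbb{A}^1,\overline{\mathbb{Z}}_\ell)\neq 0$, and in general $H^{2d}_c$ of an affine variety carries a fundamental class. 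The content of Broué's concentration statement is precisely the vanishing for $d<i\leq 2d$ (after the relevant block projection), and that is where the $\ell'$-regular $s$, the disjointness of modular Lusztig series, and the integral torsion-freeness arguments all enter; they cannot be relegated to a footnote about "lifting to $\overline{\mathbb{Z}}_\ell$." Once this direction is correctly identified as the hard one, the rest of your sketch (projectivity of the bimodule via the Lang fibration and the $\ell'$ hypothesis, endomorphism-ring matching reducing to $\overline{\mathbb{Q}}_\ell$ via Nakayama, strong regularity making $\mathcal{E}(G,(s))$ a singleton) is a fair summary of the Broué--Michel machinery, but you should not present it as your own proof; cite Broué, as the paper does.
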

		
		\textbf{From now on, we assume that the above theorem holds for all finite groups of Lie type that we encounter in this paper.} We hope that this is not a severe restriction. This is the case at least when $q$ is greater than the Coxeter number of $\mathbb{G}$.
		
		\begin{remark}\label{Remark: T_{ell}}
			The category $\overline{\mathbb{Z}}_{\ell}Te_s^T\Modl$ is equivalent to the category $\overline{\mathbb{Z}}_{\ell}T_{\ell}\Modl$, where $T_{\ell}$ is the order-$\ell$-part of $T$. $\overline{\mathbb{Z}}_{\ell}T_{\ell}\Modl$ is essentially the category of representations of some product of $\mathbb{Z}/\ell^{k_i}\mathbb{Z}$'s. In particular, it has a unique irreducible representation (simple object), which is already defined over $\overline{\mathbb{F}}_{\ell}$. Let us denote its corresponding character by $\theta_s: T \to \overline{\mathbb{F}}_{\ell}^*$. Accordingly, $\overline{\mathbb{Z}}_{\ell}Ge_s^G\Modl$ has a unique simple object $\pm R_T^G(\theta_s)$.
		\end{remark}
		

		\subsection{Regular supercuspidal blocks}
		
		Let us first recall the definition of supercuspidal representations.
		
		\begin{definition}\label{Def supercuspidal}
			
			\begin{enumerate}
				\item An irreducible representation is called \textbf{supercuspidal} if it does not occur as a subquotient of any proper parabolic induction.
				\item A representation is called \textbf{supercuspidal} if all its irreducible subquotients are supercuspidal.
			\end{enumerate}
		\end{definition}
		
		Now let us define regular supercuspidal blocks and regular supercuspidal representations.
		
		\begin{definition}\label{Definition regular supercuspidal block}
			By a \textbf{regular supercuspidal block}, we mean a regular block $\mathcal{A}_s$ whose unique simple object $\pm R_T^G(\theta_s)$ (see Remark \ref{Remark: T_{ell}} for definition) is supercuspidal.
		\end{definition}
		
		\begin{definition}\label{Def regular supercuspidal}\
			
			\begin{enumerate}
				\item An irreducible representation is called \textbf{regular supercuspidal} if it lies in a regular supercuspidal block.
				\item A representation is called \textbf{regular supercuspidal} if all its irreducible subquotients are regular supercuspidal.
			\end{enumerate}
		\end{definition}
		
        It is clear from the definitions that we have the following proposition.
		
		\begin{proposition}\label{Theorem Pure SC}
			Let $\mathcal{A}_s$ be a regular supercuspidal block. Then any representation in this block is supercuspidal.
		\end{proposition}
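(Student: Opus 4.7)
The plan is to unwind the definitions and reduce the statement to the observation that the block $\mathcal{A}_s$ has a \emph{unique} simple object, which by assumption is supercuspidal.

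First I would recall the structure of the block: by Broué's equivalence (Theorem \ref{Thm Broué}), there is an equivalence of abelian categories
\begin{equation*}
\mathcal{A}_s = \overline{\mathbb{Z}}_{\ell}Ge_s^G\text{-Mod} \;\simeq\; \overline{\mathbb{Z}}_{\ell}Te_s^T\text{-Mod}.
\end{equation*}
By Remark \ref{Remark: T_{ell}}, the right-hand side is equivalent to $\overline{\mathbb{Z}}_{\ell}T_{\ell}\text{-Mod}$, which (as the category of modules over the group algebra of a finite abelian $\ell$-group with coefficients in the strictly henselian local ring $\overline{\mathbb{Z}}_{\ell}$) is a local category with a unique simple object. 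Transporting this along Broué's equivalence, $\mathcal{A}_s$ has a unique simple object (up to isomorphism), namely $\pm R_T^G(\theta_s)$.

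Next, I would use this to conclude. Let $\pi' \in \mathcal{A}_s$ be any (smooth) representation, and let $\pi''$ be any irreducible subquotient of $\pi'$. Since subquotients inside the block stay inside the block, $\pi''$ is an irreducible object of $\mathcal{A}_s$, hence isomorphic to the unique simple object $\pm R_T^G(\theta_s)$. By the hypothesis that $\mathcal{A}_s$ is a regular \textbf{supercuspidal} block (Definition \ref{Definition regular supercuspidal block}), this simple object is supercuspidal. Therefore every irreducible subquotient of $\pi'$ is supercuspidal, which by Definition \ref{Def supercuspidal} means $\pi'$ itself is supercuspidal.

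There is essentially no obstacle here beyond chasing the definitions; the only thing to be careful about is the (routine) verification that $\overline{\mathbb{Z}}_{\ell}T_{\ell}\text{-Mod}$ has a unique simple object, which comes from the fact that $T_{\ell}$ is a finite abelian $\ell$-group and the residue field $\overline{\mathbb{F}}_{\ell}$ of $\overline{\mathbb{Z}}_{\ell}$ has characteristic $\ell$, so every irreducible $\overline{\mathbb{F}}_{\ell}$-representation of $T_{\ell}$ is trivial, and there is exactly one block. Once this is in hand, the proposition follows immediately, so the proof can be presented in just a few lines.
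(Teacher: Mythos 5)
Your proof is correct and follows essentially the same route as the paper: both reduce to the observation (Remark \ref{Remark: T_{ell}}, via Broué's equivalence) that $\mathcal{A}_s$ has a unique simple object $\pm R_T^G(\theta_s)$, which is supercuspidal by Definition \ref{Definition regular supercuspidal block}. You merely spell out the uniqueness of the simple object more explicitly, where the paper simply cites the remark.
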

		
		\begin{proof}
			By definition of supercuspidality, it suffices to check that any irreducible representation in this block is supercuspidal. But as we noted before in Remark \ref{Remark: T_{ell}}, $\mathcal{A}_s$ has only one irreducible representation -- $\pm R_T^G(\theta_s)$ (see Remark \ref{Remark: T_{ell}}), which we assumed to be supercuspidal in the definition of regular supercuspidal block. So we win!
		\end{proof}

    \subsection{Proof of Theorem \ref{Thm SC Red} on supercuspidal reduction}
		
		We now apply the previous results on finite groups of Lie type to representations of parahoric subgroups of a $p$-adic group. For this, we show that the inflation induces an equivalence of categories between (certain summand of) the category of representations of a finite reductive group and the corresponding parahoric subgroup (see Subsection \ref{Subsection_inflation}). 
		
		\textbf{Let us get back to the notations at the beginning of this chapter.}
		
		Let $G$ be a connected split reductive group scheme over $\mathbb{Z}$, which is simply connected. Let $F$ be a non-archimedean local field, with ring of integers $\mathcal{O}_F$ and residue field $k_F \cong \mathbb{F}_q$ of residue characteristic $p$. Let $x$ be a vertex of the Bruhat-Tits building $\mathcal{B}(G, F)$, $G_x$ the parahoric subgroup associated to $x$, $G_x^+$ its pro-unipotent radical. Recall that $\overline{G_x}:=G_x/G_x^+$ is a generalized Levi subgroup of $G(k_F)$ with root system $\Phi_x$, see \cite[Theorem 3.17]{rabinoff2003bruhat}.
		
		Let $\Lambda=\overline{\mathbb{Z}}_{\ell}$, with $\ell \neq p$. Let $\rho \in \Rep_{\Lambda}(G_x)$ be an irreducible representation of $G_x$, which is trivial on $G_x^+$ and whose reduction to the finite group of Lie type $\overline{G_x}=G_x/G_x^+$ is regular supercuspidal. 
		
		
		In other words, we start with an irreducible representation $\rho \in \Rep_{\Lambda}(G_x)$ that has regular supercuspidal reduction. Let $\mathcal{B}_{x,1}$ be the ($\overline{\mathbb{Z}}_{\ell}$-)block of $\Rep_{\Lambda}(G_x)$ containing $\rho$. We can now prove Theorem \ref{Thm SC Red}, which we restate as follows.
		
		\begin{theorem} \label{Thm SC Red restate}
			Let $\rho \in \Rep_{\Lambda}(G_x)$ be an irreducible representation of $G_x$, which has regular supercuspidal reduction. Let $\mathcal{B}_{x,1}$ be the $\overline{\mathbb{Z}}_{\ell}$-block of $\Rep_{\Lambda}(G_x)$ containing $\rho$. Then any $\rho' \in \mathcal{B}_{x,1}$ has supercuspidal reduction.
		\end{theorem}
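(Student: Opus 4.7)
The strategy is to reduce the theorem to the corresponding statement about the finite group of Lie type $\overline{G_x}=G_x/G_x^{+}$, via an inflation equivalence between the block $\mathcal{A}_{x,1}\subseteq\Rep_{\Lambda}(\overline{G_x})$ containing $\overline{\rho}$ and the block $\mathcal{B}_{x,1}$. Granting this equivalence for a moment, any $\rho'\in\mathcal{B}_{x,1}$ is the inflation of some $\overline{\rho'}\in\mathcal{A}_{x,1}$, so $\rho'$ is trivial on $G_x^{+}$. The hypothesis that $\overline{\rho}$ is regular supercuspidal means $\mathcal{A}_{x,1}$ is a regular supercuspidal block in the sense of Definition \ref{Definition regular supercuspidal block}, so Proposition \ref{Theorem Pure SC} implies that $\overline{\rho'}$ is supercuspidal. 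By definition, this says exactly that $\rho'$ has supercuspidal reduction.

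The main step is therefore to produce the inflation equivalence, for which it suffices to show that the full subcategory $\Rep_{\Lambda}(G_x)_{0}\subseteq\Rep_{\Lambda}(G_x)$ of smooth representations trivial on $G_x^{+}$ is a direct factor of $\Rep_{\Lambda}(G_x)$. The key input is that $G_x^{+}$ is pro-$p$ and $\ell\neq p$: for any smooth $\Lambda$-representation $V$ of $G_x$ and any $v\in V$, the vector $v$ is fixed by some open normal subgroup $U\subseteq G_x^{+}$, so the subrepresentation it generates factors through the finite $p$-group $G_x^{+}/U$, whose order is invertible in $\Lambda$. The averaging idempotent then gives a canonical decomposition $V=V^{G_x^{+}}\oplus V_{>0}$, where $V_{>0}$ has no nonzero $G_x^{+}$-fixed vectors. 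Since $G_x^{+}$ is normal in $G_x$, the decomposition is $G_x$-equivariant, so we obtain
$$\Rep_{\Lambda}(G_x)=\Rep_{\Lambda}(G_x)_{0}\oplus\Rep_{\Lambda}(G_x)_{>0},$$
and inflation identifies $\Rep_{\Lambda}(G_x)_{0}$ with $\Rep_{\Lambda}(\overline{G_x})$. Consequently $\mathcal{B}_{x,1}$ sits inside $\Rep_{\Lambda}(G_x)_{0}$ and corresponds to $\mathcal{A}_{x,1}$ under inflation, completing the reduction.

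The one point requiring care---and the place where the coprimality of $\ell$ and $p$ is essential---is the passage from the finite Maschke-type splitting on each subrepresentation generated by a single vector to a coherent $G_x$-equivariant splitting of the whole of $V$. This is routine once one observes that the averaging idempotents at different levels $U'\subseteq U$ are compatible under restriction, so the trivial isotypic components in the finite quotients assemble into a global $G_x$-stable subrepresentation $V^{G_x^{+}}$ with a canonical complement. With this verified, the three-line argument of the first paragraph yields the theorem.
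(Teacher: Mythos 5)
Your proposal is correct and follows the same overall reduction as the paper: pass to the finite group $\overline{G_x}$ via inflation, identify $\mathcal{B}_{x,1}$ with the regular supercuspidal block $\mathcal{A}_{x,1}=\mathcal{A}_s$ of $\Rep_{\Lambda}(\overline{G_x})$, and invoke Proposition \ref{Theorem Pure SC}. The one place where you diverge meaningfully is in proving the key Lemma \ref{Lem Summand}, namely that $\Rep_{\Lambda}(G_x)_0$ is a direct factor of $\Rep_{\Lambda}(G_x)$. The paper argues inside the (non-unital) Hecke algebra $\mathcal{H}_{\Lambda}(G_x)$: it takes the normalized characteristic functions $e_n$ of a countable basis $K_n$ of open compact pro-$p$ normal subgroups with $K_0=G_x^{+}$, shows they are central, makes them orthogonal via $\tilde{e}_0=e_0$ and $\tilde{e}_n=e_n-e_{n-1}$, and thereby decomposes $\mathcal{H}_{\Lambda}(G_x)\Modl$ into an infinite direct sum of which $e\mathcal{H}_{\Lambda}(G_x)e\Modl\cong\Rep_{\Lambda}(G_x)_0$ is the first summand. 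You instead work at the level of representations: the smooth averaging projector onto $G_x^{+}$-invariants is well-defined because every vector is fixed by an open normal $U\subseteq G_x^{+}$ of index a power of $p$, hence invertible in $\Lambda$, the projectors at different levels are compatible, and normality of $G_x^{+}$ makes the projector $G_x$-equivariant; this gives the two-factor decomposition $\Rep_{\Lambda}(G_x)=\Rep_{\Lambda}(G_x)_0\oplus\Rep_{\Lambda}(G_x)_{>0}$ directly. Your route is a bit more elementary and avoids the Hecke-algebra bookkeeping (and the subtlety the paper flags about $\mathcal{H}_{\Lambda}(G_x)$ being non-unital); the paper's route buys a finer decomposition into infinitely many summands indexed by the filtration levels, which is not needed for this theorem. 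Both are valid, and yours suffices.
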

		
		\begin{proof}
			Let $\overline{\rho} \in \Rep_{\Lambda}(\overline{G_x})$ be the reduction of $\rho$ modulo $G_x^+$. $\overline{\rho}$ is irreducible (since $\rho$ is) and regular supercuspidal by assumption, so it is of the form $\pm R_T^G(\theta_s)$ (see Remark \ref{Remark: T_{ell}}), for some strongly regular semisimple $\ell'$-element $s$ of the finite dual group $\overline{G_x}^*$ (see Definition \ref{Def regular supercuspidal}).  
			
			Let $\Rep_{\Lambda}(G_x)_0$ be the full subcategory of $\Rep_{\Lambda}(G_x)$ consisting of representations of $G_x$ that are trivial on $G_x^+$. The key observation is that $\Rep_{\Lambda}(G_x)_0$ is a summand (as abelian category) of $\Rep_{\Lambda}(G_x)$ (see Lemma \ref{Lem Summand}).
			
			Then since $\rho \in \Rep_{\Lambda}(G_x)_0$, its block $\mathcal{B}_{x,1}$ is a summand of $\Rep_{\Lambda}(G_x)_0$.
			
			On the other hand, notice that the inflation functor induces an equivalence of categories between $\Rep_{\Lambda}(\overline{G_x})$ and $\Rep_{\Lambda}(G_x)_0$, with inverse the functor of reduction modulo $G_x^+$.
			So the blocks of $\Rep_{\Lambda}(\overline{G_x})$ and $\Rep_{\Lambda}(G_x)_0$ are in one-to-one correspondence. Let $\mathcal{A}_{x,1}$ be the corresponding block of $\Rep_{\Lambda}(\overline{G_x})$ to $\mathcal{B}_{x,1}$. Then $\mathcal{A}_{x,1}$ is the regular supercuspidal block $\mathcal{A}_s$ corresponding to $s$ (recall that $\overline{\rho}=\pm R_T^G(\theta_s)$). By Theorem \ref{Theorem Pure SC}, $\mathcal{A}_s$ consists purely of supercuspidal representation. Therefore, $\mathcal{B}_{x,1}$ consists purely of representations that have supercuspidal reductions. 
		\end{proof}

     \subsection{Inflation induces an equivalence}  \label{Subsection_inflation}

		\begin{lemma}\label{Lem Summand}
			Let $\Rep_{\Lambda}(G_x)_0$ be the full subcategory of $\Rep_{\Lambda}(G_x)$ consisting of representations of $G_x$ that are trivial on $G_x^+$. Then $\Rep_{\Lambda}(G_x)_0$ is a summand as abelian category of $\Rep_{\Lambda}(G_x)$.
		\end{lemma}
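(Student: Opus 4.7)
The strategy is to exhibit a functorial idempotent on $\Rep_{\Lambda}(G_x)$ projecting onto $\Rep_{\Lambda}(G_x)_0$; the complementary subcategory $\mathcal{C}$ is then defined as the kernel of this projection. The crucial input is that $G_x^+$ is a pro-$p$ group while $\ell \neq p$, so that $p$, and hence the order of every finite continuous quotient of $G_x^+$, is a unit in $\Lambda = \overline{\mathbb{Z}}_{\ell}$. This is what allows Maschke-style averaging.

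For any $V \in \Rep_{\Lambda}(G_x)$, smoothness gives $V = \bigcup_U V^U$ where $U$ runs over the open normal subgroups of $G_x^+$; each $V^U$ is a module over the finite $p$-group $G_x^+/U$. Define
\[
e_V: V \longrightarrow V, \qquad e_V(v) = \frac{1}{|G_x^+/U|}\sum_{g \in G_x^+/U} g \cdot v \quad\text{for } v \in V^U.
\]
One checks this is independent of the choice of $U$ stabilizing $v$, is $\Lambda$-linear, lands in $V^{G_x^+}$, and satisfies $e_V^2 = e_V$ and $e_V|_{V^{G_x^+}} = \mathrm{id}$. Naturality in $V$ is immediate. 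Since $G_x^+$ is normal in $G_x$, the subspace $V^{G_x^+}$ is $G_x$-stable, and $g \cdot e_V(v) = e_V(g \cdot v)$ follows from $g G_x^+ g^{-1} = G_x^+$; thus $e_V$ is a natural $G_x$-equivariant projection and we obtain a functorial decomposition $V = V^{G_x^+} \oplus \ker(e_V)$ in $\Rep_{\Lambda}(G_x)$.

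Now let $\mathcal{C}$ be the full subcategory of $W \in \Rep_{\Lambda}(G_x)$ with $W^{G_x^+} = 0$, equivalently $e_W = 0$. This is abelian (closed under kernels, cokernels, and direct sums because $(-)^{G_x^+}$ is left exact and commutes with filtered colimits by smoothness). The summand $\ker(e_V)$ lies in $\mathcal{C}$: any $w \in \ker(e_V) \cap V^{G_x^+}$ satisfies both $w = e_V(w)$ and $e_V(w) = 0$. Finally, there are no nonzero morphisms between the two subcategories in either direction: given $V_0 \in \Rep_{\Lambda}(G_x)_0$ and $W \in \mathcal{C}$, any $f: V_0 \to W$ has image contained in $W^{G_x^+} = 0$ (the image is $G_x^+$-fixed because $V_0$ is), and any $h: W \to V_0$ satisfies $h = h \circ e_W = 0$ by naturality and $e_{V_0} = \mathrm{id}$. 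Together with the functorial splitting, this yields $\Rep_{\Lambda}(G_x) = \Rep_{\Lambda}(G_x)_0 \oplus \mathcal{C}$ as abelian categories.

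No genuine obstacle arises: the only technical point is to set up the averaging compatibly with the filtered colimit $V = \bigcup_U V^U$ and to verify $G_x$-equivariance using normality of $G_x^+$, both of which are routine.
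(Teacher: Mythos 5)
Your proof is correct, but it takes a noticeably different route from the paper. The paper works on the Hecke-algebra side: it identifies $\Rep_\Lambda(G_x)$ with $\mathcal{H}_\Lambda(G_x)\Modl$, views the normalized characteristic function $e=1_{G_x^+}$ of $G_x^+$ as an idempotent in $\mathcal{H}_\Lambda(G_x)$, and then — because $\mathcal{H}_\Lambda(G_x)$ is not unital — goes to some length to build the complementary piece explicitly: it takes the whole Moy–Prasad filtration $K_n=G_{x,r_n}$, produces the chain of central idempotents $e_n=1_{K_n}$, orthogonalizes them to $\tilde e_n = e_n - e_{n-1}$, and deduces $\mathcal{H}_\Lambda(G_x)\Modl\cong\bigoplus_n\tilde e_n\mathcal{H}_\Lambda(G_x)\tilde e_n\Modl$, with $\Rep_\Lambda(G_x)_0$ appearing as the $n=0$ summand. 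Your argument instead works directly on objects: you define the Maschke averaging projector $e_V:V\to V^{G_x^+}$ on each representation, show it is a natural, $G_x$-equivariant idempotent (using normality of $G_x^+$), and conclude immediately via the functorial splitting $V=V^{G_x^+}\oplus\ker e_V$ together with the vanishing of $\Hom$ in both directions. The underlying idempotent is the same — your $e_V$ is exactly the action of the Hecke-algebra idempotent $e$ on $V$ — but your formulation entirely bypasses the non-unitality issue that the paper explicitly flags as the main technical nuisance, at the cost of not giving the full block-style decomposition of $\Rep_\Lambda(G_x)$ that the Hecke-algebra picture produces along the way. One small point to tighten: when you assert $\mathcal{C}$ is closed under cokernels, citing left-exactness of $(-)^{G_x^+}$ is not quite enough; the clean argument is that for a surjection $q:W_2\twoheadrightarrow C$ with $W_2\in\mathcal{C}$, naturality gives $e_C\circ q=q\circ e_{W_2}=0$, hence $e_C=0$ (equivalently, $(-)^{G_x^+}$ is in fact \emph{exact} here because $e$ provides a natural splitting).
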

		
		\begin{remark}
			A similar proof as \cite[Appendix]{dat2009finitude} should work. Nevertheless, I include here an alternative proof.\footnote{Note that one of the main difficulty is that $\mathcal{H}_{\Lambda}(G_x)$ might be non-unital.}
		\end{remark}
		
		\begin{proof}
			Note that $G_x^+$ is pro-$p$ (see \cite[II.5.2.(b)]{vigneras1996representations}), in particular, it has pro-order invertible in $\Lambda$. So we have a normalized Haar measure $\mu$ on $G_x$ such that $\mu(G_x^+)=1$ (see \cite[I.2.4]{vigneras1996representations}). The characteristic function $e:=1_{G_x^+}$ is an idempotent of the Hecke algebra $\mathcal{H}_{\Lambda}(G_x)$ under convolution with respect to the Haar measure $\mu$. We shall show that $e=1_{G_x^+}$ cuts out $\Rep_{\Lambda}(G_x)_0$ as a summand of $\Rep_{\Lambda}(G_x) \cong \mathcal{H}_{\Lambda}(G_x)\Modl$.
			
			Recall that we have a descending filtration $\{G_{x,r} \;|\; r\in \mathbb{R}_{>0}\}$ of $G_x$ such that 
			\begin{enumerate}
				\item $\forall r \in \mathbb{R}_{>0}, G_{x,r}$ is an open compact pro-$p$ normal subgroup of $G_x$.
				\item $G_{x,r}$ form a neighborhood basis of $1$ inside $G_x$. 
				\item The set $\{G_{x,r} \;|\; r\in \mathbb{R}_{>0}\}$ is countable.
			\end{enumerate}
			(See \cite[II.5.1]{vigneras1996representations}.)
			
			Let $\{r_n \;|\; n \in \mathbb{Z}_{\geq 0}\}$ be the jumps of the filtration $\{G_{x,r} \;|\; r\in \mathbb{R}_{>0}\}$. We get a descending filtration $\{K_n:=G_{x, r_n} \;|\; n \in \mathbb{Z}_{\geq 0}\}$ of $G_x$, with $K_0=G_x^+$. Note that the relative Hecke algebra $\mathcal{H}_{\Lambda}(G_x, K_n)$ (consisting of bi-$K_n$-invariant functions) is unital with unit $e_n$, the normalized characteristic function of $K_n$ (in particular, $e_o=e$). A direct computation shows that $e_i*e_j=e_i$ for all $i<j$. Moreover, one can deduce that the $e_n$'s are central, i.e.,
			$$e_n*f=f*e_n,$$
			for all $f \in \mathcal{H}_{\Lambda}(G_x)$, by writing $f$ as a linear combination of characteristic functions that are $G_x$-translations of $e_n$'s. In addition, $\{e_n \;|\; n \in \mathbb{Z}_{\geq 0}\}$ generates $\mathcal{H}_{\Lambda}(G_x)$ as a $\mathcal{H}_{\Lambda}(G_x)$-module.
			
			To summarize, we have a set of central idempotents $\{e_n \;|\; n \in \mathbb{Z}_{\geq 0}\}$ that generates $\mathcal{H}_{\Lambda}(G_x)$. Now we modify them such that they become orthogonal. Indeed, we put $\tilde{e_0}:=e_0=e$ and $\tilde{e_n}:=e_n-e_{n-1}$. Then we can check that for $i < j$,
			\begin{equation*}
				\begin{aligned}
					&\tilde{e_i}*\tilde{e_j}=(e_i-e_{i-1})*(e_j-e_{j-1})\\
					=\;&e_i*e_j-e_{i-1}*e_j-e_i*e_{j-1}+e_{i-1}*e_{j-1}\\
					=\;& e_i-e_{i-1}-e_i+e_{i-1}=0.
				\end{aligned}
			\end{equation*}
			
			Therefore, 
			$$\mathcal{H}_{\Lambda}(G_x)\Modl \cong \bigoplus_{n \in \mathbb{Z}_{\geq 0}}\tilde{e_n}\mathcal{H}_{\Lambda}(G_x)\tilde{e_n}\Modl.$$ In particular, $\Rep_{\Lambda}(G_x)_0 \cong e\mathcal{H}_{\Lambda}(G_x)e\Modl$ is a summand of $\Rep_{\Lambda}(G_x) \cong \mathcal{H}_{\Lambda}(G_x)\Modl$.

		\end{proof}
		
		\begin{lemma}\label{Lemma A to B}
			The inflation induces an equivalence of categories between $\Rep_{\Lambda}(\overline{G_x})$ and $\Rep_{\Lambda}(G_x)_0$. In particular, let $\rho$ be as in Theorem \ref{Thm SC Red restate} and let $\mathcal{A}_{x,1}$ be the block of $\Rep_{\Lambda}(\overline{G_x})$ containing $\overline{\rho}$, then the inflation induces an equivalence of categories 
			$$\mathcal{A}_{x,1} \cong \mathcal{B}_{x,1}.$$
		\end{lemma}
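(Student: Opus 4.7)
The plan is to exhibit the inflation functor and its inverse directly, and then transport the block structure along the equivalence. Let $\pi: G_x \twoheadrightarrow \overline{G_x} = G_x/G_x^+$ be the projection. Define $\mathrm{Inf}: \Rep_{\Lambda}(\overline{G_x}) \to \Rep_{\Lambda}(G_x)_0$ by $(\sigma, V) \mapsto (\sigma \circ \pi, V)$. The image lands in $\Rep_{\Lambda}(G_x)_0$ since $\sigma \circ \pi$ is trivial on $G_x^+$ by construction, and the representation is automatically smooth because $G_x^+ \subseteq G_x$ is open and acts trivially, so every vector has open stabilizer. In the other direction, define $\mathrm{Red}: \Rep_{\Lambda}(G_x)_0 \to \Rep_{\Lambda}(\overline{G_x})$ by sending a representation $(\rho, V)$ trivial on $G_x^+$ to the same underlying module, equipped with the induced action of $\overline{G_x} = G_x/G_x^+$ (well-defined precisely because $\rho|_{G_x^+}$ is trivial).

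Next I would verify that $\mathrm{Red} \circ \mathrm{Inf}$ and $\mathrm{Inf} \circ \mathrm{Red}$ are each the identity functor on the nose (not just naturally isomorphic): both compositions leave the underlying $\Lambda$-module unchanged, and the action before and after is literally the same function. Both functors are clearly additive and exact, so this is an equivalence of abelian categories. This establishes the first claim.

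For the second claim, the equivalence $\mathrm{Inf}$ sends indecomposable summands to indecomposable summands, hence identifies the block decomposition of $\Rep_{\Lambda}(\overline{G_x})$ with the block decomposition of $\Rep_{\Lambda}(G_x)_0$. By Lemma \ref{Lem Summand}, $\Rep_{\Lambda}(G_x)_0$ is a summand (as an abelian category) of $\Rep_{\Lambda}(G_x)$, so any indecomposable summand of $\Rep_{\Lambda}(G_x)_0$ is also an indecomposable summand of $\Rep_{\Lambda}(G_x)$; in other words, blocks of $\Rep_{\Lambda}(G_x)_0$ are precisely those blocks of $\Rep_{\Lambda}(G_x)$ that happen to lie entirely in $\Rep_{\Lambda}(G_x)_0$. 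Since $\rho \in \Rep_{\Lambda}(G_x)_0$ by assumption, the block $\mathcal{B}_{x,1}$ of $\Rep_{\Lambda}(G_x)$ containing $\rho$ lies entirely in $\Rep_{\Lambda}(G_x)_0$ and hence corresponds under $\mathrm{Inf}$ to the block of $\Rep_{\Lambda}(\overline{G_x})$ containing $\mathrm{Red}(\rho) = \overline{\rho}$, which is by definition $\mathcal{A}_{x,1}$.

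Everything here is routine; the only mildly delicate point is the block-matching at the end, and the key input making it work is Lemma \ref{Lem Summand}, which ensures that passing to the summand $\Rep_{\Lambda}(G_x)_0$ does not merge or split blocks of $\Rep_{\Lambda}(G_x)$. No serious obstacle is expected.
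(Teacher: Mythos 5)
Your proof is correct and takes essentially the same approach as the paper, which simply observes that reduction modulo $G_x^+$ is the inverse functor and leaves the verification to the reader. Your more detailed account of the block-matching via Lemma \ref{Lem Summand} is a welcome expansion of what the paper folds into ``one can check by hand,'' but it is the same underlying argument.
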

		
		\begin{proof}
			The inverse functor is given by the reduction modulo $G_x^+$. One can check by hand that they are equivalences of categories.
		\end{proof}

		\section{$\Hom$ between compact inductions}\label{Sec Pf Thm Hom}
		
		Let us now prove Theorem \ref{Thm Hom} which computes the $\Hom$ between compact inductions of $\rho_1$ and $\rho_2$, assuming that one of them has supercuspidal reduction.
		
		\begin{proof}[Proof of Theorem \ref{Thm Hom}]
			\begin{equation*}
				\begin{aligned}
					&\Hom_G(\cInd_{G_x}^{G(F)}\rho_1, \cInd_{G_y}^{G(F)}\rho_2)\\
					=\;&\Hom_{G_x}\left(\rho_1,(\cInd_{G_y}^{G(F)}\rho_2)|_{G_x}\right)\\
					=\;& \Hom_{G_x}\left(\rho_1, \bigoplus_{g \in {G_y\backslash G(F)/G_x}}\cInd_{G_x \cap g^{-1}G_yg}^{G_x}\rho_2(g-g^{-1})\right)
				\end{aligned}
			\end{equation*}
			
			Recall that $g^{-1}G_yg=G_{g^{-1}.y}$. So it suffices to show that for $g \in G(F)$ with $G_x \cap g^{-1}G_yg \neq G_x$, or equivalently, for $g \in G(F)$ with $g.x \neq y$ (since $x$ and $y$ are vertices), it holds that
			$$\Hom_{G_x}\left(\rho_1, \cInd_{G_x \cap g^{-1}G_yg}^{G_x}\rho_2(g-g^{-1})\right)=0.$$
			
			Note $G_x/(G_x \cap g^{-1}G_yg)$ is compact, hence $\cInd_{G_x \cap g^{-1}G_yg}^{G_x}=\operatorname{Ind}_{G_x \cap g^{-1}G_yg}^{G_x}$, and we have Frobenius reciprocity in the other direction
			$$\Hom_{G_x}\left(\rho_1, \cInd_{G_x \cap g^{-1}G_yg}^{G_x}\rho_2(g-g^{-1})\right) \cong \Hom_{G_x \cap g^{-1}G_yg}\left(\rho_1, \rho_2(g-g^{-1})\right).$$
			
			So it suffices to show that for $g \in G(F)$ with $g.x \neq y$,
			$$\Hom_{G_x \cap g^{-1}G_yg}\left(\rho_1, \rho_2(g-g^{-1})\right)=0.$$
			Note now this expression is symmetric with respect to $\rho_1$ and $\rho_2$, and so is the following argument.
			
			First, if $\rho_2$ has supercuspidal reduction (denoted $\overline{\rho_2}$),
			\begin{align*}    	
				& \Hom_{G_x \cap g^{-1}G_yg}\left(\rho_1, \rho_2(g-g^{-1})\right) \\
				=\;& \Hom_{G_x \cap G_{g^{-1}.y}}\left(\rho_1, \rho_2(g-g^{-1})\right) \\
				\subseteq\;& \Hom_{G_x^+ \cap G_{g^{-1}.y}}\left(\rho_1, \rho_2(g-g^{-1})\right) && 
				\\
				=\;& \Hom_{G_x^+ \cap G_{g^{-1}.y}}(1^{\oplus d_1}, \rho_2(g-g^{-1})) && \text{$\rho_1$ is trivial on $G_x^+$ }\\
				=\;& \Hom_{G_{g.x}^+ \cap G_y}(1^{\oplus d_1}, \rho_2) && \text{Conjugate by $g^{-1}$}\\
				=\;& \Hom_{U_y(g.x)}(1^{\oplus d_1}, \overline{\rho_2}) && \text{Reduction modulo $G_y^+$. See below.}\\
				=\;& 0 && \text{$\overline{\rho_2}$ is supercuspidal. See below.}
			\end{align*}
			
			The last two equalities need some explanation. 
			
			The former one uses the following consequence from Bruhat-Tits theory: If $x_1$ and $x_2$ are two different vertices of the Bruhat-Tits building, then $\overline{G_{x_i}}:=G_{x_i}/G_{x_i}^+$ is a generalized Levi subgroup of $\overline{G}=G(\mathbb{F}_q)$, for $i=1, 2$. Moreover, $G_{x_1} \cap G_{x_2}$ projects onto a proper parabolic subgroup $P_{x_1}(x_2)$ of $\overline{G_{x_1}}$ under the reduction map $G_{x_1} \to \overline{G_{x_1}}$. In addition, $G_{x_1} \cap G_{x_2}^+$ projects onto $U_{x_1}(x_2)$, the unipotent radical of $P_{x_1}(x_2)$, under the reduction map $G_{x_1} \to \overline{G_{x_1}}$. For details, see Lemma \ref{Lem Passage to Residue Field} below. Note that the assumption of Lemma \ref{Lem Passage to Residue Field} is satisfied since without loss of generality we may assume that $x_1=x$ and $x_2=y$ lie in the closure of a common alcove (since $G$ acts simply transitively on the set of alcoves).
			
			The latter one uses that for a supercuspidal representation $\rho$ of a finite group of Lie type $\Gamma$, 
			$$\Hom_U(1, \rho|_U)=\Hom_U(\rho|_U, 1)=0,$$
			for the unipotent radical $U$ of $P$, where $P$ is any proper parabolic subgroup of $\Gamma$. For details, see Lemma \ref{Lem Hom_U(1_U, SC)} below.
			
			Symmetrically, a similar argument works if $\rho_1$ has supercuspidal reduction. Indeed, if $\rho_1$ has supercuspidal reduction (denoted $\overline{\rho_1}$),
			\begin{align*}    	
				& \Hom_{G_x \cap g^{-1}G_yg}\left(\rho_1, \rho_2(g-g^{-1})\right) \\
				=\;& \Hom_{gG_xg^{-1} \cap G_y}\left(\rho_1(g^{-1}-g), \rho_2\right) && \text{Conjugate by $g^{-1}$}\\ 
				\subseteq\;& \Hom_{gG_xg^{-1} \cap G_y^+}\left(\rho_1(g^{-1}-g), \rho_2\right) && 
				\\
				=\;& \Hom_{gG_xg^{-1} \cap G_y^+}(\rho_1(g^{-1}-g), 1^{\oplus d_2}) && \text{$\rho_2$ is trivial on $G_y^+$ }\\
				=\;& \Hom_{G_x \cap g^{-1}G_y^+g}(\rho_1, 1^{\oplus d_2}) && \text{Conjugate by $g$}\\
				=\;& \Hom_{G_x \cap G_{g^{-1}.y}^+}(\rho_1, 1^{\oplus d_2}) && \\
				=\;& \Hom_{U_x(g^{-1}.y)}(\overline{\rho_1}, 1^{\oplus d_2}) && \text{Reduction modulo $G_x^+$}\\
				=\;& 0 && \text{$\overline{\rho_1}$ is supercuspidal. }
			\end{align*}
			
		\end{proof}

		\begin{lemma}\label{Lem Passage to Residue Field}
			Let $x_1$ and $x_2$ be two points of the Bruhat-Tits building $\mathcal{B}(G, F)$. Assume that they lie in the closure of the same alcove.
			\begin{enumerate}
				\item[(i)]   The image of $G_{x_1} \cap G_{x_2}$ in $\overline{G_{x_1}}$ is a parabolic subgroup of $\overline{G_{x_1}}$. Let us denote it by $P_{x_1}(x_2)$. Moreover, the image of $G_{x_1} \cap G_{x_2}^+$ in $\overline{G_{x_1}}$ is the unipotent radical of $P_{x_1}(x_2)$. Let us denote it by $U_{x_1}(x_2)$.
				\item[(ii)] 	Assume moreover that $x_1$ and $x_2$ are two different vertices of the building. Then $P_{x_1}(x_2)$ is a proper parabolic subgroup of $\overline{G_{x_1}}$.
			\end{enumerate}
		\end{lemma}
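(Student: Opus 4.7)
The plan is to work inside a common apartment and use the root-subgroup presentation of parahoric subgroups, following Bruhat--Tits. Since $x_1$ and $x_2$ both lie in the closure of a common alcove $C$, I would fix an apartment $\mathcal{A} \supseteq C$ with associated split maximal $F$-torus $T$. For each $\alpha \in \Phi(G, T)$ write $U_\alpha$ for the root subgroup and $U_{\alpha, r}$ for its Moy--Prasad filtration at level $r \in \mathbb{R}$. Then $G_y$ is generated by $T(\mathcal{O}_F)$ together with the $U_{\alpha, -\alpha(y)}$, while $G_y^+$ is generated by the pro-unipotent radical of $T(\mathcal{O}_F)$ and the $U_{\alpha, (-\alpha(y))^+}$; the reductive quotient $\overline{G_{x_1}}$ has root system $\Phi_{x_1} = \{\alpha : \alpha(x_1) \in \mathbb{Z}\}$ with root subgroups $\overline{U_\alpha} := U_{\alpha, -\alpha(x_1)}/U_{\alpha, (-\alpha(x_1))^+}$ for $\alpha \in \Phi_{x_1}$.

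The key observation for part (i) is that for $\alpha \in \Phi_{x_1}$, the subgroup $U_{\alpha, -\alpha(x_1)}$ lies in $G_{x_2}$ iff $\alpha(x_2 - x_1) \geq 0$, and lies in $G_{x_2}^+$ iff $\alpha(x_2 - x_1) > 0$. Accordingly, $\Psi := \{\alpha \in \Phi_{x_1} : \alpha(x_2 - x_1) \geq 0\}$ is a parabolic subset of $\Phi_{x_1}$ with nilpotent part $\Psi^+ := \{\alpha : \alpha(x_2 - x_1) > 0\}$, and one direction of the claim is immediate: the subgroup $P_{x_1}(x_2) \subseteq \overline{G_{x_1}}$ generated by $\overline{T}$ and $\{\overline{U_\alpha}\}_{\alpha \in \Psi}$ is parabolic and is contained in the image of $G_{x_1} \cap G_{x_2}$, while its unipotent radical $U_{x_1}(x_2)$ generated by $\{\overline{U_\alpha}\}_{\alpha \in \Psi^+}$ is contained in the image of $G_{x_1} \cap G_{x_2}^+$.

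For the reverse containment I would invoke an Iwahori-type factorization: after choosing an ordering of $\Phi$, every element of $G_{x_1}$ decomposes uniquely as a product of factors in the various $U_{\alpha, -\alpha(x_1)}$ and an element of $T(\mathcal{O}_F)$. If this element also lies in $G_{x_2}$, then for each $\alpha \notin \Psi$ the $\alpha$-factor must sit inside $U_{\alpha, (-\alpha(x_1))^+}$ (checked by peeling off factors in order and applying the commutation relations of Bruhat--Tits), hence maps to the identity in $\overline{G_{x_1}}$; the analogous argument with all strict inequalities gives the statement about $G_{x_1} \cap G_{x_2}^+$ and $U_{x_1}(x_2)$.

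For part (ii), if $x_1 \neq x_2$ are distinct vertices in $\overline{C}$, then $x_2 - x_1 \neq 0$ in $\mathcal{A}$; since $G$ is split semisimple and simply connected and $x_1$ is a vertex, $\Phi_{x_1}$ spans the underlying real vector space, so some $\alpha \in \Phi_{x_1}$ has $\alpha(x_2 - x_1) \neq 0$. Exactly one of $\pm \alpha$ then lies outside $\Psi$, so $P_{x_1}(x_2)$ is proper. The main technical obstacle is making the Iwahori factorization rigorous: the product decomposition must behave well with respect to \emph{both} parahorics simultaneously, which is exactly what the hypothesis $x_1, x_2 \in \overline{C}$ secures, since it forces both parahorics to contain a common Iwahori compatible with the chosen ordering of $\Phi$.
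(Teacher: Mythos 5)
Your proposal is correct but takes a genuinely different route from the paper. The paper simply cites Vign\'eras \cite[II.5.1.(k)]{vigneras1996representations} for part~(i), and for part~(ii) argues by contradiction: if $P_{x_1}(x_2) = \overline{G_{x_1}}$ then $G_{x_1} = G_{x_2}$, which would force $x_1$ and $x_2$ to lie in the same facet, contradicting that they are distinct vertices. You instead re-derive~(i) from first principles inside a common apartment, via the Moy--Prasad filtration of root subgroups and an Iwahori-type factorization; this is essentially the content of the cited Vign\'eras result, so you are proving rather than quoting it. Your part~(ii) is also a different argument: rather than reducing to $G_{x_1} \neq G_{x_2}$, you observe that $\Phi_{x_1}$ spans (since $x_1$ is a vertex of a semisimple simply connected group), so $x_2 - x_1 \neq 0$ in the apartment forces some $\alpha \in \Phi_{x_1}$ with $\alpha(x_2 - x_1) \neq 0$, whence one of $\pm\alpha$ leaves $\Psi$ and $P_{x_1}(x_2)$ is proper. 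Your version is arguably more transparent: it extracts the exact geometric reason the parabolic is proper (the linear functional $\alpha \mapsto \alpha(x_2-x_1)$ is not identically zero on $\Phi_{x_1}$), and the same computation simultaneously gives the unipotent radical in~(i). The cost is that your reverse containment in~(i) depends on an Iwahori factorization that you acknowledge you have not made fully rigorous; the paper avoids this entirely by outsourcing~(i) to the literature. If you want to keep your self-contained argument, the missing step is standard (e.g.\ the refined Iwahori decomposition with respect to a chamber $C$ whose closure contains both $x_1$ and $x_2$, for which both $G_{x_1}$ and $G_{x_2}$ contain the Iwahori $G_C$), but you should either fill it in or cite the same Vign\'eras passage the paper does.
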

		
		\begin{proof}
			(i) is \cite[II.5.1.(k)]{vigneras1996representations}.
			
			Let us prove (ii). It suffices to show that $G_{x_1} \neq G_{x_2}$. Assume otherwise that $G_{x_1}=G_{x_2}$, then $x_1$ and $x_2$ lie in the same facet, which contradicts the assumption that $x_1$ and $x_2$ are two different vertices.
		\end{proof}
		
		\begin{lemma}\label{Lem Hom_U(1_U, SC)}
			Let $\overline{\rho}$ be a supercuspidal representation of a finite group of Lie type $\Gamma$. Let $P$ be a proper parabolic subgroup of $\Gamma$, with unipotent radical $U$. Then
			$$Hom_U(1_U, \overline{\rho})=Hom_U(\overline{\rho}, 1_U)=0.$$
		\end{lemma}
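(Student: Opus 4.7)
The plan is to reduce both $\Hom$-spaces to a single statement, namely $\overline{\rho}^U = 0$, and then derive a contradiction from supercuspidality via Frobenius reciprocity for parabolic induction. The crucial observation is that $|U|$ is a power of $p$ while $\ell \neq p$, so $|U|$ is a unit in $\Lambda = \overline{\mathbb{Z}}_{\ell}$; consequently the functor $(-)^U$ of $U$-invariants is exact on $\Rep_{\Lambda}(U)$, and the averaging map provides a natural isomorphism $V^U \xrightarrow{\sim} V_U$ between invariants and coinvariants. Ordinary Frobenius reciprocity for the finite group $U$ gives
\begin{equation*}
  \Hom_U(1_U, \overline{\rho}|_U) \;\cong\; \overline{\rho}^U
  \qquad\text{and}\qquad
  \Hom_U(\overline{\rho}|_U, 1_U) \;\cong\; (\overline{\rho}_U)^{\vee} \;\cong\; (\overline{\rho}^U)^{\vee},
\end{equation*}
so both spaces vanish if and only if $\overline{\rho}^U = 0$.

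Next I would argue by contradiction: suppose $\overline{\rho}^U \neq 0$. Since $\overline{\rho}$ has a finite composition series in $\Rep_{\Lambda}(\Gamma)$ and $(-)^U$ is exact, there must exist an irreducible subquotient $\pi$ of $\overline{\rho}$ with $\pi^U \neq 0$; by Definition \ref{Def supercuspidal}(2) this $\pi$ is itself supercuspidal. Because the Levi decomposition $P = LU$ makes $U$ normal in $P$, the subspace $\pi^U \subseteq \pi$ is $P$-stable and carries a trivial $U$-action, so it is canonically the inflation of an $L$-representation. Pick any irreducible $L$-subrepresentation $\tau \hookrightarrow \pi^U$.

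Finally I would invoke the adjunction between parabolic induction $R_L^G := \Ind_P^G \circ \Inf_L^P$ and the Jacquet-type functor $\pi \mapsto \pi^U$: since $\Inf_L^P \tau$ is $U$-invariant, any $P$-map from it into $\pi|_P$ factors through $\pi^U$, giving
\begin{equation*}
  \Hom_{\Gamma}\bigl(R_L^G \tau,\, \pi\bigr) \;\cong\; \Hom_P\bigl(\Inf_L^P \tau,\, \pi|_P\bigr) \;\cong\; \Hom_L(\tau, \pi^U) \;\neq\; 0.
\end{equation*}
Hence $\pi$ is a quotient, hence a subquotient, of the proper parabolic induction $R_L^G \tau$, contradicting the supercuspidality of $\pi$. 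Therefore $\overline{\rho}^U = 0$, and both $\Hom$-spaces vanish.

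I expect no serious obstacle: the only mild subtlety is justifying the exactness of $(-)^U$ and the equivalence of invariants and coinvariants in the $\overline{\mathbb{Z}}_{\ell}$-coefficient setting, which follows from $|U|$ being invertible in $\Lambda$. Everything else is formal from the definition of supercuspidality and the standard adjunction for parabolic induction of finite reductive groups.
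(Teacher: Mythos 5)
Your argument is correct and rests on the same mechanism as the paper's proof: Frobenius reciprocity for the chain $U \subset P \subset \Gamma$, together with the observation that the resulting representation is a proper parabolic induction, which no irreducible subquotient of $\overline{\rho}$ can meet. The paper's proof is more compressed: it writes $\Hom_U(\overline{\rho},1_U) = \Hom_{\Gamma}(\overline{\rho}, \Ind_U^{\Gamma}1_U)$, factors $\Ind_U^{\Gamma}1_U = \Ind_P^{\Gamma}(\Ind_U^P 1_U)$, and observes that $\Ind_U^P 1_U$ is the inflation of the regular representation of $L = P/U$ (since $U$ is normal in $P$), so the whole thing is $R_L^{\Gamma}(\Lambda[L])$ and vanishing is immediate. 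You instead pass to $U$-invariants via the averaging idempotent, reduce to an irreducible subquotient $\pi$ with $\pi^U\ne 0$, and unwind the adjunction $\Hom_{\Gamma}(R_L^{\Gamma}\tau,\pi)\cong\Hom_L(\tau,\pi^U)$ directly; this is the same content, just more unfolded. One small imprecision to fix: over $\Lambda=\overline{\mathbb{Z}}_{\ell}$ (a non-Noetherian valuation ring) a nonzero $\Lambda[\Gamma]$-module need not have a \emph{finite} composition series, so the sentence asserting that $\overline{\rho}$ has one is not justified. What you actually need is weaker and is true: a nonzero finitely generated $\Lambda[\Gamma]$-module (take the $\Lambda[\Gamma]$-submodule generated by a single nonzero $U$-fixed vector) has a simple quotient by Zorn's lemma, and that simple quotient is a subquotient of $\overline{\rho}$ and a quotient of $R_L^{\Gamma}(\Lambda[L])$, giving the contradiction. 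With that adjustment the proof is complete.
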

		
		\begin{proof}
			$\Hom_U(\overline{\rho}|_U, 1_U)=\Hom_{\Gamma}(\overline{\rho}, Ind_P^{\Gamma}(\sigma))=0$, where $\sigma=Ind_U^P(1_U)$. The last equality holds because $\overline{\rho}$ is assumed to be supercuspidal. A similar argument shows that $Hom_U(1_U, \overline{\rho})=0$.  
			
		\end{proof}

		\section{$\Pi_{x,1}$ is a projective generator}\label{Section projective generator}
		
		In this subsection, we prove Theorem \ref{Thm Proj}: $\Pi_{x,1}$ is a projective generator of $\mathcal{C}_{x,1}$. Before doing this, let us recall the setting. Fix a vertex $x$ of the building of $G$. Let $\rho \in \Rep_{\Lambda}(G_x)$ be a representation which is trivial on $G_x^+$ and whose reduction to $\overline{G_x}=G_x/G_x^+$ is regular supercuspidal, $\pi=\cInd_{G_x}^{G(F)}\rho$ as before. Let $\mathcal{B}_{x,1}$ be the block of $\Rep_{\Lambda}(G_x)$ containing $\rho$, and $\mathcal{C}_{x,1}$ the block of $\Rep_{\Lambda}(G(F))$ containing $\pi$. 
		
		Let $V$ be the set of equivalence classes of vertices of the Bruhat-Tits building $\mathcal{B}(G, F)$ up to $G(F)$-action. For $y \in V$, let $\sigma_y:=\cInd_{G_y^+}^{G_y}\Lambda$. Let $\Pi:=\bigoplus_{y \in V}\Pi_y$ where $\Pi_y:=\cInd_{G_y^+}^{G(F)}\Lambda$. Then $\Pi$ is a projective generator of the category of depth-zero representations $\Rep_{\Lambda}(G(F))_0$, see \cite[Appendix]{dat2009finitude}. Let $\sigma_{x,1}:=(\sigma_x)|_{\mathcal{B}_{x,1}} \in \mathcal{B}_{x,1} \xhookrightarrow{summand} \Rep_{\Lambda}(G_x)$ be the $\mathcal{B}_{x,1}$-summand of $\sigma_x$. Let $\Pi_{x,1}:=\cInd_{G_x}^{G(F)}\sigma_{x,1}$.
		
		Let us summarize the setting in the following diagram.
		
		\[\begin{tikzcd}
			{\Rep_{\Lambda}(G_x)} & {\Rep_{\Lambda}(G(F))} \\
			{\Rep_{\Lambda}(G_x)_0} & {\Rep_{\Lambda}(G(F))_0} \\
			{\mathcal{B}_{x,1}} & {\mathcal{C}_{x,1}} \\
			\arrow[from=2-1, to=2-2]
			\arrow["{\cInd_{G_x}^{G(F)}}", from=1-1, to=1-2]
			\arrow["\subseteq"{description}, sloped, draw=none, from=2-1, to=1-1]
			\arrow["\subseteq"{description}, sloped, draw=none, from=3-1, to=2-1]
			\arrow["\subseteq"{description}, sloped, draw=none, from=3-2, to=2-2]
			\arrow["\subseteq"{description}, sloped, draw=none, from=2-2, to=1-2]
			\arrow[from=3-1, to=3-2]
		\end{tikzcd}\]
		
		\begin{theorem}
			$\Pi_{x,1}=\cInd_{G_x}^{G(F)}\sigma_{x,1}$ is a projective generator of $\mathcal{C}_{x,1}$.
		\end{theorem}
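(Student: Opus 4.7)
The plan is to verify both projectivity and generation in $\mathcal{C}_{x,1}$. Projectivity of $\Pi_{x,1}$ is essentially free: it is a direct summand of $\Pi_x$ and hence of $\Pi$, which is a projective generator of $\Rep_{\Lambda}(G(F))_0$ by \cite[Appendix]{dat2009finitude}. Therefore $\Pi_{x,1}$ is projective in $\Rep_{\Lambda}(G(F))_0$ and remains projective after restriction to the summand $\mathcal{C}_{x,1}$. That $\Pi_{x,1}$ actually lies in $\mathcal{C}_{x,1}$ follows from the well-definedness argument in the first paragraph of the proof of Theorem \ref{Thm Main}: $\sigma_{x,1} \in \mathcal{B}_{x,1}$, and $\cInd_{G_x}^{G(F)}$ carries $\mathcal{B}_{x,1}$ into $\mathcal{C}_{x,1}$.

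The heart of the argument is the orthogonality
$$\Hom_G(\Pi_{x,1}, \Pi^{x,1}) \;=\; \Hom_G(\Pi^{x,1}, \Pi_{x,1}) \;=\; 0.$$
Decompose $\sigma_x = \sigma_{x,1} \oplus \sigma_x^c$ according to the block decomposition of $\Rep_{\Lambda}(G_x)_0$, so that
$$\Pi^{x,1} \;=\; \cInd_{G_x}^{G(F)} \sigma_x^c \;\oplus\; \bigoplus_{\substack{y \in V \\ y \neq x}} \Pi_y, \qquad \Pi_y = \cInd_{G_y}^{G(F)} \sigma_y.$$
By Theorem \ref{Thm SC Red}, $\sigma_{x,1}$ has supercuspidal reduction, so the hypothesis of Theorem \ref{Thm Hom} is met for every Hom piece. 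For $y \neq x$ in $V$, no element of $G(F)$ carries $x$ to $y$, so case (2) of Theorem \ref{Thm Hom} gives $\Hom_G(\Pi_{x,1}, \Pi_y) = 0$. For the remaining piece, case (1) of Theorem \ref{Thm Hom} with $g = 1$ reduces the Hom to $\Hom_{G_x}(\sigma_{x,1}, \sigma_x^c) = 0$, since $\sigma_{x,1}$ and $\sigma_x^c$ live in complementary block-summands of $\Rep_{\Lambda}(G_x)_0$. The reverse direction is symmetric.

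Given the orthogonality, the idempotent $e_{x,1} \in \End_G(\Pi)$ projecting onto $\Pi_{x,1}$ is central, so $\End_G(\Pi) \cong \End_G(\Pi_{x,1}) \times \End_G(\Pi^{x,1})$ as rings. Since $\Pi$ is a projective generator, this ring decomposition transports via the usual equivalence to a decomposition of $\Rep_{\Lambda}(G(F))_0$ into two abelian direct factors, in which $\Pi_{x,1}$ is a projective generator of the first factor. Because $\mathcal{C}_{x,1}$ is indecomposable and contains $\Pi_{x,1}$, the block $\mathcal{C}_{x,1}$ sits inside this first factor. Conversely, the first factor cannot properly exceed $\mathcal{C}_{x,1}$: any nonzero $M$ lying in the first factor but in a block distinct from $\mathcal{C}_{x,1}$ would satisfy $\Hom_G(\Pi_{x,1}, M) = 0$ (different blocks) yet also $\Hom_G(\Pi_{x,1}, M) \neq 0$ (since $\Pi_{x,1}$ generates the first factor), a contradiction. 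Hence the first factor equals $\mathcal{C}_{x,1}$ and $\Pi_{x,1}$ is its projective generator. The principal obstacle is the orthogonality computation, which is exactly where the regularity hypothesis on $\rho$ enters through Theorem \ref{Thm SC Red}, allowing Theorem \ref{Thm Hom} to be applied.
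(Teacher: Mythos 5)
Your proof is correct and takes essentially the same route as the paper: projectivity via $\Pi_{x,1}$ being a summand of $\Pi$, the orthogonality $\Hom_G(\Pi_{x,1},\Pi^{x,1})=\Hom_G(\Pi^{x,1},\Pi_{x,1})=0$ via Theorems \ref{Thm SC Red} and \ref{Thm Hom} (Lemma \ref{Lem Ortho} in the paper), and then the Morita decomposition $\End_G(\Pi)\cong\End_G(\Pi_{x,1})\times\End_G(\Pi^{x,1})$ together with indecomposability of $\mathcal{C}_{x,1}$ to pin $\mathcal{C}_{x,1}$ inside the $\Pi_{x,1}$-factor (Lemma \ref{Lem Gen}). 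The paper anchors $\mathcal{C}_{x,1}$ to the $\Pi_{x,1}$-factor by observing $\Hom_G(\Pi_{x,1},\pi)=\Hom_{G_x}(\sigma_x,\rho)\neq 0$, whereas you note that $\Pi_{x,1}$ itself lies in both $\mathcal{C}_{x,1}$ and that factor; these are interchangeable and you additionally (though unnecessarily for the theorem) show the factor equals $\mathcal{C}_{x,1}$ rather than merely contains it.
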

		
		\begin{proof}
			First, let $\Rep_{\Lambda}(G_x)_0$ be the full subcategory of $\Rep_{\Lambda}(G_x)$ consisting of representations that are trivial on $G_x^+$ (Do not confuse with $\Rep_{\Lambda}(G(F))_0$, the depth-zero category of $G$). Note that $\Rep_{\Lambda}(G_x)_0$ is a summand of $\Rep_{\Lambda}(G_x)$ (see Lemma \ref{Lem Summand}).
			
			Second, note that $\Rep_{\Lambda}(G_x)_0 \cong \Rep_{\Lambda}(\overline{G_x})$. We may assume that $$\Rep_{\Lambda}(G_x)_0=\mathcal{B}_{x,1} \oplus ... \oplus \mathcal{B}_{x,m}$$
			is its block decomposition. So that $\sigma_x=\sigma_{x,1}\oplus...\oplus\sigma_{x,m}$ accordingly. Write $\sigma_x^1:=\sigma_{x,2}\oplus...\oplus\sigma_{x,m}$. Then $\sigma_x=\sigma_{x,1} \oplus \sigma_x^1$, and $\Pi_x=\Pi_{x,1} \oplus \Pi_x^1$ accordingly, where $\Pi_x^1:=\cInd_{G_x}^{G(F)}\sigma_x^1$. Moreover,
			$$\Pi=\Pi_{x,1}\oplus \Pi_x^1 \oplus \Pi^x,$$
			where $\Pi^x:=\bigoplus_{y \in V, y \neq x}\Pi_y$. Let $\Pi^{x,1}:=\Pi_x^1 \oplus \Pi^x$, then we have
			$$\Pi=\Pi_{x,1} \oplus \Pi^{x,1}.$$
			
			Recall that $\Pi$ is a projective generator of the category of depth-zero representations $\Rep_{\Lambda}(G(F))_0$. This implies that 
			$$\Hom_G(\Pi, -): \Rep_{\Lambda}(G(F))_0 \to \Modr\End_G(\Pi)$$
			is an equivalence of categories. See \cite[Lemma 22]{bernsteindraft}.
			
			Next, it is not hard to see that Theorem \ref{Thm Hom} implies that 
			$$\Hom_G(\Pi_{x,1}, \Pi^{x,1})=\Hom_G(\Pi^{x,1}, \Pi_{x,1})=0,$$
			see Lemma \ref{Lem Ortho}. This implies that $$\Modr\End_G(\Pi) \cong \Modr\End_G(\Pi_{x,1}) \oplus \Modr\End_G(\Pi^{x,1})$$ is an equivalence of categories.
			
			Now we can combine the above to show that $\Pi^{x,1}$ does not interfere with $\Pi_{x,1}$, i.e.,
			$$\Hom_G(\Pi^{x,1}, X)=0,$$
			for any object $X \in \mathcal{C}_{x,1}$ (see Importent Lemma \ref{Lem Gen}).
			
			However, since $\Pi$ is a projective generator of $\Rep_{\Lambda}(G(F))_0$, we have
			$$\Hom_G(\Pi, X) \neq 0,$$
			for any $X \in \mathcal{C}_{x,1}$. This together with the last paragraph implies that 
			$$\Hom_G(\Pi_{x,1}, X) \neq 0,$$
			for any $X \in \mathcal{C}_{x,1}$, i.e. $\Pi_{x,1}$ is a generator of $\mathcal{C}_{x,1}$.
			
			Finally, note that $\Pi_{x,1}$ is projective in $\Rep_{\Lambda}(G(F))_0$ since it is a summand of the projective object $\Pi$. Hence $\Pi_{x,1}$ is projective in $\mathcal{C}_{x,1}$. This together with the last paragraph implies that $\Pi_{x,1}$ is a projective generator of $\mathcal{C}_{x,1}$.

		\end{proof}

		\begin{lemma}\label{Lem Ortho}
			$$\Hom_G(\Pi_{x,1}, \Pi^{x,1})=\Hom_G(\Pi^{x,1}, \Pi_{x,1})=0.$$
		\end{lemma}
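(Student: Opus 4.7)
The plan is to decompose $\Pi^{x,1}=\Pi_x^1\oplus\Pi^x$ and reduce to four $\Hom$ computations, each of which is handled by applying Theorem \ref{Thm Hom}. The key input is that $\sigma_{x,1}\in\mathcal{B}_{x,1}$, so by Theorem \ref{Thm SC Red} the representation $\sigma_{x,1}$ has supercuspidal reduction, which means Theorem \ref{Thm Hom} is applicable in every pairing involving $\sigma_{x,1}$, regardless of what the other representation is.

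First I would deal with the ``different vertex'' part. Since $\Pi^x=\bigoplus_{y\in V,\,y\neq x}\cInd_{G_y}^{G(F)}\sigma_y$, we have
$$\Hom_G(\Pi_{x,1},\Pi^x)\;=\;\bigoplus_{y\in V,\,y\neq x}\Hom_G\!\left(\cInd_{G_x}^{G(F)}\sigma_{x,1},\,\cInd_{G_y}^{G(F)}\sigma_y\right).$$
For each such $y$, the vertices $x$ and $y$ represent distinct $G(F)$-orbits, so there is no $g\in G(F)$ with $g.x=y$; hence Theorem \ref{Thm Hom}(2) applies and each summand vanishes. The symmetric computation for $\Hom_G(\Pi^x,\Pi_{x,1})$ is identical.

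Next, for the ``same vertex'' part, consider $\Hom_G(\Pi_{x,1},\Pi_x^1)=\Hom_G(\cInd_{G_x}^{G(F)}\sigma_{x,1},\cInd_{G_x}^{G(F)}\sigma_x^1)$. Since $G$ is simply connected, the pointwise stabilizer of the vertex $x$ in $G(F)$ is exactly $G_x$, so $g=1$ is (up to $G_x$-double cosets) the only element with $g.x=x$; all other double cosets contribute zero by the supercuspidal reduction argument inside the proof of Theorem \ref{Thm Hom}. Applying Theorem \ref{Thm Hom}(1) with $g=1$ gives
$$\Hom_G(\Pi_{x,1},\Pi_x^1)\;=\;\Hom_{G_x}(\sigma_{x,1},\sigma_x^1).$$
Now by construction $\sigma_{x,1}\in\mathcal{B}_{x,1}$ and $\sigma_x^1\in\mathcal{B}_{x,2}\oplus\cdots\oplus\mathcal{B}_{x,m}$, and these are distinct blocks of $\Rep_\Lambda(G_x)_0$ (which is a summand of $\Rep_\Lambda(G_x)$ by Lemma \ref{Lem Summand}). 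Hence this $\Hom$ vanishes. The symmetric computation for $\Hom_G(\Pi_x^1,\Pi_{x,1})$ is the same, using again that $\sigma_{x,1}$ has supercuspidal reduction in order to apply Theorem \ref{Thm Hom}.

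Combining the two parts gives $\Hom_G(\Pi_{x,1},\Pi^{x,1})=0$ and $\Hom_G(\Pi^{x,1},\Pi_{x,1})=0$. There is no real obstacle: the only point requiring care is to note that only \emph{one} of the two arguments of Theorem \ref{Thm Hom} need have supercuspidal reduction, so we may freely pair $\sigma_{x,1}$ with the non-supercuspidal $\sigma_x^1$ or with the arbitrary $\sigma_y$; and to use simple-connectedness of $G$ so that the stabilizer of $x$ is precisely $G_x$, making the $g=1$ double coset the unique contribution in the $x=y$ case.
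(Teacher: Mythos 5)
Your proof is correct and takes essentially the same route as the paper: decompose $\Pi^{x,1}=\Pi_x^1\oplus\Pi^x$, invoke Theorem \ref{Thm SC Red} to confirm $\sigma_{x,1}$ has supercuspidal reduction so that Theorem \ref{Thm Hom} applies, kill the $\Pi^x$ part by case (2) since the $y\neq x$ are distinct $G(F)$-orbits, and kill the $\Pi_x^1$ part by case (1) plus the block-orthogonality of $\sigma_{x,1}$ and $\sigma_x^1$. The only addition is your explicit remark that simple-connectedness makes the stabilizer of $x$ equal to $G_x$, which the paper leaves implicit but does indeed use.
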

		
		\begin{proof}
			Recall that
			$\Pi^{x,1}:=\Pi_x^1 \oplus \Pi^x$.
			
			First, we compute
			$$\Hom_G(\Pi_{x,1}, \Pi_x^1)=\Hom_{G_x}(\sigma_{x,1}, \sigma_x^1)=0,$$
			where the first equality is the first case of Theorem \ref{Thm Hom} (note that $\sigma_{x,1} \in \mathcal{B}_{x,1}$, hence has supercuspidal reduction by Theorem \ref{Thm SC Red}, and hence the condition of Theorem \ref{Thm Hom} is satisfied), and the second equality is because $\sigma_{x,1}$ and $\sigma_x^1$ lie in different blocks of $\Rep_{\Lambda}(G_x)$ by definition.
			
			Second, recall that $\Pi_{x,1}=\cInd_{G_x}^{G(F)}\sigma_{x,1}$ with $\sigma_{x,1}$ having supercuspidal reduction, and $\Pi_y=\cInd_{G_y}^{G(F)}\sigma_y$. We compute 
			$$\Hom_G(\Pi_{x,1}, \Pi^x)=\bigoplus_{y \in V, y \neq x}\Hom_G(\Pi_{x,1}, \Pi_y)=0,$$
			by the second case of Theorem \ref{Thm Hom}.
			
			Combining the above three paragraphs, we get $\Hom_G(\Pi_{x,1}, \Pi^{x,1})=0$.
			
			A same argument shows that $\Hom_G(\Pi^{x,1}, \Pi_{x,1})=0$.
		\end{proof}
		
		\begin{lemma}[Important Lemma]\label{Lem Gen}
			$\Hom_G(\Pi^{x,1}, X)=0,$
			for any object $X \in \mathcal{C}_{x,1}$.
		\end{lemma}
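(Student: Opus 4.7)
The plan is to leverage the orthogonality from Lemma \ref{Lem Ortho} together with indecomposability of the block $\mathcal{C}_{x,1}$. The key intermediate step is to verify the claim for the single object $\pi = \cInd_{G_x}^{G(F)}\rho$, and then bootstrap from $\pi$ to all of $\mathcal{C}_{x,1}$ by an abstract block argument.

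First I would compute $\Hom_G(\Pi^{x,1}, \pi)$ directly. Recall that $\Pi^{x,1} = \Pi_x^1 \oplus \Pi^x$, so it suffices to show both $\Hom_G(\Pi_x^1, \pi) = 0$ and $\Hom_G(\Pi^x, \pi) = 0$. For the first piece, by Theorem \ref{Thm Hom} (first case, applicable because $\rho$ has supercuspidal reduction),
\[
\Hom_G(\Pi_x^1, \pi) = \Hom_G\bigl(\cInd_{G_x}^{G(F)}\sigma_x^1, \cInd_{G_x}^{G(F)}\rho\bigr) \cong \Hom_{G_x}(\sigma_x^1, \rho),
\]
and this vanishes since $\sigma_x^1$ lies in the direct sum of blocks of $\Rep_{\Lambda}(G_x)$ complementary to $\mathcal{B}_{x,1} \ni \rho$. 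For the second piece, for each $y \in V$ with $y \neq x$, no element of $G(F)$ takes $x$ to $y$ (by the very definition of $V$ as orbits), so the second case of Theorem \ref{Thm Hom} gives $\Hom_G(\Pi_y, \pi) = 0$; summing over $y \neq x$ yields $\Hom_G(\Pi^x, \pi) = 0$.

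Next I would globalize via the projective generator $\Pi$. The equivalence $\Hom_G(\Pi, -) : \Rep_{\Lambda}(G(F))_0 \to \Modr \End_G(\Pi)$ combined with Lemma \ref{Lem Ortho} gives a ring decomposition $\End_G(\Pi) = \End_G(\Pi_{x,1}) \times \End_G(\Pi^{x,1})$, hence a direct sum decomposition of abelian categories
\[
\Rep_{\Lambda}(G(F))_0 \cong \mathcal{D}_1 \oplus \mathcal{D}_2,
\]
where $\mathcal{D}_1 = \{X : \Hom_G(\Pi^{x,1}, X) = 0\}$ and $\mathcal{D}_2 = \{X : \Hom_G(\Pi_{x,1}, X) = 0\}$. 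Since $\mathcal{C}_{x,1}$ is a block, in particular indecomposable as an abelian category, the induced decomposition $\mathcal{C}_{x,1} = (\mathcal{C}_{x,1} \cap \mathcal{D}_1) \oplus (\mathcal{C}_{x,1} \cap \mathcal{D}_2)$ must have one summand equal to zero. The previous paragraph shows $\pi \in \mathcal{D}_1$, and $\pi \in \mathcal{C}_{x,1}$ by definition, so $\mathcal{C}_{x,1} \cap \mathcal{D}_1 \neq 0$. Therefore $\mathcal{C}_{x,1} \subseteq \mathcal{D}_1$, which is exactly the desired statement $\Hom_G(\Pi^{x,1}, X) = 0$ for every $X \in \mathcal{C}_{x,1}$.

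I do not anticipate a serious obstacle here; the calculation of $\Hom_G(\Pi^{x,1}, \pi)$ is a direct application of Theorem \ref{Thm Hom} once one notices that the second case handles the $y \neq x$ summands and the first case reduces the remaining piece to a $\Hom$ between representations in distinct blocks of $\Rep_{\Lambda}(G_x)$. The only point requiring care is the abstract category-theoretic step: one must be sure that a direct sum decomposition of $\Rep_{\Lambda}(G(F))_0$ coming from central idempotents of $\End_G(\Pi)$ restricts to a direct sum decomposition of the block $\mathcal{C}_{x,1}$, so that indecomposability forces one factor to vanish. This is standard, but worth spelling out by writing any $X \in \mathcal{C}_{x,1}$ as $X_1 \oplus X_2$ with $X_i \in \mathcal{D}_i$ and noting that each $X_i$ inherits membership in $\mathcal{C}_{x,1}$ because $\mathcal{C}_{x,1}$ itself is cut out by a central idempotent and hence closed under summands.
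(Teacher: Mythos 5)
Your proposal is correct and follows essentially the same route as the paper: decompose $\Rep_{\Lambda}(G(F))_0$ via the equivalence $\Hom_G(\Pi,-)$ and the orthogonality $\Hom_G(\Pi_{x,1},\Pi^{x,1})=\Hom_G(\Pi^{x,1},\Pi_{x,1})=0$, then invoke indecomposability of the block $\mathcal{C}_{x,1}$ to force it entirely into one summand. The only cosmetic difference is that you locate $\pi$ in $\mathcal{D}_1$ by directly verifying $\Hom_G(\Pi^{x,1},\pi)=0$ via Theorem \ref{Thm Hom}, whereas the paper instead observes the (slightly shorter) nonvanishing $\Hom_G(\Pi_{x,1},\pi)=\Hom_{G_x}(\sigma_{x,1},\rho)=\Hom_{G_x}(\sigma_x,\rho)\neq 0$; both pin down the same summand.
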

		
		\begin{proof}
			Recall that 
			$$\Hom_G(\Pi, -): \Rep_{\Lambda}(G(F))_0 \to \Modr\End_G(\Pi) \cong \Modr\End_G(\Pi_{x,1}) \oplus \Modr\End_G(\Pi^{x,1})$$ 
			is an equivalence of categories. It is even an equivalence of abelian categories since $\Hom_G(\Pi, -)$ is exact and commutes with direct product. Hence the image of $\mathcal{C}_{x,1}$ must be indecomposable as $\mathcal{C}_{x,1}$ is indecomposable, i.e., 
			$$\Hom_G(\Pi, -)=\Hom_G(\Pi_{x,1}, -) \oplus \Hom_G(\Pi^{x,1}, -)$$
			can map $\mathcal{C}_{x,1}$ nonzeroly to only one of $\Modr\End_G(\Pi_{x,1})$ and $\Modr\End_G(\Pi^{x,1})$ (see the diagram below). 
			
			\[\begin{tikzcd}
				{\Rep_{\Lambda}(G(F))_0} &&&& {\Modr \End_G(\Pi)} \\
				\\
				{\mathcal{C}_{x,1}} &&&& {\Modr \End_G(\Pi_{x,1}) \oplus \Modr \End_G(\Pi^{x,1})}
				\arrow["{\Hom_G(\Pi, -)}", from=1-1, to=1-5]
				\arrow["{\Hom_G(\Pi_{x,1}, -) \oplus \Hom_G(\Pi^{x,1}, -)}", from=3-1, to=3-5]
				\arrow["\subseteq", sloped, draw=none, from=3-1, to=1-1]
				\arrow["\cong", sloped, draw=none, from=3-5, to=1-5]
			\end{tikzcd}\]
			
			Then it must be $\Modr\End_G(\Pi_{x,1})$ (that $\Hom_G(\Pi, -)$ maps $\mathcal{C}_{x,1}$ nonzeroly to) since 
			$$\Hom_G(\Pi_{x,1}, \pi)=\Hom_{G_x}(\sigma_{x,1}, \rho)=\Hom_{G_x}(\sigma_x, \rho) \neq 0.$$
			In other words, $\Hom_G(\Pi^{x,1}, -)$ is zero on $\mathcal{C}_{x,1}$.
			
		\end{proof}

		\section{Application: description of the block $\Rep_{\Lambda}(G(F))_{[\pi]}$}\label{Section rep application}
		
		Recall we denote $\mathcal{A}_{x,1}=\Rep_{\Lambda}(\overline{G_x})_{[\overline{\rho}]}$, $\mathcal{B}_{x,1}=\Rep_{\Lambda}(G_x)_{[\rho]}$, and $\mathcal{C}_{x,1}=\Rep_{\Lambda}(G(F))_{[\pi]}$.
		
		We have proven that the inflation along $G_x \to \overline{G_x}$ induces an equivalence of categories 
		$$\mathcal{A}_{x,1} \cong \mathcal{B}_{x,1},$$
		see Lemma \ref{Lemma A to B}. In addition, we have also proven that compact induction induces an equivalence of categories
		$$\cInd_{G_x}^{G(F)}: \mathcal{B}_{x,1} \cong \mathcal{C}_{x,1}.$$
		
		Hence $\mathcal{C}_{x,1} \cong \mathcal{A}_{x,1}$, where the latter is isomorphic to the block of a finite torus via Broué's equivalence \ref{Thm Broué}.
		
		We will see in the example (see Chapter \ref{Chapter GL_n}) of $GL_n$ that (up to central characters) such a block of a finite torus corresponds to $\QCoh(\mu)$, where $\mu$ is the group scheme of roots of unity appearing in the computation of the $L$-parameter side (see Theorem \ref{Thm X/G}).

\chapter{Example: $GL_n(F)$}\label{Chapter GL_n}

Let's apply the theories in the previous chapters to the example of $GL_n(F)$. Throughout this chapter, $G:=GL_n$.

That said, there is a little mismatch between the theories before and the example here. Namely, we assumed for simplicity in the theories that $G$ is semisimple and simply connected, while this is not the case for $G=GL_n$. However, there is only some minor difference due to the center $\mathbb{G}_m$ of $GL_n$. Let us leave it as an exercise for the readers to figure out the details.

\section{$L$-parameter side} \label{Example Lparam}
Let $\varphi \in Z^1(W_F, \hat{G}(\overline{\mathbb{F}}_{\ell}))$ be an irreducible tame $L$-parameter. Let $\psi \in Z^1(W_F, \hat{G}(\overline{\mathbb{Z}}_{\ell}))$ be any lift of $\varphi$. Let $C_{\varphi}$ be the connected component of $Z^1(W_F, \hat{G})_{\overline{\mathbb{Z}}_{\ell}}/\hat{G}$ containing $\varphi$. By Proposition \ref{Proposition: T times mu/T}, we have
$$C_{\varphi} \cong [T/T] \times \mu,$$
where $T=C_{\hat{G}}(\psi_{\ell})$ is a maximal torus of $GL_n$, and $\mu=(T^{Fr=(-)^q})^0$, and $t \in T$ acting on $T$ via multiplication by $tnt^{-1}n^{-1}$, where $n=\psi(\Fr)$. To go further, let's choose a nice basis for the Weil group representations $\varphi$ and $\psi$.

Indeed, every irreducible tame $L$-parameter $\varphi$ with $\overline{\mathbb{F}}_{\ell}$-coefficients of $GL_n$ are of the form $\varphi=\Ind_{W_E}^{W_F}\eta$, where $E$ is a degree $n$ unramified extension of $F$, $W_E \cong I_F \rtimes \left<\Fr^n\right>$ is the Weil group of $E$, and $\eta: W_E \to \overline{\mathbb{F}}_{\ell}^*$ is a tame (i.e., trivial on $P_E=P_F$) character of $W_E$ such that $\{\eta, \eta^q, ..., \eta^{q^{n-1}}\}$ are distinct (see \cite[Chapter 3]{macdonald1980zeta}). To find a lift of $\varphi$ with $\overline{\mathbb{Z}}_{\ell}$-coefficients, we let $\tilde{\eta}: W_E \to \overline{\mathbb{Z}}_{\ell}^*$ be any lift of $\eta$, and let $\psi:=\Ind_{W_E}^{W_F}\tilde{\eta}$. Then under a nice basis, we can specify the matrices corresponding to the topological generator $s_0$ and the Frobenius $\Fr$:
$$\psi(s_0)=
\begin{bmatrix}\label{Matrices}
	\tilde{\eta}(s_0) & 0                   & 0      & \dots  & 0 \\
	0                 & \tilde{\eta}(s_0)^q & 0      & \dots  & 0 \\
	\vdots            & \vdots              & \vdots & \ddots & \vdots \\
	0                 & 0                   & 0      & \dots   & \tilde{\eta}(s_0)^{q^{n-1}}
\end{bmatrix}$$
and 
$$\psi(\Fr)=
\begin{bmatrix}
	0                   & 1      & 0      & \dots  & 0 \\
	0                   & 0      & 1      & \dots  & 0 \\
	\vdots              & \vdots & \vdots & \ddots & \vdots \\
	0                   & 0      & 0      & \dots  & 1 \\
	\tilde{\eta}(\Fr^n) & 0      & 0      & \dots  & 0
\end{bmatrix}
.$$
Under this basis, $T=C_{\hat{G}}(\psi_{\ell})$ is the diagonal torus of $GL_n$, with $\Fr$ acting by conjugation via $\psi$, i.e., 
$$\Fr. \diag(t_1, t_2, ..., t_{n-1}, t_{n}) = \psi(\Fr)\diag(t_1, t_2, ..., t_{n-1}, t_{n})\psi(\Fr)^{-1} = \diag(t_{2}, t_{3}, ..., t_{n}, t_{1}).$$
So one can compute that 
$$T^{\Fr=(-)^q}\cong \mu_{q^n-1},$$
and that
$$(T^{\Fr=(-)^q})^0 \cong \mu_{\ell^k},$$
where $k \in \mathbb{Z}$ is maximal such that $\ell^k$ divides $q^n-1$.

To compute the quotient $[T/T]$, we note that $T$ acts on $T$ via twisted conjugation
$$(t, t') \mapsto (tnt^{-1}n^{-1})t',$$
where $n=\psi(Fr)$. So in our case, this action is 
$$(t_1, t_2, ..., t_n).(t'_1, t'_2, ..., t'_n)=(t_n^{-1}t_1t'_1, t_1^{-1}t_2t'_2, ..., t_{n-1}^{-1}t_nt'_n).$$ 
We see that the orbits of this action are determined by the determinants (hence are in bijection with $\mathbb{G}_m$), and the center $\mathbb{G}_m \cong Z(\hat{G}) \subseteq T$ acts trivially. Therefore,
$$[T/T] \cong [\mathbb{G}_m/\mathbb{G}_m],$$
where $\mathbb{G}_m$ acts trivially on $\mathbb{G}_m$.

In conclusion, we have that the connected component of $Z^1(W_F, \hat{G})_{\overline{\mathbb{Z}}_{\ell}}$ containing $\varphi$ is
$$C_{\varphi} \cong [\mathbb{G}_m/\mathbb{G}_m] \times \mu_{\ell^k},$$
where $\mathbb{G}_m$ acts trivially on $\mathbb{G}_m$, and $k \in \mathbb{Z}$ is maximal such that $\ell^k$ divides $q^n-1$.

\section{Representation side}

By modular Deligne-Lusztig theory, the block $\mathcal{A}_{x,1}$ of $GL_n(\mathbb{F}_q)$ containing an irreducible supercuspidal representation $\sigma$ is equivalent to the block of an elliptic torus. Such an elliptic torus is isomorphic to $\mathbb{F}_{q^n}^*$. So this block is equivalent to $\overline{\mathbb{Z}}_{\ell}[s]/(s^{\ell^k}-1)\Modl$, where $k \in \mathbb{Z}$ is maximal such that $\ell^k$ divides $q^n-1$.

$\mathcal{A}_{x,1}$ inflats to a block of $K:=GL_n(\mathcal{O}_F)$ containing the inflation $\tilde{\sigma}$\footnote{Since we started with an irreducible supercuspidal representation $\sigma$, its inflation $\tilde{\sigma}$ automatically has supercuspidal reduction.} of $\sigma$, and further corresponds to a block $\mathcal{B}_{x,1}$ of $KZ$ containing $\rho$, an extension of $\tilde{\sigma}$ to $KZ$, where $Z$ is the center of $GL_n(F)$. We have
$$\mathcal{B}_{x,1} \cong \mathcal{A}_{x,1} \otimes \Rep_{\overline{\mathbb{Z}}_{\ell}}(\mathbb{Z}) \cong \overline{\mathbb{Z}}_{\ell}[s]/(s^{\ell^k}-1) \otimes \overline{\mathbb{Z}}_{\ell}[t, t^{-1}]\Modl,$$
because
$$KZ \cong K \times \{\diag(\pi^m, ..., \pi^m) \;|\; m \in \mathbb{Z}\} \cong K \times \mathbb{Z}.$$
Argue as in the proof of Theorem \ref{Thm Main} we see that the compact induction $\cInd_{KZ}^G$ induces an equivalence of categories
$$\mathcal{B}_{x,1} \cong \mathcal{C}_{x,1},$$
where $\mathcal{C}_{x,1}$ is the block of $\Rep_{\overline{\mathbb{Z}}_{\ell}}(G(F))$ containing $\pi:=\cInd_{KZ}^G\rho$.

Since every irreducible depth-zero supercuspidal representation $\pi$ arises as above, we have that the block containing an irreducible depth-zero supercuspidal representation $\pi$ satisfies
$$\Rep_{\overline{\mathbb{Z}}_{\ell}}(G(F))_{[\pi]} \cong \mathcal{C}_{x,1} \cong \overline{\mathbb{Z}}_{\ell}[s]/(s^{\ell^k}-1) \otimes \overline{\mathbb{Z}}_{\ell}[t, t^{-1}]\Modl,$$
where $k \in \mathbb{Z}$ is maximal such that $\ell^k$ divides $q^n-1$.

\chapter{The categorical local Langlands conjecture} \label{Chapter CLLC}

In this chapter, we prove the categorical local Langlands conjecture for depth-zero supercuspidal part of $G=GL_n$ with coefficients $\Lambda=\overline{\mathbb{Z}}_{\ell}$ in Fargues-Scholze's form (see \cite[Conjecture X.3.5]{fargues2021geometrization}).

Let $\varphi \in Z^1(W_F, \hat{G}(\overline{\mathbb{F}}_{\ell}))$ be an irreducible tame $L$-parameter. Let $C_{\varphi}$ be the connected component of $[Z^1(W_F, \hat{G})_{\overline{\mathbb{Z}}_{\ell}}/\hat{G}]$ containing $\varphi$. 

The goal is to show that there is an equivalence
$$\mathcal{D}_{\lis}^{C_{\varphi}}(\Bun_G, \overline{\mathbb{Z}}_{\ell})^{\omega} \cong \mathcal{D}^{b, \qc}_{\Coh, \Nilp}(C_{\varphi})$$
of derived categories.

As a first step, let's unravel the definitions of both sides and describe them explicitly.

\section{Unraveling definitions}

\subsection{$L$-parameter side}

Let us first state a lemma that makes the decorations in $\mathcal{D}^{b, \qc}_{\Coh, \Nilp}(C_{\varphi})$ go away. We postpone its proof to Subsection \ref{Subsection Nilp}.

\begin{lemma} \label{Lemma 1}
	$\mathcal{D}^{b, \qc}_{\Coh, \Nilp}(C_{\varphi}) \cong \mathcal{D}^{b}_{\Coh, \Nilp}(C_{\varphi}) \cong \mathcal{D}^b_{\Coh, \{0\}}(C_{\varphi}) \cong \Perf(C_{\varphi}).$
\end{lemma}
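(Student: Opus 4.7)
The plan is to establish the three isomorphisms in sequence, using the explicit description $C_\varphi \cong [\mathbb{G}_m/\mathbb{G}_m]\times \mu_{\ell^k}$ obtained in Section \ref{Example Lparam} together with a general theorem of Arinkin--Gaitsgory on quasi-smooth stacks.

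First, I would observe that $C_\varphi$ is quasi-compact: the stack $[\mathbb{G}_m/\mathbb{G}_m]$ (with trivial action) is isomorphic to $\mathbb{G}_m \times B\mathbb{G}_m$, which is quasi-compact, and $\mu_{\ell^k}$ is affine, hence quasi-compact. Consequently, every coherent sheaf on $C_\varphi$ automatically has quasi-compact support, which yields the first isomorphism $\mathcal{D}^{b,\qc}_{\Coh,\Nilp}(C_\varphi) \cong \mathcal{D}^b_{\Coh,\Nilp}(C_\varphi)$.

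Second, I would identify the nilpotent singular support with the zero section on $C_\varphi$. By the construction of $Z^1(W_F, \hat{G})/\hat{G}$ in \cite{dhkm2020moduli, dat2022ihes}, this stack is quasi-smooth, and its singular cotangent stack at a point $[\varphi']$ is identified with the space $(\hat{\mathfrak{g}})^{\varphi'}$ of $\varphi'$-invariants in the Lie algebra $\hat{\mathfrak{g}}$ of $\hat{G}$. The locus $\Nilp$ consists of pairs $(\varphi', \xi)$ with $\xi$ nilpotent in $\hat{\mathfrak{g}}$. For a TRSELP $\varphi$, ellipticity (condition \ref{elliptic} of Definition \ref{Def TRSELP}) gives $C_{\hat{G}}(\varphi')^0 = Z(\hat{G})^0$ for every $\varphi' \in C_\varphi$, so $(\hat{\mathfrak{g}})^{\varphi'}$ is contained in the Lie algebra of $Z(\hat{G})^0$. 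For $G = GL_n$, the center $Z(\hat{G})^0 = \mathbb{G}_m$ is a torus, whose Lie algebra consists entirely of semisimple elements. Therefore the only nilpotent invariant is $0$, which proves $\Nilp \cap \Sing(C_\varphi) = \{0\}$ and hence the second isomorphism.

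Finally, the equivalence $\mathcal{D}^b_{\Coh,\{0\}}(C_\varphi) \cong \Perf(C_\varphi)$ is the Arinkin--Gaitsgory characterization of perfect complexes on a quasi-smooth derived stack: a coherent complex is perfect if and only if its singular support is contained in the zero section. Since $C_\varphi$ inherits quasi-smoothness from $Z^1(W_F,\hat{G})/\hat{G}$, the theorem applies directly. The main obstacle I anticipate is the middle step: over the coefficient ring $\overline{\mathbb{Z}}_\ell$ (rather than a field of characteristic zero), the factor $\mu_{\ell^k}$ contributes genuine derived structure from the $\ell$-power torsion, and one must verify that the abstract identification of the singular cotangent with $\hat{\mathfrak{g}}$-invariants remains valid in this mixed-characteristic setting. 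Once this is in place, the ellipticity hypothesis built into the TRSELP definition reduces the computation to central semisimple invariants, where the conclusion is immediate.
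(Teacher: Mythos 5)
Your first and third isomorphisms line up with the paper's proof: the paper also observes that $C_\varphi$ is connected (hence the quasi-compact support condition is automatic), and cites the Arinkin--Gaitsgory-type result in the form \cite[Theorem VIII.2.9]{fargues2021geometrization}. The second isomorphism, however, has a real gap.

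You identify the singular cotangent at $[\varphi']$ with $(\hat{\mathfrak{g}})^{\varphi'} = H^0(W_F, \hat{\mathfrak{g}})$, i.e.\ with $\operatorname{Lie}\bigl(C_{\hat{G}}(\varphi')\bigr)$. This is not the right space. For $[Z^1(W_F,\hat{G})/\hat{G}]$, which is quasi-smooth, the fibre of the scheme of singularities at $\varphi'$ is the degree-$2$ Galois cohomology $H^2(W_F,\hat{\mathfrak{g}})$ (the obstruction space), which by local Tate duality is dual to $H^0\bigl(W_F, \hat{\mathfrak{g}}^*\otimes k(1)\bigr)$. This is exactly what the paper writes down. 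The dual and the Tate twist are both load-bearing: the twist changes the Frobenius action by the scalar $q^{\pm 1}$, so $H^0(W_F,\hat{\mathfrak{g}}^*(1))$ is in general unrelated to $\operatorname{Lie}\bigl(C_{\hat{G}}(\varphi')\bigr)$, and the ellipticity hypothesis you invoke does not apply to it. (There is also the secondary issue that you silently extend the ellipticity equality $C_{\hat{G}}(\varphi')^0 = Z(\hat{G})^0$ from the special point $\varphi$ to every point $\varphi'$ of $C_\varphi$, which would need an argument.)

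The correct conceptual route --- and the one the paper's matrix computation is implicitly following --- uses only the inertia part and the regular-semisimple condition, not ellipticity. Since $\ell\neq p$, the $\ell$-adic cyclotomic character is unramified, so the Tate twist is invisible on $I_F$: $H^0(I_F,\hat{\mathfrak{g}}^*(1)) = (\hat{\mathfrak{g}}^*)^{\varphi'(I_F)}$, which by condition~\ref{regular semisimple} of Definition \ref{Def TRSELP} is the dual Cartan $\operatorname{Lie}(S)^*$. The full $W_F$-invariants sit inside this, and the dual Cartan meets the nilpotent cone $\mathcal{N}^*_{\hat{G}}$ only in $\{0\}$, which is the desired conclusion. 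Your mixed-characteristic concern about $\mu_{\ell^k}$ is legitimate and the paper handles it by carrying out the computation fibrewise at points valued in an arbitrary algebraically closed $\Lambda$-field, which is what you should do here as well.
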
 
	
Let us assume the lemma for the moment. By our computation before,
$$C_{\varphi} \cong [\mathbb{G}_m/\mathbb{G}_m] \times \mu_{\ell^k} \cong \mathbb{G}_m \times [*/\mathbb{G}_m] \times \mu_{\ell^k},$$
where $k \in \mathbb{Z}_{\geq 0}$ is maximal such that $\ell^k$ divides $q^n-1$. So
$$\Perf(C_{\varphi}) \cong \Perf(\mathbb{G}_m \times [*/\mathbb{G}_m] \times \mu_{\ell^k}) \cong \Perf(\mathbb{G}_m) \otimes \Perf([*/\mathbb{G}_m]) \otimes \Perf(\mu_{\ell^k}).$$
Here, since the category of algebraic representations of the algebraic group $\mathbb{G}_m$ is semisimple (see for example, \cite[I.2.11]{jantzen2003representation}),
$$\Perf([*/\mathbb{G}_m]) \cong \bigoplus_{\chi}\Perf(\overline{\mathbb{Z}}_{\ell})\chi \cong \bigoplus_{\chi}\Perf(\overline{\mathbb{Z}}_{\ell}),$$
where $\chi$ runs over characters of $\mathbb{G}_m$ 
$$X^*(\mathbb{G}_m)=\{t \mapsto t^m \;|\; m \in \mathbb{Z}\} \cong \mathbb{Z}.$$

In conclusion, we have 
$$\Perf(C_{\varphi}) \cong \bigoplus_{\chi}\Perf(\mathbb{G}_m \times \mu_{\ell^k}),$$
where $\chi$ runs over characters of $\mathbb{G}_m$ 
$$X^*(\mathbb{G}_m)=\{t \mapsto t^m \;|\; m \in \mathbb{Z}\} \cong \mathbb{Z}.$$

\subsection{$\Bun_G$ side}

We refer the reader to \cite[Chapter 1]{fargues2021geometrization} for details of the notions used below. Let $\Bun_G$ be the stack of $G$-bundles on the Fargues-Fontaine curve.

Since $\varphi$ is irreducible, 
$$\mathcal{D}^{C_{\varphi}}_{\lis}(\Bun_G, \overline{\mathbb{Z}}_{\ell})^{\omega} \cong \mathcal{D}^{C_{\varphi}}_{\lis}(\Bun_G^{\sss}, \overline{\mathbb{Z}}_{\ell})^{\omega},\footnote{See \cite[Definition VII.6.1]{fargues2021geometrization} for the definition of $\mathcal{D}_{\lis}$.}$$
where $\omega$ means the subcategory of compact objects, and $\Bun_G^{\sss}$ is the semistable locus of $\Bun_G$.
See \cite[Section X.2]{fargues2021geometrization}.

Since
$$\Bun_G^{\sss} \cong \bigsqcup_{b \in B(G)_{\basic}}[*/G_b(F)],\footnote{See \cite[Theorem I.4.1]{fargues2021geometrization}.}$$
we have 
$$\mathcal{D}^{C_{\varphi}}_{\lis}(\Bun_G^{\sss}, \overline{\mathbb{Z}}_{\ell})^{\omega} \cong \bigoplus_{b \in B(G)_{\basic}}\mathcal{D}^{C_{\varphi}}(G_b(F), \overline{\mathbb{Z}}_{\ell})^{\omega},$$
where $B(G)_{\basic}$ is the subset of basic elements in the Kottwitz set $B(G)$ of $G$-isocrystals.

Let us look closer into each direct summand. In our case $G=GL_n$, a $G$-isocrystal is a rank $n$ isocrystal, and it is basic precisely when it has only one slope. So we have
$$B(G)_{\basic} \cong \pi_1(G)_{\Gamma} \cong \mathbb{Z}.$$ 

Let us first look at the summand for $b=1$ (corresponding to $0 \in \mathbb{Z} \cong B(G)_{\basic}$). For $b=1$, $G_b \cong GL_n$, and 
$$\mathcal{D}^{C_{\varphi}}(G_b(F), \overline{\mathbb{Z}}_{\ell})^{\omega} \cong \mathcal{D}^{C_{\varphi}}(GL_n(F), \overline{\mathbb{Z}}_{\ell})^{\omega} \cong \mathcal{D}(\Rep_{\overline{\mathbb{Z}}_{\ell}}(GL_n(F))_{[\pi]})^{\omega},$$
where $\pi \in \Rep_{\overline{\mathbb{F}}_{\ell}}(GL_n(F))$ is the representation with $L$-parameter $\varphi$, and $\Rep_{\overline{\mathbb{Z}}_{\ell}}(GL_n(F))_{[\pi]}$ is the block of $\Rep_{\overline{\mathbb{Z}}_{\ell}}(GL_n(F))$ containing $\pi$.
In addition, we've computed in Chapter \ref{Chapter GL_n} that
$$\Rep_{\overline{\mathbb{Z}}_{\ell}}(GL_n(F))_{[\pi]} \cong \overline{\mathbb{Z}}_{\ell}[t, t^{-1}] \otimes \overline{\mathbb{Z}}_{\ell}[s]/(s^{\ell^k}-1)\Modl \cong \QCoh(\mathbb{G}_m \times \mu_{\ell^k}),$$
where $k \in \mathbb{Z}_{\geq 0}$ is again maximal such that $\ell^k$ divides $p^n-1$. So we have
$$\mathcal{D}^{C_{\varphi}}(GL_n(F), \overline{\mathbb{Z}}_{\ell})^{\omega} \cong \mathcal{D}(\QCoh(\mathbb{G}_m \times \mu_{\ell^k}))^{\omega} \cong \Perf(\mathbb{G}_m \times \mu_{\ell^k}).$$

We can get a similar description of $\mathcal{D}^{C_{\varphi}}(G_b(F), \overline{\mathbb{Z}}_{\ell})$ (with arbitrary $b$) for free by the spectral action and its compatibility with $\pi_1(G)_{\Gamma}$-grading. For this, we consider the composition
$$q: C_{\varphi} \cong \mathbb{G}_m \times [*/\mathbb{G}_m] \times \mu_{\ell^k} \to [*/\mathbb{G}_m].$$
Recall that 
$$\Perf([*/\mathbb{G}_m]) \cong \bigoplus_{\chi}\Perf(\overline{\mathbb{Z}}_{\ell})\chi.$$
For any $\chi$, we denote by $\mathcal{M}_{\chi}$ the corresponding simple object in $\Perf([*/\mathbb{G}_m])$. Moreover, $\mathcal{M}_{\chi}$ pullbacks to a line bundle on $C_{\varphi}$
$$\mathcal{L}_{\chi}:=q^*\mathcal{M}_{\chi}.$$
We can now state the key proposition that allows us to get to arbitrary $b \in B(G)_{\basic}$ from the $b=1$ case, using the spectral action.
\begin{proposition}\label{Prop Spectral action}\
	\begin{enumerate}
		\item The restriction of the spectral action by $\mathcal{L}_{\chi}$ to $\mathcal{D}(G_b(F), \overline{\mathbb{Z}}_{\ell})$ factors through $\mathcal{D}(G_{b-\chi}(F), \overline{\mathbb{Z}}_{\ell})$.\footnote{Here we identify both $X^*(\mathbb{G}_m) \cong X^*(Z(\hat{G}))$ and $B(G)_{\basic} \cong \pi_1(G)_{\Gamma}$ with $\mathbb{Z}$. Hence $b-\chi$ makes sense.}
		\begin{tikzcd}
			{\mathcal{L}_{\chi}*-:} & {\mathcal{D}_{\lis}(\Bun_G, \overline{\mathbb{Z}}_{\ell})} && {\mathcal{D}_{\lis}(\Bun_G, \overline{\mathbb{Z}}_{\ell})} \\
			\\
			& {\mathcal{D}(G_b(F), \overline{\mathbb{Z}}_{\ell})} && {\mathcal{D}(G_{b-\chi}(F), \overline{\mathbb{Z}}_{\ell})}
			\arrow[from=1-2, to=1-4]
			\arrow[dashed, from=3-2, to=3-4]
			\arrow["\subseteq", sloped, from=3-2, to=1-2]
			\arrow["\subseteq", sloped, from=3-4, to=1-4]
		\end{tikzcd}
		\item $\mathcal{L}_{\chi}*-: \mathcal{D}(G_b(F), \overline{\mathbb{Z}}_{\ell}) \to \mathcal{D}(G_{b-\chi}(F), \overline{\mathbb{Z}}_{\ell})$ is an equivalence of categories, with inverse $\mathcal{L}_{\chi^{-1}}*-$.
	\end{enumerate}
\end{proposition}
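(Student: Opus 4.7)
The plan is to deduce both parts of the proposition from a general compatibility between the spectral action and the $\pi_1(G)_{\Gamma}$-grading on $\Bun_G$, which, as noted in the introduction, is already available in the literature (cf.\ \cite{fargues2021geometrization}, and its digestion in \cite{zou2022categorical}). By construction, the line bundle $\mathcal{L}_{\chi} = q^{*}\mathcal{M}_{\chi}$ is pulled back from $[*/\mathbb{G}_m] \cong [*/Z(\hat{G})]$ along the map that, in concrete terms, records the central character of an $L$-parameter. In particular, $\mathcal{L}_{\chi}$ extends to a global line bundle on $[Z^1(W_F, \hat{G})/\hat{G}]$ coming from $X^{*}(Z(\hat{G}))$, and this is the form in which the general compatibility result applies.

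For part (1), the key input I would invoke is the general principle that the spectral action of a line bundle pulled back from $[*/Z(\hat{G})]$ commutes with the decomposition $\Bun_G = \bigsqcup_{\alpha \in \pi_1(G)_{\Gamma}} \Bun_G^{\alpha}$ and acts as a shift on the indexing set: the character $\chi \in X^{*}(Z(\hat{G}))$ sends the $b$-component to the $(b-\chi)$-component under the identifications $X^{*}(Z(\hat{G})) \cong \mathbb{Z} \cong \pi_1(G)_{\Gamma} \cong B(G)_{\basic}$. Restricting this to the semistable locus at $b$ (i.e.\ to $[*/G_b(F)]$) and intersecting with the $C_{\varphi}$-part then yields the claimed factorization through $\mathcal{D}(G_{b-\chi}(F), \overline{\mathbb{Z}}_{\ell})$.

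For part (2), it suffices to observe that $\mathcal{L}_{\chi} \otimes \mathcal{L}_{\chi^{-1}} \cong \mathcal{O}_{C_{\varphi}}$, which already holds on $[*/\mathbb{G}_m]$ and is preserved by $q^{*}$ since pullback is monoidal. Because the spectral action is itself monoidal, the composition $(\mathcal{L}_{\chi} * -) \circ (\mathcal{L}_{\chi^{-1}} * -)$ is naturally isomorphic to the identity, and symmetrically for the other order. Combined with (1), this shows that $\mathcal{L}_{\chi} * -$ and $\mathcal{L}_{\chi^{-1}} * -$ are mutually inverse equivalences between $\mathcal{D}^{C_{\varphi}}(G_b(F), \overline{\mathbb{Z}}_{\ell})$ and $\mathcal{D}^{C_{\varphi}}(G_{b-\chi}(F), \overline{\mathbb{Z}}_{\ell})$.

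The main obstacle I expect is bookkeeping: pinning down the identifications $X^{*}(\mathbb{G}_m) \cong X^{*}(Z(\hat{G})) \cong \mathbb{Z}$ and $B(G)_{\basic} \cong \pi_1(G)_{\Gamma} \cong \mathbb{Z}$, and verifying compatibility with the Kottwitz homomorphism and the chosen normalization of the spectral action, so as to confirm that the shift induced by $\chi$ is indeed $b \mapsto b-\chi$ rather than $b \mapsto b+\chi$. Once the statement of the compatibility from \cite{fargues2021geometrization}/\cite{zou2022categorical} is imported with care, both (1) and (2) fall out without further calculation.
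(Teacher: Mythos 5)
Your proposal matches the paper's proof: for part (1) the paper simply cites \cite[Lemma~5.3.2]{zou2022categorical}, which is exactly the compatibility-with-the-$\pi_1(G)_\Gamma$-grading statement you invoke, and for part (2) your argument that $q^*$ is monoidal and hence $\mathcal{L}_\chi$, $\mathcal{L}_{\chi^{-1}}$ are mutually inverse under the (monoidal) spectral action is precisely what the paper says. Your bookkeeping caveat about the sign $b\mapsto b-\chi$ is prudent but, as you anticipated, is absorbed into the cited lemma.
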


\begin{proof}
	For the first assertion, see \cite[Lemma 5.3.2]{zou2022categorical}. For the second assertion, note that $\mathcal{L}_{\chi}$ and $\mathcal{L}_{\chi^{-1}}$ are inverse to each other once they are well-defined, since $q^*$ preserves tensor product.
\end{proof}
So we have 
$$\mathcal{D}^{C_{\varphi}}(\Bun_G, \overline{\mathbb{Z}}_{\ell})^{\omega} \cong \bigoplus_{b \in B(G)_{\basic}}\mathcal{D}^{C_{\varphi}}(G_b(F), \overline{\mathbb{Z}}_{\ell})^{\omega} \cong \bigoplus_{b \in B(G)_{\basic}}\Perf(\mathbb{G}_m \times \mu_{\ell^k}).$$

\subsection{The nilpotent singular support condition} \label{Subsection Nilp}
Now we prove Lemma \ref{Lemma 1}. 

The first isomorphism is because $C_{\varphi}$ is connected, hence the quasi-compact support condition $\qc$ is automatic. 

The second isomorphism needs some computation. For the definition and properties of the nilpotent singular support condition $\Nilp$, we refer to \cite[Section VIII.2]{fargues2021geometrization}. At the end of the day, it boils down to the fact that for any point $\varphi'$ in $C_{\varphi}$ valued in an algebraically closed $\Lambda$-field $k$,
$$\left(x_{\varphi'}^*\Sing_{[Z^1(W_F, \hat{G})/\hat{G}]/\Lambda}\right)\cap \left(\mathcal{N}_{\hat{G}}^*\otimes _{\mathbb{Z}_{\ell}}k\right) \cong H^0(W_F, \hat{\mathfrak{g}}^*\otimes_{\mathbb{Z}_{\ell}}k(1)) \cap \left(\mathcal{N}_{\hat{G}}^*\otimes _{\mathbb{Z}_{\ell}}k\right)=\{0\},$$
where $\hat{\mathfrak{g}}^*$ is the dual of the adjoint representation of $\hat{G}$, $W_F$ acts by conjugacy on $\hat{\mathfrak{g}}$ through $\varphi'$ (and then taking dual and Tate twist to get the action on $\hat{\mathfrak{g}}^*\otimes_{\mathbb{Z}_{\ell}}k(1)$), and $\mathcal{N}_{\hat{G}}^* \subseteq \hat{\mathfrak{g}}^*$ is the nilpotent cone.

In our case, $\hat{G}=GL_n$, $\hat{\mathfrak{g}}=M_{n\times n}$ is the set of $n \times n$ matrices. Take $\varphi'=\varphi$ for example (the similar argument works for any $\varphi'$ in $C_{\varphi}$). $W_F$ acts by conjugacy on $\hat{\mathfrak{g}}=M_{n\times n}$ through $\varphi$, hence induces an action of $W_F$ on the dual space with Tate twist $\hat{\mathfrak{g}}^*\otimes_{\mathbb{Z}_{\ell}}k(1)$. One can use the explicit matrices \ref{Matrices} of $s_0$ to compute that the fixed points $H^0(W_F, \hat{\mathfrak{g}}^*\otimes_{\mathbb{Z}_{\ell}}k(1))$ is contained in the (dual of) the diagonal torus of $M_{n\times n}^*$, the dual Lie algebra $\hat{\mathfrak{g}}^*$. On the other hand, the nilpotent cone $\mathcal{N}_{\hat{G}}^*$ is nothing else than the (dual of) nilpotent matrices in $M_{n\times n}^*$. So we conclude that 
$$H^0(W_F, \hat{\mathfrak{g}}^*\otimes_{\mathbb{Z}_{\ell}}k(1)) \cap \left(\mathcal{N}_{\hat{G}}^*\otimes _{\mathbb{Z}_{\ell}}k\right)=\{0\}.$$

The last isomorphism of Lemma \ref{Lemma 1} is \cite[Theorem VIII.2.9]{fargues2021geometrization}.

\section{The spectral action induces an equivalence of categories}
To summarize, we have (abstract) equivalences of categories
$$\mathcal{D}^{b, \qc}_{\Coh, \Nilp}(C_{\varphi}) \cong \bigoplus_{\chi \in \mathbb{Z}}\Perf(\mathbb{G}_m \times \mu_{\ell^k}) \cong \bigoplus_{b \in \mathbb{Z}}\Perf(\mathbb{G}_m \times \mu_{\ell^k}) \cong \mathcal{D}^{C_{\varphi}}_{\lis}(\Bun_G, \overline{\mathbb{Z}}_{\ell})^{\omega},$$
where we identified both $X^*(\mathbb{G}_m) \cong X^*(Z(\hat{G}))$ and $B(G)_{\basic} \cong \pi_1(G)_{\Gamma}$ with $\mathbb{Z}$. The next goal is to show that the spectral action induces an equivalence of categories
\begin{equation}\label{Equiv}
	\mathcal{D}_{\lis}^{C_{\varphi}}(\Bun_G, \overline{\mathbb{Z}}_{\ell})^{\omega} \cong \mathcal{D}^{b, \qc}_{\Coh, \Nilp}(C_{\varphi}).
\end{equation}

\subsection{Definition of the functor}

Let's first define the functor. For this, let's choose a Whittaker datum consisting of a Borel $B \subseteq G$ and a generic character $\vartheta: U(F) \to \overline{\mathbb{Z}}_{\ell}^*$, where $U$ is the unipotent radical of $B$. Let $\mathcal{W}_{\vartheta}$ be the sheaf concentrated on $\Bun_G^1$ corresponding to the representation $W_{\vartheta}:=\cInd_{U(F)}^{G(F)}\vartheta$. Let $W_{\vartheta, [\pi]}$ be the restriction of $W_{\vartheta}$ to the block $\Rep_{\overline{\mathbb{Z}}_{\ell}}(G(F))_{[\pi]}$, and $\mathcal{W}_{\vartheta, [\pi]}$ the corresponding sheaf.

We define our desired functor by spectral acting on $\mathcal{W}_{\vartheta, [\pi]}$:
$$\Theta: \mathcal{D}^{b, \qc}_{\Coh, \Nilp}(C_{\varphi}) \cong \Perf(C_{\varphi}) \longrightarrow \mathcal{D}_{\lis}^{C_{\varphi}}(\Bun_G, \overline{\mathbb{Z}}_{\ell})^{\omega}, \qquad A \mapsto A*\mathcal{W}_{\vartheta, [\pi]}.$$

\subsection{Equivalence on the degree zero part}

We now show that $\Theta$ induces a derived equivalence on the degree zero part. Before that, we do some preparations.

The main input is local Langlands in families (see \cite{helm2018converse}): For $G=GL_n$, there are natural isomorphisms
$$\mathcal{O}(Z^1(W_F, \hat{G})_{\Lambda}/\hat{G}) \cong \mathcal{Z}_{\Lambda}(G(F)) \cong \End_{G}(W_{\vartheta}),$$
where $\mathcal{Z}_{\Lambda}(G(F))$ is the Bernstein center of $\Rep_{\Lambda}(G(F))$; the first map is the unique map between $\mathcal{O}(Z^1(W_F, \hat{G})_{\Lambda}/\hat{G})$ and $\mathcal{Z}_{\Lambda}(G(F))$ that is compatible with the classical local Langlands correspondence for $GL_n$, hence also same as the map defined in \cite[Section VIII.4]{fargues2021geometrization}; the second map is given by the action of the Bernstein center on the representation $W_{\vartheta}$.

We shall also use the following two Lemmas: 

\begin{lemma}\label{Lemma Whittaker is proj gen}
	The restriction of the Whittaker representation $W_{\vartheta, [\pi]}$ is a finitely generated projective generator of $\Rep_{\Lambda}(G(F))_{[\pi]}$.
\end{lemma}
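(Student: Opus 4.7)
The plan is to verify the three properties of $W_{\vartheta,[\pi]}$---projectivity, generation, and finite generation---separately, relying on structural results for Whittaker models of $GL_n$ together with the explicit description of the block obtained in Chapter \ref{Chapter GL_n}.

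For projectivity, I would first argue that the full Whittaker representation $W_\vartheta = \cInd_{U(F)}^{G(F)} \vartheta$ is already projective in $\Rep_{\Lambda}(G(F))$. By Frobenius reciprocity, $\Hom_G(W_\vartheta, V) = \Hom_{U(F)}(\vartheta, V|_{U(F)})$ for any smooth $V$, and this functor is exact because $U(F)$ is a union of compact open pro-$p$ subgroups and $\ell \neq p$, so averaging against $\vartheta$ over a fundamental system of such subgroups yields an exact idempotent. Since the decomposition of $\Rep_{\Lambda}(G(F))$ into blocks corresponds to a decomposition of the identity into central idempotents of the Bernstein center, $W_{\vartheta,[\pi]}$ is a direct summand of $W_\vartheta$ and hence also projective. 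Generation then amounts to checking that every simple $\pi' \in \Rep_{\Lambda}(G(F))_{[\pi]}$ admits a nonzero Whittaker model: by Theorem \ref{Thm SC Red} every such simple has supercuspidal reduction, and depth-zero supercuspidals of $GL_n$ are known to be generic (Gelfand--Kazhdan over $\mathbb{C}$, with the integral analogue due to Vignéras), so $\Hom_G(W_{\vartheta,[\pi]}, \pi') = \Hom_G(W_\vartheta, \pi') \neq 0$.

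For finite generation, the plan is to invoke local Langlands in families \cite{helm2018converse}: this yields a ring isomorphism $\End_G(W_\vartheta) \cong \mathcal{Z}_{\Lambda}(G(F))$, which after projecting to the block identifies $\End_G(W_{\vartheta,[\pi]})$ with the direct factor $\mathcal{Z}_{\Lambda}(G(F))_{[\pi]}$ of the Bernstein center. By the computation in Chapter \ref{Chapter GL_n}, this factor is the Noetherian ring $\overline{\mathbb{Z}}_\ell[t,t^{-1}] \otimes_{\overline{\mathbb{Z}}_\ell} \overline{\mathbb{Z}}_\ell[s]/(s^{\ell^k}-1)$. Combined with projectivity and generation, the standard Morita argument (cf.\ \cite[Lemma 22]{bernsteindraft} used earlier) produces an equivalence $\Hom_G(W_{\vartheta,[\pi]}, -) \colon \Rep_{\Lambda}(G(F))_{[\pi]} \xrightarrow{\sim} \Modr \End_G(W_{\vartheta,[\pi]})$ under which $W_{\vartheta,[\pi]}$ corresponds to the rank-one free module, and is in particular finitely generated over $G(F)$.

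The hardest step is expected to be the integral projectivity of $W_\vartheta$: while the complex-coefficient version is classical, with $\overline{\mathbb{Z}}_\ell$-coefficients one has to verify carefully that $V \mapsto \Hom_{U(F)}(\vartheta, V)$ is exact, i.e.\ that the idempotents coming from averaging $\vartheta$ over an exhaustive chain of pro-$p$ compact open subgroups of $U(F)$ interact well with exact sequences of smooth $\Lambda$-representations. This is available in the work of Helm and his collaborators; alternatively, one could bypass it entirely by working block-by-block, using the already-established equivalence of $\Rep_{\Lambda}(G(F))_{[\pi]}$ with modules over a Noetherian commutative ring, and directly matching $W_{\vartheta,[\pi]}$ with a projective generator on that side via the uniqueness of Whittaker models on irreducible supercuspidals.
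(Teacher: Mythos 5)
Your proposal follows the same three-part structure as the paper's sketch (projectivity, generation, finite generation), but uses noticeably different arguments for all three, and one of them has a real gap.

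The gap is in the projectivity argument. You write that $\Hom_G(W_\vartheta, V) \cong \Hom_{U(F)}(\vartheta, V|_{U(F)})$ by ``Frobenius reciprocity.'' The usual Frobenius reciprocity $\Hom_G(\cInd_H^G\sigma, V) \cong \Hom_H(\sigma, V|_H)$ for compact induction holds when $H$ is \emph{open}, and $U(F)$ is closed but not open, so this needs a dedicated argument. Moreover, the formula with $\Hom_{U(F)}(\vartheta, -)$ computes a $\vartheta$-\emph{isotypic invariants} space, which is an inverse limit $\varprojlim_n V^{K_n,\vartheta}$ over the exhausting chain of compact open $K_n \subseteq U(F)$, and inverse limits are not exact. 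The exact functor produced by averaging over pro-$p$ compact opens (with $\ell \neq p$) is the twisted \emph{coinvariants} $V \mapsto V_{U(F),\vartheta}$, a filtered colimit of exact functors. So either you need to first prove that the natural map $V_{U(F),\vartheta} \to V^{U(F),\vartheta}$ is an isomorphism in the relevant generality (which is a theorem, not automatic), or you should identify $\Hom_G(W_\vartheta, -)$ directly with the coinvariants functor. This is precisely the subtlety the paper defers to \cite[Section 4]{aizenbud2022strong}, together with the remark that the exactness of the Jacquet-type functor still holds over $\overline{\mathbb{Z}}_{\ell}$. You correctly identify this as the ``hardest step,'' but the argument you offer does not close it.

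Your generation and finite generation arguments are both genuinely different routes from the paper's. For generation, the paper points to the arguments of Bushnell--Henniart/Bushnell--Kutzko, while you reduce to supercuspidality via Theorem \ref{Thm SC Red} plus genericity of depth-zero supercuspidals (Gelfand--Kazhdan, Vign\'eras for the $\Lambda$-coefficient version). This works, though note you are implicitly using projectivity to pass from ``nonzero $\Hom(P,S)$ for each simple $S$'' to ``$P$ generates,'' so the ordering of steps matters. For finite generation, the paper counts irreducible subquotients with multiplicity one, whereas you invoke local Langlands in families to identify $\End_G(W_{\vartheta,[\pi]})$ with the Noetherian factor of the Bernstein center and then run a Morita argument. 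This is not circular, since \cite[Lemma 22]{bernsteindraft} is used elsewhere in the paper for an infinitely generated projective generator $\Pi$ and hence does not presuppose finite generation; under the resulting equivalence $W_{\vartheta,[\pi]}$ corresponds to the ring $\End_G(W_{\vartheta,[\pi]})$, which is a compact object, and compactness transports across the equivalence. Your alternative is cleaner for verification purposes, at the price of feeding LLIF in earlier than the paper does.
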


This lemma was proven in \cite{chan2019bernstein}. Alternatively, we sketch a proof as follows.

\begin{proof}
	For projectivity, see \cite[Section 4]{aizenbud2022strong}. Note their argument is with complex coefficients, but still goes through for $\overline{\mathbb{Z}}_{\ell}$-coefficients, because the Jacquet functor 
	$$r_{M, G}: \pi \mapsto \pi_U$$
	is still exact under the assumption that $p$ is invertible in $\overline{\mathbb{Z}}_{\ell}$ (see \cite[Section II.2.1]{vigneras1996representations}).
	
	For being a generator, in the $GL_2$ case, one can argue similarly as the $\overline{\mathbb{Q}}_{\ell}$-case in \cite[Section 39]{bushnell2006local}. (Note their definition of Whittaker representation is dual to our definition, as an induction instead of compact induction. But it still goes through by taking dual everywhere. See also, \cite[Section 2.1 and others]{bushnell2003generalized}.) See \cite{bushnell2003generalized} for the $GL_n$ case.
	
	For finite generation, it's enough to observe that $W_{\vartheta, [\pi]}$ has finitely many irreducible subquotients (by our explicit description of the block $\Rep_{\Lambda}(G(F))_{[\pi]})$ with multiplicity one (again, argue similarly as in \cite[Section 39]{bushnell2006local} for the multiplicity one property).
\end{proof}

\begin{lemma}\label{Lemma Spectral action Bern center}
	The spectral action is compatible with the map 
	$$\mathcal{O}(Z^1(W_F, \hat{G})_{\Lambda}/\hat{G}) \cong \mathcal{Z}_{\Lambda}(G(F)).$$
\end{lemma}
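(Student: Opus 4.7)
The plan is to reduce the identification of the two maps to their agreement on a faithful module (the Whittaker representation) and then to the known compatibility between the Fargues--Scholze and classical local Langlands parameters for $\GL_n$. Concretely, using Lemma \ref{Lemma Whittaker is proj gen} and Helm's theorem one has the chain of isomorphisms
$$\mathcal{O}(Z^1(W_F,\hat{G})_\Lambda/\hat{G}) \cong \mathcal{Z}_\Lambda(G(F)) \cong \End_G(W_\vartheta),$$
and since $W_\vartheta$ is $\overline{\mathbb{Z}}_\ell$-free the Bernstein-center action $\mathcal{Z}_\Lambda(G(F)) \hookrightarrow \End_G(W_\vartheta)$ is injective. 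So it suffices to verify that the two composite maps $\mathcal{O}(Z^1(W_F,\hat{G})_\Lambda/\hat{G}) \to \End_G(W_\vartheta)$ coincide: one being the Helm--Moss isomorphism, the other the spectral-action-on-$\mathcal{W}_\vartheta$ map factored through the Bernstein center. Both are $\overline{\mathbb{Z}}_\ell$-algebra homomorphisms landing in a torsion-free target, so equality may be checked after base change to $\overline{\mathbb{Q}}_\ell$.

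After base change, $\End_G(W_\vartheta) \otimes \overline{\mathbb{Q}}_\ell$ is a product of residue fields indexed by the irreducible smooth $\overline{\mathbb{Q}}_\ell$-subquotients $\pi$ of $W_\vartheta$, each appearing with multiplicity one (compare the proof of Lemma \ref{Lemma Whittaker is proj gen}). Equality thus reduces to checking that, for every such $\pi$ with classical $L$-parameter $\varphi_\pi$, both maps send $f \in \mathcal{O}(Z^1(W_F,\hat{G})_\Lambda/\hat{G})$ to the scalar $f(\varphi_\pi) \in \overline{\mathbb{Q}}_\ell$. The Helm--Moss map does so by its defining characterization. The spectral-action map sends $f$ to $f(\varphi_\pi^{FS})$, where $\varphi_\pi^{FS}$ is the Fargues--Scholze semisimple parameter of $\pi$ recovered from its action on the stalk of $\mathcal{W}_\vartheta$ at $\pi$ (cf.~\cite[Chapter IX]{fargues2021geometrization}).

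The main obstacle is therefore the identification $\varphi_\pi^{FS} = \varphi_\pi$ for the irreducible constituents of $W_\vartheta$, i.e.~the compatibility of the Fargues--Scholze parameter with classical local Langlands for $\GL_n$. This is known and is precisely the input we invoke from \cite{zou2022categorical}; granting it, the two composites agree on $\overline{\mathbb{Q}}_\ell$-points of the stack of parameters, hence integrally, which proves the lemma.
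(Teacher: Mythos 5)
The paper's own ``proof'' of this lemma is a one-line citation to \cite[Section 5]{zou2022categorical}; you have instead filled in a plausible outline of the argument that such a citation covers, so the two are compatible rather than conflicting, but yours is a genuinely substantive sketch where the paper black-boxes. Your reduction strategy — identify both composites with $\overline{\mathbb{Z}}_\ell$-algebra maps into $\End_G(W_\vartheta)$, observe the target is $\ell$-torsion-free, pass to $\overline{\mathbb{Q}}_\ell$, and then check agreement pointwise — is sound and is close in spirit to how such statements are established in the literature. It correctly isolates the two nontrivial inputs: (i) that $W_\vartheta$ is a faithfully projective generator (Lemma \ref{Lemma Whittaker is proj gen}, so the Bernstein-center map into $\End_G(W_\vartheta)$ is an isomorphism, not merely injective), and (ii) compatibility of the Fargues–Scholze parameter with the classical local Langlands correspondence for $\GL_n$, which is indeed known (\cite[Chapter IX]{fargues2021geometrization}).

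One small imprecision: $\End_G(W_\vartheta)\otimes\overline{\mathbb{Q}}_\ell$ is not a product of residue fields; for a supercuspidal block of $\GL_n$ it is the ring of functions on the Bernstein variety, a Laurent polynomial ring $\overline{\mathbb{Q}}_\ell[t,t^{-1}]$ rather than a product of copies of $\overline{\mathbb{Q}}_\ell$. The reduction to closed points you want is still valid, because the Bernstein centre over $\overline{\mathbb{Q}}_\ell$ is reduced and $\overline{\mathbb{Q}}_\ell$-points are Zariski-dense, but you should say that explicitly rather than describe the ring as semisimple. With that correction, and granting the Fargues–Scholze compatibility and the defining interpolation property of the Helm–Moss isomorphism, your argument goes through.
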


\begin{proof}
	See \cite[Section 5]{zou2022categorical}.
\end{proof}

Now we state the main result of this subsection.

By compatibility with $\pi_1(G)_{\Gamma}$-grading (see Proposition \ref{Prop Spectral action}), $\Theta$ restricts to a map between degree-$0$ parts of both sides
$$\Theta_0:=\Theta|_{\Perf(C_{\varphi})_{\chi=0}}: \Perf(C_{\varphi})_{\chi=0} \longrightarrow \mathcal{D}_{\lis}^{C_{\varphi}}(\Bun_G, \overline{\mathbb{Z}}_{\ell})^{\omega}_{b=0},$$
where $\Perf(C_{\varphi})_{\chi=0} \cong \Perf(\mathbb{G}_m \times \mu_{\ell^k})$ and 
$$\mathcal{D}_{\lis}^{C_{\varphi}}(\Bun_G, \overline{\mathbb{Z}}_{\ell})^{\omega}_{b=0} \cong \mathcal{D}(\Rep_{\overline{\mathbb{Z}}_{\ell}}(G(F))_{[\pi]})^{\omega}.$$

\begin{proposition}
	Under the above identifications, the functor
	$$\Theta_0: \Perf(\mathbb{G}_m \times \mu_{\ell^k}) \longrightarrow \mathcal{D}(\Rep_{\overline{\mathbb{Z}}_{\ell}}(G(F))_{[\pi]})^{\omega} \qquad A \mapsto A*W_{\vartheta, [\pi]}$$
	is an equivalence of derived categories.
\end{proposition}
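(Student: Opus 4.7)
The plan is to identify both sides of $\Theta_0$ with $\Perf(\mathcal{O}(\mathbb{G}_m \times \mu_{\ell^k}))$ via two different equivalences, and to check that under these identifications $\Theta_0$ is naturally isomorphic to the identity functor.

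First, I would set up the target via Morita theory. By Lemma \ref{Lemma Whittaker is proj gen}, $W_{\vartheta,[\pi]}$ is a finitely generated projective generator of $\Rep_{\Lambda}(G(F))_{[\pi]}$, so the functor $\Hom_G(W_{\vartheta,[\pi]},-)$ is an equivalence of abelian categories $\Rep_{\Lambda}(G(F))_{[\pi]} \cong \Modr\End_G(W_{\vartheta,[\pi]})$, and upgrades to an equivalence
$$\mathcal{D}(\Rep_{\Lambda}(G(F))_{[\pi]})^{\omega} \cong \Perf(\End_G(W_{\vartheta,[\pi]}))$$
on compact derived objects, sending $W_{\vartheta,[\pi]}$ to the free module of rank one.

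Next, I would identify this endomorphism ring. Restricting the local-Langlands-in-families isomorphism $\mathcal{O}(Z^1(W_F,\hat{G})_\Lambda/\hat{G}) \cong \mathcal{Z}_\Lambda(G(F)) \cong \End_G(W_\vartheta)$ to the idempotent cutting out the block $[\pi]$ and the trivial-central-character (degree-zero) piece gives
$$\End_G(W_{\vartheta,[\pi]}) \cong \mathcal{O}(C_\varphi)_{\chi=0} \cong \mathcal{O}(\mathbb{G}_m \times \mu_{\ell^k}),$$
which is commutative. Combined with the previous step, this yields an equivalence $\mathcal{D}(\Rep_{\Lambda}(G(F))_{[\pi]})^{\omega} \cong \Perf(\mathbb{G}_m \times \mu_{\ell^k})$ carrying $W_{\vartheta,[\pi]}$ to the structure sheaf.

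Finally, I would use Lemma \ref{Lemma Spectral action Bern center} to intertwine the two sides. Since $\mathbb{G}_m \times \mu_{\ell^k}$ is affine, any $A \in \Perf(\mathbb{G}_m \times \mu_{\ell^k})$ is (quasi-isomorphic to) a bounded complex of finitely generated projective $\mathcal{O}$-modules; because by Lemma \ref{Lemma Spectral action Bern center} the spectral action of $\mathcal{O}(C_\varphi)_{\chi=0}$ on $W_{\vartheta,[\pi]}$ factors through the Bernstein center and hence through the endomorphism ring $\End_G(W_{\vartheta,[\pi]}) \cong \mathcal{O}(\mathbb{G}_m \times \mu_{\ell^k})$, applying the spectral action of $A$ termwise realizes
$$A * W_{\vartheta,[\pi]} \simeq A \otimes^L_{\mathcal{O}(\mathbb{G}_m \times \mu_{\ell^k})} W_{\vartheta,[\pi]}.$$
Applying $\Hom_G(W_{\vartheta,[\pi]},-)$ gives $A \otimes^L_{\mathcal{O}} \mathcal{O} \simeq A$, so under the Steps 1--2 equivalence $\Theta_0$ is naturally isomorphic to $\mathrm{id}_{\Perf(\mathbb{G}_m \times \mu_{\ell^k})}$. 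The main obstacle is the displayed isomorphism above: it says that the spectral action of $\Perf(C_\varphi)_{\chi=0}$ on a fixed object is reconstructed from its action of functions, which should follow from $\mathcal{O}$-linearity of the spectral action together with affineness of $\mathbb{G}_m \times \mu_{\ell^k}$, but needs to be tracked carefully inside the Fargues--Scholze formalism, possibly by invoking the relevant compatibility result of \cite{zou2022categorical}.
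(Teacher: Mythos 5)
Your proposal is sound in outline and uses exactly the same two inputs as the paper (Lemma \ref{Lemma Whittaker is proj gen} and Lemma \ref{Lemma Spectral action Bern center}), but it assembles them along a different route and, as you note yourself, leaves the central step open. You identify both sides with $\Perf\bigl(\mathcal{O}(\mathbb{G}_m\times\mu_{\ell^k})\bigr)$ --- via Morita theory on the representation side and via affineness on the parameter side --- and then try to show that $\Theta_0$ becomes the identity. The step you flag as the ``main obstacle,'' namely $A * W_{\vartheta,[\pi]} \simeq A \otimes^L_{\mathcal{O}(\mathbb{G}_m\times\mu_{\ell^k})} W_{\vartheta,[\pi]}$ for \emph{all} $A \in \Perf$, is indeed a genuine gap: you need the full $\mathcal{O}$-linear monoidal compatibility of the Fargues--Scholze spectral action with the module structure coming from $\End_G(W_{\vartheta,[\pi]})$, and you do not prove it. Note also a small wrinkle you do not mention: $C_{\varphi}$ itself is \emph{not} affine (there is a $[{*}/\mathbb{G}_m]$ factor), so the ``affineness'' you invoke applies only after projecting to the $\chi=0$ piece, and one would need to check that the degree-zero part of the spectral action is really the $\mathcal{O}(\mathbb{G}_m\times\mu_{\ell^k})$-module action.

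The paper's proof avoids having to prove the displayed isomorphism for all $A$. It checks that $\Theta_0$ is fully faithful on the single generator $\mathcal{O}$: for $n \neq 0$ both $\Ext^n(\mathcal{O},\mathcal{O})$ and $\Ext^n(W_{\vartheta,[\pi]},W_{\vartheta,[\pi]})$ vanish because both objects are projective, and for $n = 0$ the induced map $\mathcal{O}(C_{\varphi})_{\chi=0} \to \End_G(W_{\vartheta,[\pi]})$ is an isomorphism directly by Lemma \ref{Lemma Spectral action Bern center} and local Langlands in families. An abstract d\'evissage for triangulated functors commuting with direct sums (Lemma \ref{Lemma Generator Triangulated Category}) then upgrades this single check to fully faithfulness, and essential surjectivity is a separate, short argument from $W_{\vartheta,[\pi]}$ being a projective generator. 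So the only spectral-action compatibility that the paper needs is for $A=\mathcal{O}$, which is trivial, plus the Bernstein-center statement. If you want to keep your Morita-style route, you should replace the general base-change isomorphism with the same reduction-to-the-generator device: your Steps 1--2 already accomplish more than is needed, and the Beilinson-type lemma closes the gap you flagged without requiring the stronger monoidality statement.
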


\begin{proof}
	Let's first prove that $\Theta_0$ is fully faithful. The key observation is that fully faithfulness can be checked on generators of the triangulated category $\Perf(C_{\varphi})_{\chi=0} \cong \Perf(\mathbb{G}_m \times \mu_{\ell^k})$ (see Lemma \ref{Lemma Generator Triangulated Category}). In our case, the structure sheaf $\mathcal{O}$ is a generator of $\Perf(\mathbb{G}_m \times \mu_{\ell^k})$, hence it suffices to check fully faithfulness on the structure sheaf. Recall this map sends the structure sheaf $\mathcal{O} \in \Perf(\mathbb{G}_m \times \mu_{\ell^k})$ to the restriction of the Whittaker representation $W_{\vartheta, [\pi]}$. So it suffices to show that the map between $\Hom$-sets in the derived category
	$$\Hom(\mathcal{O}, \mathcal{O}[n]) \to \Hom(W_{\vartheta, [\pi]}, W_{\vartheta, [\pi]}[n])$$
	is a bijection for all $n \in \mathbb{Z}$. The case $n \neq 0$ follows from the vanishing of higher $\Ext$ for projective objects ($\mathcal{O}$ and $W_{\vartheta, [\pi]}$). For $n=0$, $\Hom(\mathcal{O}, \mathcal{O}) \cong \mathcal{O}(C_{\varphi})$, and the above map fits into the following commutative diagram by Lemma \ref{Lemma Spectral action Bern center}, hence a bijection.
		\[\begin{tikzcd}
			{\mathcal{O}(Z^1(W_F, \hat{G})_{\Lambda}/\hat{G})} & {\End_{G}(W_{\vartheta})} \\
			{\mathcal{O}(C_{\varphi})} & {\End_{G}(W_{\vartheta, [\pi]})}
			\arrow[from=2-1, to=2-2]
			\arrow["\subseteq"{description}, sloped, draw=none, from=2-1, to=1-1]
			\arrow["\subseteq"{description}, sloped, draw=none, from=2-2, to=1-2]
			\arrow["\cong", from=1-1, to=1-2]
		\end{tikzcd}\]

	The essentially surjectivity follows from Lemma \ref{Lemma Whittaker is proj gen} that $W_{\vartheta, [\pi]}$ is a finitely generated projective generator of $\Rep_{\Lambda}(G(F))_{[\pi]}$.
	
\end{proof}

\begin{remark}
	We remark that to use Lemma \ref{Lemma Generator Triangulated Category} in the above proof, we need the fact that the spectral action commutes with direct sums. Indeed, it commutes with colimits. This boils down to the fact that the Hecke operators commute with colimits, as they are defined using pullback, tensor product, and shriek pushforward, all of which are left adjoints, hence commutes with colimits.
\end{remark}


\begin{lemma}\label{Lemma Generator Triangulated Category}
	Let $F: \mathcal{D}_1 \to \mathcal{D}_2$ be a triangulated functor between triangulated categories that commutes with direct sums, and let $E$ be a generator of $\mathcal{D}_1$. Assume that $F$ induces isomorphisms
	$$\Hom(E, E[n]) \cong \Hom(F(E), F(E[n]))$$
	for all $n \in \mathbb{Z}$, then $F$ is fully faithful.
\end{lemma}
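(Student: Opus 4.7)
The plan is to use a standard d\'evissage on triangulated subcategories, carried out in two symmetric passes. Fix an object $Y\in\mathcal{D}_1$ and let
\[
\mathcal{S}_Y \;:=\; \bigl\{X\in\mathcal{D}_1 \;\bigl|\; F \text{ induces an isomorphism } \Hom(X,Y[n])\xrightarrow{\sim}\Hom(F(X),F(Y)[n]) \text{ for all } n\in\mathbb{Z}\bigr\}.
\]
The first step is to check that $\mathcal{S}_Y$ is a triangulated subcategory of $\mathcal{D}_1$ that is closed under arbitrary direct sums. Stability under shifts is obvious. Given a distinguished triangle $X'\to X\to X''\to X'[1]$ in $\mathcal{D}_1$, apply $\Hom(-,Y[n])$ on both sides; since $F$ is triangulated, one obtains a ladder of long exact sequences, and the five-lemma gives the iso for the middle term whenever it holds for the two outer terms. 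Closure under direct sums follows because both $\Hom_{\mathcal{D}_1}(\bigoplus X_i,Y[n])=\prod\Hom(X_i,Y[n])$ and the analogous formula on the target hold, and $F(\bigoplus X_i)=\bigoplus F(X_i)$ by hypothesis.

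Next, specialize to $Y=E$: the assumption $\Hom(E,E[n])\cong\Hom(F(E),F(E)[n])$ says exactly $E\in\mathcal{S}_E$. Since $E$ generates $\mathcal{D}_1$ (as a triangulated category closed under direct sums), the smallest such subcategory containing $E$ is all of $\mathcal{D}_1$, and so $\mathcal{S}_E=\mathcal{D}_1$. Concretely, this means that for every $X\in\mathcal{D}_1$ and every $n\in\mathbb{Z}$, the map
\[
\Hom_{\mathcal{D}_1}(X,E[n])\;\longrightarrow\;\Hom_{\mathcal{D}_2}(F(X),F(E)[n])
\]
is an isomorphism.

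Now iterate symmetrically: for each fixed $X\in\mathcal{D}_1$ define
\[
\mathcal{T}_X \;:=\; \bigl\{Y\in\mathcal{D}_1 \;\bigl|\; \Hom(X,Y[n])\xrightarrow{\sim}\Hom(F(X),F(Y)[n]) \text{ for all } n\bigr\}.
\]
The same five-lemma argument shows $\mathcal{T}_X$ is triangulated, and closure under direct sums here uses the covariant Hom formula $\Hom(X,\bigoplus Y_i[n])$ — this is where one needs to know that $X$ itself is suitably small, or equivalently that $E$ is a \emph{compact} generator so that the d\'evissage goes through; this is the main (and essentially only) obstacle, and it is built into the notion of generator used in the paper (where $E=\mathcal{O}$ on $\mathbb{G}_m\times\mu_{\ell^k}$ is a compact generator of $\Perf$). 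Granted this, the previous step gives $E\in\mathcal{T}_X$, hence $\mathcal{T}_X=\mathcal{D}_1$. Taking $n=0$ yields fully faithfulness of $F$.
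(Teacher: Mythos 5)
Your proposal is structurally the same as the paper's proof: the paper invokes \cite[Tag 0ATH]{stacks-project} twice, first with the property $T_1(M)$ (``$\Hom(M,E[n])\xrightarrow{\sim}\Hom(F(M),F(E)[n])$ for all $n$'') and then with $T_2(N)$ (``$\Hom(M,N)\xrightarrow{\sim}\Hom(F(M),F(N))$ for all $M$''), which are exactly your $\mathcal{S}_E$ and $\mathcal{T}_X$. You are right to flag closure of $\mathcal{T}_X$ under direct sums as the delicate step, and you are more candid about it than the paper. But the fix you propose --- ``$E$ is a compact generator'' --- does not actually close the gap: for $\mathcal{T}_X$ to be closed under a direct sum $\bigoplus Y_i$ you need $\Hom(X,-)$ and $\Hom(F(X),F(-))$ to take $\bigoplus Y_i$ to $\bigoplus\Hom(X,Y_i)$, i.e.\ you need the \emph{arbitrary} object $X$ (and $F(X)$) to be compact, not merely $E$. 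Saying that $E$ is compact is necessary but not sufficient. The paper's own proof skates over the same point silently.

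The clean repair, which also matches the actual application, is to drop the ``commutes with direct sums'' hypothesis and read ``generator'' as a \emph{classical} (thick) generator: $E$ generates $\mathcal{D}_1$ under shifts, cones and \emph{retracts}. Then in both passes one only needs $\mathcal{S}_Y$ and $\mathcal{T}_X$ to be triangulated and closed under direct summands; closure under retracts follows from additivity of $F$ with no compactness assumption at all, and the five lemma handles cones exactly as you wrote. This is the version that fits the setting here, since $\mathcal{D}_1=\Perf(\mathbb{G}_m\times\mu_{\ell^k})$ has no infinite direct sums to speak of (it is a small, idempotent-complete triangulated category), and $\mathcal{O}$ is a classical generator of $\Perf$ of an affine scheme. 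Under that reading nothing about compactness or direct sums ever enters the dévissage.
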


\begin{proof}
	We use the general lemma \cite[Stack, Tag 0ATH]{stacks-project} twice. 
	
	To check that condition (1) and (3) in the general lemma holds, we use $F$ commutes with direct sums.
	
	To check that condition (2) in the general lemma holds, we use the five lemma.
	
	We first apply it with the property $T=T_1$: an object $M \in \mathcal{D}_1$ has the property $T_1$ (written $T_1(M)$) if $F$ induces isomorphisms
	$$\Hom(M, E[n])=\Hom(F(M), F(E[n]))$$
	for all $n \in \mathbb{Z}$. The assumption implies that condition (4) in the general lemma holds: $T_1(E[n])$ for all $n \in \mathbb{Z}$. Therefore, $T_1(M)$ for all $M \in \mathcal{D}_1$.
	
	We then apply it with the property $T=T_2$: an object $N \in \mathcal{D}_1$ has the property $T_2$ (written $T_2(M)$) if $F$ induces isomorphisms
	$$\Hom(M, N)=\Hom(F(M), F(N))$$
	for all $M \in \mathcal{D}_1$. By the last paragraph, $T_1(M)$ for all $M \in \mathcal{D}_1$, i.e., $T_2(E[n])$ for all $n \in \mathbb{Z}$. Therefore, $T_2(N)$ for all $N \in \mathcal{D}_1$. In other words, $F$ is fully faithful.
\end{proof}

\subsection{The full equivalence}	

Finally, we use the spectral action to get the full equivalence. Indeed, on the $L$-parameter side, for any character $\chi' \in X^*(\mathbb{G}_m)$, tensoring with $\mathcal{\mathcal{L}_{\chi'}}$ induces an equivalence
$$\mathcal{\mathcal{L}_{\chi'}} \otimes -: \Perf(C_{\varphi})_{\chi=0} \cong \Perf(C_{\varphi})_{\chi=\chi'}.$$
Similarly, on the $\Bun_G$ side, by Proposition \ref{Prop Spectral action}, spectral acting by $\mathcal{\mathcal{L}_{\chi'}}$ induces an equivalence
$$\mathcal{\mathcal{L}_{\chi'}}*-: \mathcal{D}_{\lis}^{C_{\varphi}}(\Bun_G, \overline{\mathbb{Z}}_{\ell})^{\omega}_{b=0} \cong \mathcal{D}_{\lis}^{C_{\varphi}}(\Bun_G, \overline{\mathbb{Z}}_{\ell})^{\omega}_{b=-\chi'}.$$ Therefore, we get the full equivalence via the spectral action.

\chapter{Conclusions and questions}\label{Chapter conclusion}


In this last chapter, let us make some concluding remarks and raise some further questions.

\section{How do Chapters \ref{Chapter MoLP}, \ref{Chapter Rep} help?}

First, let us reflect on how Chapters \ref{Chapter MoLP}, \ref{Chapter Rep} help to prove the categorical conjecture in Chapter \ref{Chapter CLLC}. It helps to write the $L$-parameter side as a $\mathbb{Z}$-grading of derived categories of modules over some ring (so that we can reduce to the degree-zero case and that we can check fully faithfulness on the generator). It does not help much thereafter if one accepts the local Langlands in families (LLIF)
$$\mathcal{O}(Z^1(W_F, \hat{G})_{\Lambda}/\hat{G}) \cong \mathcal{Z}_{\Lambda}(G(F)) \cong \End_{G}(W_{\vartheta}).$$
Indeed, the description of the representation side can be reproved using LLIF and that $W_{\vartheta, [\pi]}$ is a projective generator of $\Rep_{\Lambda}(G(F))_{[\pi]}$.

However, we note that (assuming the compatibility of Fargues-Scholze with the usual local Langlands correspondence for $GL_n$) it's possible to use Chapters \ref{Chapter MoLP}, \ref{Chapter Rep} to reprove the first isomorphism in LLIF. It boils down to the fact that if we have a morphism 
$$f: \mathbb{G}_m \times \mu \longrightarrow \mathbb{G}_m \times \mu$$
over $\overline{\mathbb{Z}}_{\ell}$, which becomes an isomorphism after base change to $\overline{\mathbb{Q}}_{\ell}$, then $f$ is an isomorphism over $\overline{\mathbb{Z}}_{\ell}$.

Moreover, assuming the result in Chapter $\ref{Chapter Rep}$, the second isomorphism in LLIF (when restricted to the block $\Rep_{\Lambda}(G(F))_{[\pi]}$) is almost equivalent to the statement that $W_{\vartheta, [\pi]}$ is a projective generator of $\Rep_{\Lambda}(G(F))_{[\pi]}$ (together with the fact that the irreducible representation of the block $\Rep_{\Lambda}(G(F))_{[\pi]}$ occur with multiplicity one in $W_{\vartheta, [\pi]}$). The latter can be proven almost by hand as in Lemma \ref{Lemma Whittaker is proj gen}.

\section{Relation to Bernstein's projective generator}

In \cite[p46, Section 3.3]{bernsteindraft}, Bernstein constructed a certain projective generator $$\cInd_{G^0}^{G(F)}(\rho|_{G^0})$$
of a supercuspidal block of $G(F)$ by inducing from $G^0$, the subgroup generated by compact subgroups (for representations with $\mathbb{C} \cong \overline{\mathbb{Q}}_{\ell}$ coefficients). It is interesting to understand the relation between the projective generators constructed in Chapter \ref{Chapter Rep} and Bernstein's projective generators.

\section{The categorical conjecture for general groups}

Since our results in Chapter \ref{Chapter MoLP}, \ref{Chapter Rep} also work for general reductive groups (other than $GL_n$), it is expected that they can be used to prove the categorical local Langlands conjectures for the depth-zero regular supercuspidal blocks of general reductive groups. In particular, the $\mu$ occurring in the result
of the $L$-parameter side (see Theorem \ref{Thm X/G}) should match with the block $\mathcal{A}_{x,1}$ (see Section \ref{Section rep application}) occurring on the representation side: we should have
$$\QCoh(\mu) \cong \mathcal{A}_{x,1}.$$
Indeed, $\mu=(T^{\Fr=(-)^q})^0$ is certain fixed points of a torus (see Theorem \ref{Thm X}), and $\mathcal{A}_{x,1}$ is also a block of some finite torus via Broué's equivalence \ref{Thm Broué}. These two finite tori should match (using the identification that $\QCoh(\mu_{n, \Lambda}) \cong \Rep_{\Lambda}(\mathbb{Z}/n\mathbb{Z})$).

One possible way to do this is via the (so far unknown in general) compatibility of Fargues-Scholze with classical local Langlands correspondences for depth-zero regular supercuspidal representations, say the work of DeBacker-Reeder \cite{debacker2009depth}. Then these two finite torus should be related by local Langlands for tori (see \cite[Section 4.3]{debacker2009depth}).

\bibliographystyle{alpha2}
\bibliography{reference}

\begin{thebibliography}{DHKM20}

\bibitem[ABS22]{aizenbud2022strong}
Avraham Aizenbud, Joseph Bernstein, and Eitan Sayag.
\newblock Strong density of spherical characters attached to unipotent
  subgroups.
\newblock {\em arXiv preprint arXiv:2202.04984}, 2022.

\bibitem[BCO14]{brochard2014autour}
Sylvain Brochard, Brian Conrad, and Joseph Oesterl{\'e}.
\newblock {\em Autour des sch{\'e}mas en groupes. {\'E}cole d'{\'E}t{\'e}
  ``{Sch{\'e}mas} en groupes''. {Volume} {I}}, volume 42-43 of {\em Panor.
  Synth.}
\newblock Paris: Soci{\'e}t{\'e} Math{\'e}matique de France (SMF), 2014.

\bibitem[Ber92]{bernsteindraft}
Joseph Bernstein.
\newblock Draft of: Representations of p-adic groups. {L}ectures at harvard
  university.
\newblock 1992.
\newblock url:
  \url{http://www.math.tau.ac.il/~bernstei/Unpublished_texts/unpublished_texts/Bernstein93new-harv.lect.from-chic.pdf}.

\bibitem[BH03]{bushnell2003generalized}
Colin~J. Bushnell and Guy Henniart.
\newblock Generalized {W}hittaker models and the bernstein center.
\newblock {\em American journal of mathematics}, 125(3):513--547, 2003.

\bibitem[BH06]{bushnell2006local}
Colin~J. Bushnell and Guy Henniart.
\newblock {\em The local {Langlands} conjecture for {{\(\text{GL}(2)\)}}.},
  volume 335 of {\em Grundlehren Math. Wiss.}
\newblock Berlin: Springer, 2006.

\bibitem[BM89]{michel1989bloc}
Michel Brou{\'e} and Jean Michel.
\newblock Blocs et s{\'e}ries de {Lusztig} dans un groupe r{\'e}ductif fini.
  ({Blocks} and {Lusztig} series in a finite reductive group).
\newblock {\em J. Reine Angew. Math.}, 395:56--67, 1989.

\bibitem[Bon11]{bonnafe2010representations}
C{\'e}dric Bonnaf{\'e}.
\newblock {\em Representations of {{\(\mathsf{SL}_2(\mathbb F_q)\)}}}.
\newblock London: Springer, 2011.

\bibitem[Bro90]{broue1990isometries}
Michel Brou{\'e}.
\newblock Isom{\'e}tries de caract{\`e}res et {\'e}quivalences de {Morita} ou
  d{\'e}riv{\'e}es. ({Isometries} of characters and {Morita} or derived
  equivalences).
\newblock {\em Publ. Math., Inst. Hautes {\'E}tud. Sci.}, 71:45--63, 1990.

\bibitem[Car85]{carter1985finite}
Roger~W. Carter.
\newblock Finite groups of {Lie} type. {Conjugacy} classes and complex
  characters.
\newblock Pure and {Applied} {Mathematics}. {A} {Wiley}-{Interscience}
  {Publication}. {Chichester}-{New} {York} etc.: {John} {Wiley} and {Sons}.
  {XII}, 544 p. {{\textsterling}} 42.50 (1985)., 1985.

\bibitem[Con14]{conrad2014reductive}
Brian Conrad.
\newblock Reductive group schemes.
\newblock In {\em Autour des sch\'emas en groupes. \'Ecole d'\'Et\'e
  ``Sch\'emas en groupes''. Volume I}, pages 93--444. Paris: Soci{\'e}t{\'e}
  Math{\'e}matique de France (SMF), 2014.

\bibitem[CS19]{chan2019bernstein}
Kei~Yuen Chan and Gordan Savin.
\newblock Bernstein-{Zelevinsky} derivatives: a {Hecke} algebra approach.
\newblock {\em Int. Math. Res. Not.}, 2019(3):731--760, 2019.

\bibitem[Dat09]{dat2009finitude}
Jean-Fran{\c{c}}ois Dat.
\newblock Finiteness of smooth representations of {{\(p\)}}-adic groups.
\newblock {\em J. Inst. Math. Jussieu}, 8(2):261--333, 2009.

\bibitem[Dat22]{dat2022ihes}
Jean-Fran{\c{c}}ois Dat.
\newblock Moduli spaces of local {L}anglands parameters.
\newblock 2022.
\newblock url:
  \url{https://webusers.imj-prg.fr/~jean-francois.dat/recherche/publis/ihes.pdf}.

\bibitem[DHKM20]{dhkm2020moduli}
Jean-Fran{\c{c}}ois Dat, David Helm, Robert Kurinczuk, and Gilbert Moss.
\newblock Moduli of {L}anglands parameters.
\newblock {\em arXiv preprint arXiv:2009.06708}, 2020.

\bibitem[DL76]{deligne1976representations}
Pierre Deligne and G.~Lusztig.
\newblock Representations of reductive groups over finite fields.
\newblock {\em Ann. Math. (2)}, 103:103--161, 1976.

\bibitem[DM20]{digne2020representations}
Fran{\c{c}}ois Digne and Jean Michel.
\newblock {\em Representations of finite groups of {Lie} type}, volume~95 of
  {\em Lond. Math. Soc. Stud. Texts}.
\newblock Cambridge: Cambridge University Press, 2nd edition edition, 2020.

\bibitem[DR09]{debacker2009depth}
Stephen DeBacker and Mark Reeder.
\newblock Depth-zero supercuspidal {{\(L\)}}-packets and their stability.
\newblock {\em Ann. Math. (2)}, 169(3):795--901, 2009.

\bibitem[FS21]{fargues2021geometrization}
Laurent Fargues and Peter Scholze.
\newblock Geometrization of the local {L}anglands correspondence.
\newblock {\em arXiv preprint arXiv:2102.13459}, 2021.

\bibitem[HM18]{helm2018converse}
David Helm and Gilbert Moss.
\newblock Converse theorems and the local {Langlands} correspondence in
  families.
\newblock {\em Invent. Math.}, 214(2):999--1022, 2018.

\bibitem[Jan03]{jantzen2003representation}
Jens~Carsten Jantzen.
\newblock {\em Representations of algebraic groups.}, volume 107 of {\em Math.
  Surv. Monogr.}
\newblock Providence, RI: American Mathematical Society (AMS), 2nd ed. edition,
  2003.

\bibitem[Mac80]{macdonald1980zeta}
I.~G. Macdonald.
\newblock Zeta functions attached to finite general linear groups.
\newblock {\em Math. Ann.}, 249:1--15, 1980.

\bibitem[Rab03]{rabinoff2003bruhat}
Joseph Rabinoff.
\newblock The {B}ruhat-{T}its building of a $p$-adic {C}hevalley group and an
  application to representation theory, 2003.
\newblock url:
  \url{https://citeseerx.ist.psu.edu/document?repid=rep1&type=pdf&doi=20a8858ebcb1f2c6c424f9699d241bfa1216302f}.

\bibitem[SP]{stacks-project}
The Stacks Project~Authors.
\newblock Stacks project.
\newblock url:
  \url{https://webusers.imj-prg.fr/~jean-francois.dat/recherche/publis/ihes.pdf}.

\bibitem[Vig96]{vigneras1996representations}
Marie-France Vign{\'e}ras.
\newblock {\em {{\(l\)}}-modular representations of a {{\(p\)}}-adic reductive
  group with {{\(l \neq p\)}}.}, volume 137 of {\em Prog. Math.}
\newblock Boston, MA: Birkh{\"a}user, 1996.

\bibitem[Zou22]{zou2022categorical}
Konrad Zou.
\newblock The categorical form of {F}argues' conjecture for tori.
\newblock {\em arXiv preprint arXiv:2202.13238}, 2022.

\end{thebibliography}

\end{document}